\def\smallddots{\mathinner{\raise7pt\hbox{.}\raise4pt\hbox{.}\raise1pt\hbox{.}}}
\def\smallsdots{\mathinner{\raise1pt\hbox{.}\raise4pt\hbox{.}\raise7pt\hbox{.}}}
\DeclareMathOperator{\diag}{diag}
\DeclareMathOperator{\rank}{rank}
\DeclareMathOperator{\nrank}{nrank}
\newtheorem{theorem}{Theorem}[section]
\numberwithin{equation}{section}
\numberwithin{table}{section}
\newtheorem{lemma}{Lemma}[section]
\newtheorem{corollary}{Corollary}[section]
\newtheorem{algorithm}{Algorithm}[section]
\newtheorem{example}{Example}[section]
\newtheorem{definition}{Definition}[section]
\newtheorem{remark}{Remark}[section]
\newtheorem{problem}{Problem}[section]
\begin{document}

\title{Fast Low-rank Approximation of a Matrix:  \\
Novel Insights, Novel Multipliers, and Extensions
\thanks {Some results of this paper have been  
presented at 
the Eleventh International Computer Science Symposium in Russia 
(CSR'2016),
in St. Petersbourg, Russia, June 2016}} 
\author{Victor Y. Pan} 

\author{Victor Y. Pan$^{[1, 2],[a]}$, Liang Zhao$^{[2],[b]}$, and
John Svadlenka$^{[2],[c]}$
\\
\and\\
$^{[1]}$ Department of Mathematics and Computer Science \\
Lehman College of the City University of New York \\
Bronx, NY 10468 USA \\
$^{[2]}$ Ph.D. Programs in Mathematics  and Computer Science \\
The Graduate Center of the City University of New York \\
New York, NY 10036 USA \\
$^{[a]}$ victor.pan@lehman.cuny.edu \\
http://comet.lehman.cuny.edu/vpan/  \\
$^{[b]}$  lzhao1@gc.cuny.edu \\
$^{[c]}$ jsvadlenka@gradcenter.cuny.edu 
} 
\date{}

\maketitle


\begin{abstract} 

\begin{itemize}
  \item
Low-rank approximation of a matrix by means of random sampling has been consistently efficient in its empirical studies by many scientists who applied it 
with various sparse and structured multipliers, but adequate formal support for this empirical phenomenon has been missing  so far.
\item
Our novel insight into the subject leads to such an elusive formal support and
 promises significant acceleration of the known algorithms
for some fundamental problems of matrix computations
and data mining and analysis. 
\item
Our formal results and our numerical tests
 are in good accordance with each other.
\item
We also outline extensions of low-rank approximation
algorithms and of our progress to the Least Squares Regression, the Fast Multipole Method,
and the Conjugate Gradient algorithms.
\end{itemize}
\end{abstract}

\paragraph{\bf Key Words:}
Low-rank approximation of a matrix,
Random sampling, Derandomization, Least Squares Regression,
Fast Multipole Method, Conjugate Gradient algorithms.

\paragraph{\bf 2000 Math. Subject Classification:}
15A52, 68W20,  65F30, 65F20


\section{Introduction}\label{sintr}


\subsection{The problem of low-rank approximation and our progress briefly}\label{sprbpr}


{\em Low-rank approximation of a 
matrix by means of random sampling}
is an increasingly popular subject area 
with applications to
the most
fundamental matrix computations  \cite{HMT11} as well as
 numerous problems of data mining and analysis,
``ranging from term document data to DNA SNP data" \cite{M11}.
See  \cite{HMT11}, \cite{M11}, and
\cite[Section 10.4.5]{GL13},
for surveys and ample bibliography.


All these studies rely on the proven efficiency of random sampling with Gaussian multipliers
and mostly empirical evidence that the algorithms work as efficiently with various random sparse and structured multipliers, although  
adequate formal support for this empirical evidence has been missing so far. 

Our new insight enables such an elusive 
formal support
as well as the acceleration of low-rank approximation and some other  fundamental
 matrix computations.
In this section we outline our main results by using the  definitions
below and in the Appendix. 


\subsection{Some definitions}\label{ssdef}


\begin{itemize}
  \item
Typically we use the concepts ``large", ``small", ``near", ``close", ``approximate", 
``ill-conditioned" and ``well-conditioned"  
quantified in the context, but we specify them quantitatively if needed. 
\item
Hereafter 
``$\ll$" means ``much less than" and
{\em ``flop"} stands for ``floating point arithmetic operation".
\item
$I_s$ is the $s\times s$ identity matrix.  $O_{k,l}$ is a $k\times l$
matrix filled with zeros. ${\bf o}$ is a vector filled with zeros.
\item
 $(B_1~|~\dots~|~B_h)$ 
denotes a $1\times h$ block matrix with the blocks $B_1,\dots,B_h$.
\item
$\diag(B_1,\dots,B_h)$
denotes a $h\times h$ block diagonal matrix with  
diagonal  blocks $B_1,\dots,B_h$.
\item
 $\rank(M)$, $\nrank(M)$, and $||M||$ denote the  {\em rank}, 
 {\em numerical rank}, and the {\em spectral norm} of a matrix $M$, respectively. 
[$\nrank (M)=r$ if and only if a matrix $M-E$ is well-conditioned and 
has rank $r$, for a perturbation matrix $E$
of a small norm. A matrix is ill-conditioned if and only if its rank exceeds 
its numerical rank or equivalently if and only if its small-norm perturbation
can decrease its rank.]
\item
$M^T$, $M^H$, and $\mathcal R(M)$ denote its   
 transpose, Hermitian  transpose, and range (column span), respectively.
\item
An $m\times n$ matrix $M$ is called  {\em unitary}  
if $M^HM=I_n$ or if  $MM^H=I_m$. If this matrix is known to be real, then it is also
and preferably  called {\em orthogonal}. 
 \item
{\em ``Likely"}  
means ``with a probability close to 1", 
the acronym ``i.i.d." stands
for
``independent identically distributed",
and we
refer to standard Gaussian random variables 
  just as {\em Gaussian}. 
\item
We call an $m\times n$ matrix {\em  Gaussian} and denote it $G_{m,n}$ 
if all its entries are {\em i.i.d.} Gaussian variables. 
\item
$\mathcal G^{m\times n}$, $\mathbb R^{m\times n}$, and $\mathbb C^{m\times n}$
 denote the classes of $m\times n$ Gaussian, real and complex matrices, respectively.
\item
$\mathcal G_{m,n,r}$, $\mathbb R_{m,n,r}$, and $\mathbb C_{m,n,r}$,
for $1\le r\le \min \{m,n\}$,
 denote the classes of $m\times n$ matrices $UV$
of rank at most $r$
where the matrices $U$ of size $m\times r$ and 
$V$  of size $r\times n$ are Gaussian, real and complex, respectively.
\item
If  $M=UV\in  \mathcal G_{m,n,r}$, then
the matrices $U$, $V$ and $M$  have rank $r$ 
with probability 1
 by virtue of Theorem \ref{thrnd}, and such a matrix 
$M$
is said to be an
 $m\times n$ {\rm factor-Gaussian}
matrix of expected rank $r$. 
 \end{itemize}
   

\subsection{The basic algorithm}\label{sbsalg}


Recall that a
 matrix $M$ can be represented (respectively, approximated) 
by a product $UV$ of two matrices $U\in \mathbb C^{m\times r}$ 
and $V\in \mathbb C^{r\times n}$ if and only if $r\ge \rank(M)$ 
(respectively, $r\ge \nrank(M)$). The following celebrated 
algorithm computes such 
a representation or approximation:

\begin{algorithm}\label{alg1} {\rm \cite[Algorithm 4.1]{HMT11}.}
{\rm  Low-rank representation/approximation of a matrix.}


\begin{description}


\item[{\sc Input:}] 
An $m\times n$ matrix  $M$, a nonnegative tolerance $\tau$, and an integer  $r$
such that $0<r\ll \min\{m,n\}$.




\item[{\sc Initialization:}] 
 Fix an 
 integer $p$ such that $0\le p\ll n-r$. Compute $l=r+p$.
Generate an $n\times l$  matrix $B$.


\item[{\sc Computations:}]

\begin{enumerate}
\item 
Compute the  $m\times l$ matrix $MB$.
\item 
By orthogonalizing its columns, compute and output
the $m\times l$ 
matrix $Q=Q(MB)$ (cf. \cite[Theorem 5.2.3]{GL13}).
\item 
Compute and output the $l\times n$  matrix $Q^TM$
(together with the matrix $Q$ it represents 
the $m\times n$
 matrix $\tilde M=QQ^TM$).
\item 
Compute and output the spectral error norm 
$\Delta=||\tilde M-M||$.\footnote{Frievalds' test of \cite{F77}  enables
probabilistically estimation of the norm $\Delta$ 
at a cost of performing a bounded number
of multiplications of the matrix $\tilde M-M$ by random vectors.}
If $\Delta\le \tau$, output SUCCESS. Otherwise
output  FAILURE.
\end{enumerate}


\end{description}


\end{algorithm}

Hereafter we focus on Stage 1,
involving $(2n-1)ml$ flops for 
generic matrices $M$ and $B$. 
The range
of its output matrix $MB$ approximates the left  leading
singular space $\mathcal S_r$  of the input matrix $M$ associated with its 
$r$  largest singular values, and
\cite{HMT11} call this stage  ``Range finder".
Within the same asymptotic cost bound,
 \cite[Section 5]{HMT11} extends it to the approximation
of the SVD of the matrix $M$,
producing its low-rank approximation, and 
to some  other fundamental
factorizations of $M$.

It remains to specify the choice of a multiplier $B$
in order to complete the description of the algorithm.
 

\subsection{The choice of multipliers: basic observations}\label{smlbsc}


We readily prove the following  result (see Section \ref{sprth1}):

\begin{theorem}\label{thall}
Given an $m\times n$ matrix $M$ with $\nrank(M)=r$
 and a reasonably small positive tolerance  
$\tau$,  
Algorithm \ref{alg1} outputs SUCCESS
if and only if $\nrank(MB)
=r$. 
\end{theorem}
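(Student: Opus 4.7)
\medskip\noindent\textbf{Proof proposal.}
The plan is to reformulate the SUCCESS condition in terms of how closely the column space $\mathcal{R}(MB)$ tracks the leading left singular subspace of $M$. Write $M = M_r + E$ with $\rank(M_r) = r$ and $\|E\|$ small; this decomposition is available because $\nrank(M) = r$. Let $M_r = U_r\Sigma_r V_r^H$ be a thin SVD, so $\mathcal{R}(M_r) = \mathcal{R}(U_r)$ is of dimension $r$. The columns of $Q$ form an orthonormal basis of $\mathcal{R}(MB)$, the projector $QQ^T$ has range $\mathcal{R}(MB)$, and $\tilde M - M = -(I - QQ^T)M$; thus SUCCESS is exactly the statement that the orthogonal projection of $M$ onto $\mathcal{R}(MB)^\perp$ has spectral norm at most $\tau$. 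An immediate observation is that $\nrank(MB) \le r$ always: $MB = M_r B + EB$, $\rank(M_r B) \le r$, and $\|EB\| \le \|E\|\,\|B\|$ is small, so $MB$ sits within a small perturbation of a rank-$r$ matrix.

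For the ``if'' direction I would assume $\nrank(MB) = r$, which forces $M_r B$ itself to have rank exactly $r$ and its $r$th singular value to be bounded well above $\|EB\|$. With $M_r B$ of rank $r$, its column space coincides with $\mathcal{R}(U_r)$. I would then invoke a standard subspace perturbation bound (Wedin's $\sin\Theta$ inequality applied to the pair $M_r B$ and $MB$) to conclude that the top-$r$ left singular subspace of $MB$ makes a small principal angle with $\mathcal{R}(U_r)$, and hence so does $\mathcal{R}(MB)$, which contains it. Splitting $\|(I - QQ^T)M\| \le \|(I - QQ^T)M_r\| + \|(I - QQ^T)E\|$, the first summand is small by the subspace bound and the second is at most $\|E\|$, yielding $\|\tilde M - M\| \le \tau$ as required.

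For the converse I would argue contrapositively. If $\nrank(MB) < r$ then $MB$ is within a small perturbation of a matrix of rank at most $r - 1$, which together with $\|EB\|$ small forces $M_r B$ itself to be (numerically) of rank at most $r - 1$. Since $U_r$ has orthonormal columns, this pushes $V_r^H B$ to be rank-deficient, so $\mathcal{R}(M_r B)$ is a proper subspace of $\mathcal{R}(U_r)$ separated from some unit vector $u \in \mathcal{R}(U_r)$ by a gap comparable to $\sigma_r$. The remaining directions in $\mathcal{R}(MB)$ come from $EB$ and lie essentially in the trailing singular subspace of $M$, so they cannot fill in the missing component in $\mathcal{R}(U_r)$. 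Consequently $\|(I - QQ^T)M_r\|$ is at least on the order of $\sigma_r$, which under $\nrank(M) = r$ is much larger than $\tau$, so $\|\tilde M - M\| > \tau$ and Algorithm \ref{alg1} outputs FAILURE.

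The main obstacle I expect is not conceptual but quantitative: the threshold hidden in ``reasonably small'' $\tau$, the condition-number threshold built into the definition of $\nrank(\cdot)$, and the sizes of $\|E\|$, $\|EB\|$, and $\sigma_r$ have to be chosen consistently so that both implications go through with the same numerical constants. Once these are aligned, the proof reduces to the two projector/perturbation estimates above, which is presumably why the paper asserts the result is ``readily'' proved.
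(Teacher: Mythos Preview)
Your proposal is correct and shares the paper's underlying decomposition $M = M_r + E$. For the ``if'' direction you and the paper do essentially the same thing: the paper packages the projector perturbation into its Theorem~\ref{thrrap} and bound~(\ref{eqdlt}) (resting on an estimate for $\|Q(MB)Q(MB)^T - Q(M_rB)Q(M_rB)^T\|$ in terms of $\|(M_rB)^+\|\,\|EB\|_F$), which plays exactly the role of your appeal to Wedin's $\sin\Theta$ theorem. For the ``only if'' direction, however, the paper takes a shorter route than yours: rather than locating a missed direction $u\in\mathcal R(U_r)$, it simply observes that $\rank(\tilde M)\le\rank(Q)=\rank(MB)$, so if (numerically) $\rank(MB)=r_-<r$ then $\tilde M$ is, up to a small perturbation, a matrix of rank~$r_-$, and Eckart--Young immediately gives $\Delta$ at least of order $\sigma_{r_-+1}(M)\ge\sigma_r(M)$, which is large since $\nrank(M)=r$. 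Your subspace argument reaches the same conclusion and has the merit of making explicit \emph{why} the projection fails (the column space of $M_rB$ misses part of $\mathcal R(U_r)$, and $EB$ cannot supply it because $U_r^HE=0$), but it requires more bookkeeping; the paper's rank-based argument buys brevity at the cost of that geometric picture.
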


Hence, for a given matrix $M$ and a proper choice of an $n\times l$ multiplier $B$,
 the algorithm solves the  {\em fixed rank problem} 
where the integer $r=\nrank(M)$ is  given
(we could have estimated this integer  by means of
binary search based on recursive application of the algorithm).

\begin{definition}\label{defbd}
For two integers $l$ and $n$, $0<l\le n$,
and any fixed 
 $n\times l$ multiplier $B$,
partition the set
 of  $m\times n$ matrices $M$
with $\nrank (M)=r$
into the sets $\mathcal M_B=\mathcal M_{B,{\rm good}}$ and
$\mathcal M_{B,{\rm bad}}$
of ``$B$-good" and ``$B$-bad" 
matrices such that $\nrank (MB)=r$
and $\nrank (MB)<r$, respectively.
\end{definition}

The following simple observations should be instructive. 

\begin{theorem}\label{thbd} (Cf. Remark \ref{rewcd}.)
Consider a vector ${\bf v}$ of dimension $n$ and 
 unitary matrices $Q$ of size
$n\times n$, $Q'$ of size $l\times l$,  and
 $B$  of size $n\times l$, so that both matrices 
$QB$ and $BQ'$ have size $l\times l$  and
 are unitary. Then 

(i) $\mathcal M_{B'}=\mathcal M_B$,

(ii) $\mathcal M_{B''}=(\mathcal M_B)Q$,

(iii) $\mathcal M_{B}\subseteq \mathcal M_{(B~|~{\bf v})}$, and

(iv) $\mathcal M_{B}$ is the set of all $m\times n$ matrices 
having numerical rank $r$ (that  is, the set $\mathcal M_{B,{\rm bad}}$
of $B$-bad matrices is
empty) if and only if $l\ge n$.
\end{theorem}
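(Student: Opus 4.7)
My plan is to reduce all four parts to a single underlying observation: multiplication by a unitary matrix preserves every singular value, and hence both the rank and the numerical rank. With that, each of (i)--(iv) becomes a short bookkeeping argument on top of the definitions and Theorem \ref{thall}.

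For parts (i) and (ii), I would first fix the interpretation left slightly ambiguous by the paper's conflicting sizes: namely $B' = BQ'$ (the right-acting $l\times l$ unitary) and $B'' = QB$ (the left-acting $n\times n$ unitary). Part (i) then follows from $MB' = (MB)Q'$, which shares all singular values with $MB$, giving $\nrank(MB')=\nrank(MB)$ and therefore $\mathcal M_{B'}=\mathcal M_B$. Part (ii) follows by writing $MB'' = (MQ)B$ and substituting $N = MQ$: since $Q$ is unitary, $\nrank(N)=\nrank(M)=r$, and $M\in \mathcal M_{B''}$ iff $N\in\mathcal M_B$, which exhibits $\mathcal M_{B''}$ as the right-translate of $\mathcal M_B$ by the appropriate power of $Q$, as (ii) asserts.

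For part (iii), I would use monotonicity of the numerical rank in each argument of the product $MX$. Appending the column $\mathbf v$ to $B$ can only enlarge the range, so $\nrank(M(B\mid \mathbf v))\ge \nrank(MB) = r$; and multiplying $M$ on the right by a bounded-norm matrix cannot increase its numerical rank, so $\nrank(M(B\mid \mathbf v))\le \nrank(M) = r$. The two bounds pinch and give $\nrank(M(B\mid \mathbf v))=r$, hence $M\in\mathcal M_{(B\mid \mathbf v)}$.

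The main obstacle will be part (iv), and in particular its forward direction. The converse is easy: for $l\ge n$ I would exhibit $B = (I_n\mid O_{n,l-n})$ as a witness, for which $MB=(M\mid O_{m,l-n})$ and $\nrank(MB)=\nrank(M)=r$ for every admissible $M$. The forward direction requires showing that whenever $l<n$, \emph{every} $n\times l$ matrix $B$ admits some bad $M$, which is the one nontrivial ingredient. My approach: since $l<n$, the left null space of $B$ has dimension at least $n-l\ge 1$, so I can pick a unit $\mathbf w$ with $\mathbf w^H B = {\bf o}$ and construct a rank-$r$ matrix $M = U_M\Sigma_M V_M^H$ whose right singular-vector matrix $V_M$ has $\mathbf w$ as one of its $r$ columns (possible whenever $r\ge 1$ by extending $\mathbf w$ to any orthonormal $r$-frame in $\mathbb C^n$). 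Then the corresponding row of $V_M^H B$ vanishes, forcing $\rank(MB)\le r-1$, hence $\nrank(MB)\le r-1<r$, and $M\in\mathcal M_{B,\text{bad}}$.
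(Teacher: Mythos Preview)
The paper does not actually supply a proof of this theorem: it is introduced as ``the following simple observations should be instructive'' and left unproved, so there is no paper argument to compare against. Your proposal is a reasonable and essentially correct way of filling in those details; the key mechanism---that right- or left-multiplication by a unitary matrix preserves all singular values and hence the numerical rank---is exactly what makes (i), (ii), and the easy direction of (iv) immediate, and your explicit construction of a bad $M$ via a vector $\mathbf w$ in the orthogonal complement of $\mathcal R(B)$ is the right idea for the nontrivial direction of (iv).

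One small gap to flag in the ``if'' direction of (iv): the theorem fixes a unitary $B\in\mathbb C^{n\times l}$ in its hypothesis, so for $l\ge n$ you must show that \emph{this} $B$ has $\mathcal M_{B,\mathrm{bad}}=\emptyset$, not merely exhibit one good witness such as $(I_n\mid O_{n,l-n})$. The repair is immediate: any unitary $n\times l$ matrix with $l\ge n$ satisfies $BB^H=I_n$, hence $(MB)(MB)^H=MM^H$, so $MB$ and $M$ share all nonzero singular values and $\nrank(MB)=\nrank(M)=r$ for every admissible $M$. (Alternatively, you could invoke your parts (i) and (ii) to reduce the general unitary $B$ to your special witness via its SVD, but the direct one-line argument is cleaner.) With that adjustment your proof is complete.
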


The theorem implies that  
the class $\mathcal M_B$ of $B$-good matrices 
is invariant in the map $B\rightarrow BQ'$ (by virtue of part (i)),
is invariant 
(up to its orthogonal transformation)  
 in the map $B\rightarrow QB$
(by virtue of part (ii)), 
can only grow if a column vector is appended to 
a multiplier $B$ (by virtue of part (iii)), and 
fills the whole space $\mathbb C_{m,n,r}$
or  $\mathbb R_{m,n,r}$ if $l=n$
 (by virtue of part (iv)).


\subsection{\bf A recursive algorithm}\label{strrs}


Theorem \ref{thbd} motivates the design of the following algorithm.

\begin{algorithm}\label{alg11} 
{\rm  Recursive low-rank representation/approximation of a matrix.}


\begin{description}


\item[{\sc Input:}] 
An $m\times n$ matrix  $M$ and a nonnegative tolerance $\tau$.




\item[{\sc Computations:}]

\begin{enumerate}
\item 
Generate an $n\times n$  unitary matrix $B$.
\item 
Fix positive integers $l_1,\dots,l_h$
such that $l_1+\cdots+l_h=n$
and represent the matrix $B$ as 
a block vector $B=(B_1~|~\dots~|~B_h)$
where the block $B_i$ has size $n\times l_i$ for $i=1,\dots,h$.
\item 
Recursively apply Algorithm \ref{alg1}
to the matrix $M$ by
substituting $l$ for $l^{(i)}=\sum_{j=1}^il_j$ 
and $B$ for $B^{(i)}=(B_1~|~\dots~|~B_i)$,
$i=1,\dots,h$. Stop when 
the algorithm outputs SUCCESS.
\end{enumerate}


\end{description}


\end{algorithm}

Correctness of the algorithm follows from part (iv)
of Theorem \ref{thbd}. 


\begin{remark}\label{rewcd}
We can extend both Theorem \ref{thbd} and Algorithm \ref{alg11} 
to the case where we 
apply them to a nonsingular and well-conditioned
 (rather than unitary)
 $n\times n$
matrix $B$.
In that case all blocks $B_j$ of the multipliers $B^{(i)}$ are also well-conditioned
matrices of full rank, 
and moreover $\kappa(B_j)\le \kappa(B)$ for all $j$
(cf. \cite[Corollary 8.6.3]{GL13}).
\end{remark}  

At every recursive stage of the algorithm we can reuse the
results of the previous stages (see Section \ref{smngflr}).
If the algorithm outputs SUCCESS at Stage $l^{(h)}$, 
then its overall computational cost is bounded by that of Algorithm  \ref{alg1}
for the same input and for $l=l^{(h)}$. The bound decreases as  
 the output dimension $l^{(h)}$ decreases, which
motivates our next goal 
of the 
{\em compression of the output multipliers}, that is, of yielding
success for a smaller dimension $l^{(h)}$.


\subsection{\bf Compression of output multipliers 
by means of 
Gaussian and structured randomization}\label{scmpgrs}


The following theorem implies that
 Algorithm  \ref{alg11} is likely
to output SUCCESS at Stage $h$ for the smallest $h$ such that
$l^{(h)}\ge r$ if $B$ is a Gaussian matrix.


\begin{theorem}\label{th0}  
Let  Algorithm \ref{alg1} be applied with a Gaussian multiplier $B=G_{n,l}$. Then 

(i)  $\tilde M=M$ with probability 1
if $l=r=\rank (M)$ 
(cf. Theorem \ref{thlrnk})
and 

(ii) it is likely that $\tilde M\approx M$
if $\nrank (M)=r\le l-4$ (cf. Theorem \ref{th1}).
\end{theorem}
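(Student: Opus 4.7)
The plan is to reduce both parts to classical facts about rectangular Gaussian matrices by exploiting the SVD of $M$ together with the orthogonal invariance of the Gaussian law. Write a thin SVD $M = U\Sigma V^H$ with $V \in \mathbb{C}^{n \times \rho}$ having orthonormal columns, for a suitable $\rho$. Since $B = G_{n,l}$ is Gaussian and $V^H$ has orthonormal rows, $V^H B$ is itself a $\rho \times l$ Gaussian matrix, so all questions about $MB = U\Sigma(V^H B)$ reduce to questions about such Gaussian factors.

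\emph{Part (i).} Take $\rho = r = \rank(M)$. With $l = r$ the matrix $V^H B$ is square $r \times r$ Gaussian; its determinant is a nonzero polynomial in the Gaussian entries, hence nonzero a.s., and the cited Theorem \ref{thrnd} likewise gives full rank with probability $1$. Therefore $MB = U\Sigma(V^H B)$ has the same column span as $U$, which is $\mathcal R(M)$. Stage 2 of Algorithm \ref{alg1} then returns a $Q$ whose columns are an orthonormal basis of $\mathcal R(M)$, so $QQ^H$ is the orthogonal projector onto $\mathcal R(M)$ and $\tilde M = QQ^H M = M$ a.s.

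\emph{Part (ii).} I would follow the standard two-block decomposition from \cite{HMT11}. Split the full SVD as $M = U_1 \Sigma_1 V_1^H + U_2 \Sigma_2 V_2^H$, where the first summand carries the $r$ dominant singular values (with $\Sigma_1$ well conditioned, by the definition of numerical rank) and $\Sigma_2$ has small spectral norm $\varepsilon$. By orthogonal invariance, $G_1 := V_1^H B$ and $G_2 := V_2^H B$ are independent Gaussian matrices of sizes $r \times l$ and $(n-r) \times l$. An inequality of the shape $\|M - QQ^H M\| \le \varepsilon\,(1 + \|G_2 G_1^{+}\|)$ is obtained by expressing the projector $QQ^H$ through the column span of $MB$; this is the content packaged by the quoted Theorem \ref{th1}.

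The main obstacle is the probabilistic control of $\|G_1^{+}\|$, i.e.\ of the inverse smallest singular value of the oversampled $r \times l$ Gaussian $G_1$. For $l \in \{r, r+1\}$ the relevant moments of $\sigma_{\min}(G_1)^{-1}$ diverge, while for $l \ge r+2$ they are finite, with clean bounds of the form $\mathbb E\,\|G_1^{+}\| \lesssim \sqrt{r/(l-r-1)}$ available in \cite{HMT11}. Combined with the independent tail estimate $\mathbb E\,\|G_2\| \le \sqrt{n-r} + \sqrt{l}$, the oversampling hypothesis $l \ge r + 4$ yields a modest constant in the bound above and hence $\tilde M \approx M$ with high probability. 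Once this Gaussian concentration is in hand, the rest is routine bookkeeping through the SVD and orthogonal invariance.
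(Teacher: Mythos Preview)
Your Part (i) is essentially the paper's argument: the paper's Theorem \ref{thlrnk} shows $\mathcal R(MB)=\mathcal R(M)$ with probability $1$ when $B$ is Gaussian (via Theorem \ref{thrnd}), and then $Q(MB)Q(MB)^TM=M$; you simply phrase the same reasoning through the compact SVD and the orthogonal invariance lemma (Lemma \ref{lepr3}).

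Your Part (ii), however, follows a genuinely different route. You invoke the \cite{HMT11} deterministic bound (their Theorem 9.1), splitting the full SVD into a top $r$-block and a tail, and controlling $\|M-QQ^HM\|$ by $\varepsilon\,(1+\|G_2G_1^+\|)$ with $G_1=V_1^HB$, $G_2=V_2^HB$ independent Gaussians; the probabilistic work then reduces to tail and moment bounds on $\|G_2\|$ and $\|G_1^+\|$. The paper instead goes through Theorem \ref{thrrap}, which compares the projectors $Q(MB)Q(MB)^T$ and $Q(M_rB)Q(M_rB)^T$ via a perturbation estimate from \cite[Corollary C.1]{PQY15}, and then bounds $\|(M_rB)^+\|\le \nu_{r,l}^+/\sigma_r(M)$ (Theorem \ref{thprm}). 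The paper even flags this distinction in Remark \ref{repfr}. What you gain: your bound is independent of $\sigma_r(M)$, matching \cite{HMT11} exactly, whereas the paper's estimate carries a factor $1/\sigma_r(M)$ and an extra $\|B\|_F$; what the paper gains is a shorter, self-contained derivation that also transfers cleanly to the dual setting of Theorem \ref{th1d}. Both arguments are correct.
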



We produce an $n\times l$ Gaussian matrix $B=G_{n,l}$ by generating its $nl$ random entries,
and we pre-multiply it by a dense $m\times n$ matrix $M$ by using $ml(2n-1)$ flops.
(They dominate $O(ml^2)$ flops for the orthogonalization at Stage 2
if $l\ll n$.) 
We produce an $n\times l$ matrix $B$ of {\em subsample random Fourier or 
Hadamard transform}\footnote{Hereafter we use the acronyms {\em SRFT}  and {\em SRHT}.}  
by generating $n+l$ parameters, which  define this matrix, and we   
pre-multiply it by an $m\times n$ matrix $M$ by using $O(mn\log(l))$ flops, 
for $l$ of order $r\log (r)$
(see \cite[Sections 4.6 and 11]{HMT11},
 \cite[Section 3.1]{M11}, and  \cite{T11}).

SRFT and SRHT multipliers $B$ are {\em universal}, like Gaussian ones: 
 Algorithm \ref{alg1} applied with such a multiplier is likely to 
approximate 
closely 
a matrix $M$ having numerical rank at most $r$, 
although the transition from  Gaussian to 
SRFT and SRHT  multipliers  increases
 the estimated failure probability
 from $3\exp(-p)$, for $p\ge 4$,  to $O(1/l)$  (cf. 
\cite[Theorems 10.9 and 11.1]{HMT11}, \cite[Section 5.3.2]{M11}, and
 \cite{T11}). 

Empirically  Algorithm   \ref{alg1} 
with SRFT  multipliers 
fails very rarely even for 
$l=r+20$, 
although for some special input matrices $M$
it is likely to fail 
if $l=o(r\log (r))$ 
(cf. 
 \cite[Remark 11.2]
{HMT11} or  \cite[Section 5.3.2]
{M11}). 
Researchers have consistently observed
similar empirical behavior 
of the algorithm  
 applied with SRHT and various other
 multipliers
(see \cite{HMT11}, \cite{M11}, \cite{W14}, \cite{PQY15},
and the references therein),\footnote{In view of part (ii) of Theorem \ref{thbd},
the results cited  for the classes of SRFT and SRHT
matrices also hold for the products of these classes with any unitary matrix,
in particular for the class of random $n\times l$ blocks of $n\times n$ circulant matrices
(see their definitions, e.g., in \cite{P01}).} but
so far no adequate formal support for that  empirical observation
has 
appeared in the huge bibliography 
on this highly popular subject.


\subsection{\bf Our goals, our dual theorem, and its implications}\label{sglsdl}


In this paper we are going to

(i) fill the void in the bibliography by supplying a missing
{\em formal support} for the cited observation, 
with far reaching implications (see parts (ii) and (iv) below), 

 (ii) define  {\em new more efficient policies of generation and application of
multipliers}  for low-rank approximation,

(iii) {\em test our policies numerically}, and 

(iv) {\em extend our progress} 
to other important areas of matrix  computations.

\medskip

Our basic {\em dual} Theorem \ref{th0d} below  
reverses the assumptions of our basic {\em primal} Theorem \ref{th0}
that a multiplier $B$ is Gaussian, 
while a matrix $M$ 
is fixed. 

\begin{theorem}\label{th0d} 
Let $M-E\in \mathcal G_{m,n,r}$ and 
 $||E||_2\approx 0$ (so that 
$\nrank (M)\le r$ and 
that $\nrank (M)=r$) with probability 1. Furthermore let
$B\in \mathbb R^{n\times l}$  and  
$\nrank (B)=l$. Then \\
(i)  Algorithm \ref{alg1}
outputs a rank-$r$ representation of a matrix $M$ 
with probability 1
if $E=0$ and if $l=r$ and \\
(ii) it 
outputs a rank-$l$ approximation of that matrix 
with a probability close to 1 if $p=l-r>3$
and approaching 1 fast as the integer $p$ grows from 4.
\end{theorem}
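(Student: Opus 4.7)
The plan is to mirror the proof of the primal Theorem \ref{th0} by exploiting the rotational invariance of the Gaussian distribution to reduce the auxiliary $r\times l$ product $VB$ to a standard Gaussian random matrix. Write the assumed factorization $M - E = UV$ with $U \in \mathcal G^{m\times r}$ and $V \in \mathcal G^{r\times n}$, so that $\rank(U) = \rank(V) = r$ almost surely by Theorem \ref{thrnd}. Next, perform a thin QR factorization $B = Q_B R_B$ with $Q_B \in \mathbb R^{n\times l}$, $Q_B^T Q_B = I_l$, and $R_B \in \mathbb R^{l\times l}$ nonsingular; the nonsingularity of $R_B$ follows from $\nrank(B) = l$, which forces $B$ to have full column rank. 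By rotational invariance of Gaussians, $V Q_B$ has the distribution of a Gaussian $r\times l$ matrix, and therefore $VB$ is distributed as $G_{r,l} R_B$.

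For part (i) one has $E = 0$ and $l = r$, so $VB$ is $r\times r$ and distributed as $G_{r,r} R_B$. Both factors are almost surely nonsingular---$G_{r,r}$ by Theorem \ref{thrnd} and $R_B$ by assumption---so $VB$ is too. Hence $MB = U(VB)$ has rank $r$, so $\mathcal R(MB) = \mathcal R(U) = \mathcal R(M)$, and the orthogonal projector $P = QQ^T$ computed by the algorithm fixes $M$, giving $\tilde M = M$ exactly.

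For part (ii), split $MB = UVB + EB$. The distributional identity $VB \stackrel{d}{=} G_{r,l} R_B$ with $l \ge r + 4$ implies that $VB$ has full row rank almost surely, so $UVB$ has rank $r$ and its range equals $\mathcal R(U)$. A one-line computation gives $(I - P)\,UVB = -(I - P)\,EB$, because $UVB + EB \in \mathcal R(P)$; applying a right-pseudoinverse of $VB$ then yields
\[
\|(I - P)\,U\| \;\le\; \frac{\|EB\|}{\sigma_r(VB)} \;\le\; \frac{\|E\| \cdot \|B\|}{\sigma_r(G_{r,l}) \cdot \sigma_l(R_B)}.
\]
Substituting into the split $(I - P)\,M = (I - P)\,UV + (I - P)\,E$ bounds $\|\tilde M - M\|$ by $\|V\|$ times the right-hand side above, plus $\|E\|$. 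Standard Gaussian tail bounds for the smallest singular value of $G_{r,l}$ and for the spectral norm of the Gaussian factor $V$ then show that the denominator is bounded away from zero and the whole bound is small, with failure probability decaying exponentially in $p = l - r$ as soon as $p \ge 4$.

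The main obstacle is matching the asserted probability rate quantitatively. The qualitative chain of inequalities above is routine once the distributional identity $VB \stackrel{d}{=} G_{r,l} R_B$ is in hand, but producing a failure probability that \emph{approaches $1$ fast as $p$ grows from $4$}, analogous to the $3\exp(-p)$ estimate of the primal Theorem \ref{th1}, requires careful joint control of the lower tail of $\sigma_r(G_{r,l})$ and the upper tail of $\|V\|$, combined with explicit dependence on the conditioning of the fixed multiplier $B$ through $\sigma_l(R_B)$.
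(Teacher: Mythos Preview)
Your argument is correct and takes a genuinely different route from the paper's. For part~(i) both approaches coincide: reduce to $\rank(MB)=r$ via rotational invariance (Lemma~\ref{lepr3}) applied to $V$. For part~(ii), however, the paper does not argue directly with the projector $P=QQ^T$. Instead it passes through the general error estimate of Theorem~\ref{thrrap}, which bounds $|\Delta-\sigma_{r+1}(M)|$ in terms of $\|(M_rB)^+\|$ by invoking a projector-perturbation result from \cite{PQY15}; it then bounds $\|(M_rB)^+\|$ in Theorem~\ref{thdual} by taking compact SVDs of \emph{both} $U$ and $B$ and sandwiching a Gaussian $T_U^TVS_B=G_{r,l}$ in the middle, obtaining $\|(MB)^+\|\le \|U^+\|\,\nu_{r,l}^+\,\|B^+\|$. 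The perturbation $E$ enters only at the very end via Theorem~\ref{thpert}. Your route is more self-contained: the identity $(I-P)UVB=-(I-P)EB$ lets you bound $\|(I-P)U\|$ directly, with a thin QR of $B$ in place of its SVD, and you never touch the truncation $M_r$ or the external lemma from \cite{PQY15}. You also use only the Gaussianity of $V$, whereas the paper's final constant in Theorem~\ref{th1d} carries the extra factor $\nu_{m,r}^+=\|U^+\|$ coming from the Gaussianity of $U$ (cf.\ Remark~\ref{regnrl}). What the paper's route buys is a single template---Theorem~\ref{thrrap}---driving both the primal and dual estimates and delivering the explicit constants of Theorem~\ref{th1d}; what your route buys is elementarity and a cleaner treatment of the additive perturbation $E$. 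Your closing paragraph is accurate: converting the qualitative chain into the stated probability rate is exactly a matter of combining the tail bounds of Theorems~\ref{thsignorm} and~\ref{thsiguna} on $\nu_{r,l}^+$ and $\|V\|$, with the fixed factor $\kappa(B)=\|B\|/\sigma_l(R_B)$ playing the same role it does in the paper's Theorem~\ref{th1d}.
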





Theorem \ref{th0d} implies that 
{\em Algorithm \ref{alg1} outputs 
a low-rank approximation to the average input 
matrix $M$ that has a small numerical rank} $r$
(and thus in a sense to most of such matrices)
if the multiplier $B$ 
is a well-conditioned matrix
of full rank and if  the
 average matrix is defined under the 
Gaussian probability distribution.
The former provision, that $\nrank(B)=l$, is  natural
for otherwise we could have replaced the multiplier $B$
by an $n\times l_-$ matrix for $l_-<l$. 
 The latter  customary provision is natural
in view of the Central Limit theorem.

As an immediate implication of the theorem, 
on the average input $M$
Algorithm \ref{alg11} applied with any unitary 
or just nonsingular and
well-conditioned $n\times n$ multiplier $B$
also outputs SUCCESS 
at its  earliest recursive Stage $h$
at which the dimension $l^{(h)}=\sum_{j=1}^h l_j$ exceeds $r-1$.
Such application
 can be viewed as {\em derandomization} of 
Algorithms \ref{alg1} and \ref{alg11} 
versus their common application with  Gaussian sampling.

In Sections \ref{smngflr} and \ref{ssprsml} we specify some promising policies
of the generation of multipliers based on these observations
and in Section \ref{ststs} perform numerical tests for 
such policies.


\subsection{\bf Related work, our novelties, and extension of our progress}\label{sextprg}


Part (ii) of our Theorem   \ref{th0} is
implied by \cite[Theorem 10.8]{HMT11}, but our specific supporting estimates are 
 more compact, cover the case of any $l\ge r$
(whereas \cite{HMT11} assumes that $l\ge r+4$),
and we deduce them by using a shorter proof (see Remark \ref{repfr}).    
Our approach, our results listed in Section \ref{sglsdl}, 
some of our  techniques, e.g., expan\-sion/compres\-sion 
in Section \ref{smngflr},
and even 
the concept of factor-Gaussian matrices 
are new, and so are our
families of multipliers and 
policies of their generation, combination, and application 
in Sections \ref{smngflr} and \ref{ssprsml} as well.

Moreover, our progress  
can be extended to a  variety of 
important matrix computations.
In Section \ref{sext} (Conclusions) 
we outline such  novel extensions
 to
 the highly popular and much studied
computations for Least Squares Regression,  the 
 Fast Multipole Method 
and the Conjugate Gradient Algorithms.\footnote{Hereafter we use the acronyms 
``LSR" for ``Least Squares Regression", ``FMM"
for ``Fast Multipole Method", 
and ``{\em CG}" for ``Conjugate Gradient".}
The extensions
 provide new insights
and new opportunities and should motivate 
further effort and further progress.


\subsection{\bf Organization of the paper}\label{sorg}


We organize our presentation as follows: 

\begin{itemize}
\item
In Section  \ref{smngflr} we describe some policies for
managing rare failures of Algorithm \ref{alg1}.
\item
In Section \ref{ssprsml} we present some efficient multipliers 
for low-rank approximation.
\item
In Section \ref{sprf} we
prove Theorems \ref{thall}, \ref{th0}, and \ref{th0d}. 
\item
Section \ref{ststs}
(the contribution of the second and the third authors)
covers our numerical tests.
\item
In Section \ref{sext} we extend our approach
to the acceleration of the LSR, FMM,
and CG algorithms.
\item
The Appendix covers basic definitions, auxiliary results, and
low-rank representation of a matrix. 

\end{itemize}


\section{Preventing and managing unlikely failure of Algorithm \ref{alg1}}\label{smngflr} 




{\bf Two conflicting goals and a simple recipe.}

We try to decrease:

(i) the cost of generation of a multiplier $B$ and of the computation of the 
matrices $MB$, $Q=Q(MB)$, $Q^TM$, and $QQ^TM-M$ and

(ii)  the chances for failure of Algorithm \ref{alg1}.

Towards these goals we should seek 
sparse and structured multipliers $B_1,\dots,B_h$ 
such that  Algorithm \ref{alg11} would succeed
already for $h=1$.
 Theorem \ref{th0d} implies that  
this holds for the average $m\times n$ matrix $M$  
having a small numerical rank $r$
whenever we apply an  $n\times l$ well-conditioned  multiplier $B$
of full rank $l\ge r$. Thus we readily achieve our goals
except for relatively rare  special matrices $M$. 

Real computations can deal with  ``rare special" 
input matrices $M$, not covered by Theorem \ref{th0d},
but in our extensive tests in Section \ref{ststs}
with a variety
of inputs, our small collection of sparse and structured multipliers in the next section
turned out to be powerful enough for fulfilling our goals. 

In the rest of this section we discuss the choice of the size 
of the multipliers and the policies of handling the cases where 
Algorithm \ref{alg11} needs more than a single recursive step in order to succeed.

\medskip

{\bf The size of multipliers.}
In the case of
generic input matrix $M$,
Algorithm \ref{alg1} has the same chances for success 
with any well-conditioned multiplier $B$ of full rank $l$,
by virtue of Theorem \ref{thbd}. 
Empirically these chances  are quite high and increase fast as
the integer parameter $p=l-r$ grows,
but then
the cost of computing the matrices
$MB$, $Q=Q(MB)$, $Q^TM$, and $QQ^TM-M$ increases as well,
contrary to our goal (i). So heuristic choice of 
a positive integer $p$ below 20 is preferred:
 ``setting $l=r+10$ or $l=r+20$
is typically more than  adequate"
according to \cite[Section 4.6]{HMT11} and to
our extensive tests.
 
\medskip

{\bf Reusing multipliers.}
Recall from 
\cite[Sections 8 and 9]{HMT11} that for all $j$
the matrices
$\tilde M^{(j)}=Q^{(j)}Q^{(j)T}M$
 and $\tilde M_j=Q_jQ_j^TM$, for $Q^{(j)}=Q(MB^{(j)})$,
 $Q_j=Q(MB_j)$, and $B^{(j)}=(B_1~|~\dots~|~B_j)$,
are the orthogonal projections 
of the matrices 
$MB^{(j)}$ and $MB_j$, 
respectively, onto the range of the matrix $M$.
Hence  
 $M-\tilde M^{(h)}=M-\tilde M^{(h-1)}-\tilde M_h$,
and so  at the $h$th stage of Algorithm \ref{alg11}, for $h>1$,
we can reuse such projections computed at its stage $h-1$
rather than recompute them.

\medskip
 
{\bf Compression of low-rank approximations via expansion and with heuristic amendment.}

Consider the following
{\bf expansion/compression algorithm:}
\begin{enumerate}
\item
Fix a larger dimension $l_+$, generate a sparse and structured   
$n\times l_+$ multiplier $B$, and compute the $m\times l_+$ product $MB$.
\item
Fix an integer $l$, $r\le l\le f l_+$ for  a reasonably small fraction $f$, 
generate a Gaussian $l_+\times l$
multiplier $G$ and compress the product $MB$ into 
the $m\times l$ matrix $MBG$. 
\end{enumerate}
For appropriate sparse and structured multipliers $B$, Stage 1 is inexpensive
even for $l_+=n$. Moreover, by virtue of Theorem \ref{th0d}, 
the class of input 
matrices $M$ for which $\mathcal (MB)\approx \mathcal S_r$  
is large and 
 grows fast as 
the integer  $l_+-r$ increases from 4. 
Stage 2 produces a matrix $MBG$, 
such  that it is likely that $\mathcal (MBG)\approx \mathcal S_r$
by virtue of Theorem \ref{th0} and
involves $(2l_+-1)ml$  flops, which is relatively
inexpensive if $l-r$ is not large and  if $l_+\ll \min\{m,n\}$.
Furthermore we can decrease this cost by applying a SRHT or STRF
multiplier $G$ instead of Gaussian (cf. \cite[Section 4.6]{HMT11}).

In our extensive tests in  Section \ref{ststs},
 even a simpler heuristic solution was always sufficient for the
compression of the output.
Namely, in our tests, Algorithm \ref{alg11} 
consistently succeeded after trying $h<4$ multipliers 
properly chosen (in line with Theorem \ref{th0d}),
and for $h>1$ we produced a desired compressed low-rank approximation
by applying the following heuristic amendment to the algorithm:

\medskip 

{\em Amendment 1 (combining failed multipliers):} 
if Algorithm \ref{alg11} has succeeded in $h>1$ recursive steps,
then re-apply it with the sum or linear combination of the $h$ multipliers
$B_1,\dots,B_h$ involved into these steps.




\section{Generation of Multipliers}\label{ssprsml}


\subsection{Multipliers  as rectangular submatrices}\label{srndmlt}


Given 
two integers $l$ and $n$,  $l\ll n$, we first generate
four classes of  very sparse primitive $n\times n$ matrices,
then combine them into some basic families of $n\times n$ matrices 
(we denote them $\widehat B$ in this section),
and finally define efficient $n\times l$ multipliers as  submatrices $B$,
  made up of $l$ fixed (e.g., leftmost) or random 
columns of the matrices $\widehat B$. In this case
 $\kappa(B)\le \kappa(\widehat B)$ 
(cf. \cite[Theorem 8.6.3]{GL13}). 

Random choice of $l$ columns involves $l$ random parameters, 
but in the transition $\widehat B\rightarrow B$
we lose  $n-l$ columns of the matrix  $\widehat B$
together with all random parameters (if any)
located in these columns. The arithmetic cost 
of the computation of the product $MB$
does not increase in the transition $\widehat B\rightarrow B$,
but can stay invariant, e.g., 
 if the matrix $\widehat B$ is structured and if the transition is by means
 of random choice of $l$ columns of the matrix $\widehat B$. 

The transition  $\widehat B\rightarrow B$ to a 
$p\times q$ matrix  $B$ where $\max\{p,q\}<n$
can be numerically unstable: it can increase the  condition number dramatically.
This observation restricts the proposed basic families (i) and (iii)
of $n\times n$ multipliers of Sections \ref{shad} and \ref{scrcabr}
 to the case where
$n$ is a power of 2. By using block representation 
of a matrix $\widehat B$, however, we can 
relax this restriction
 in the cases where   
$n=\sum_{i=1}^q(-1)^{m_i}2^{k_i}$ is an algebraic sum of a small number $q$ 
of powers of 2: in such cases we can use  matrices of basic families (i) and (iii)
as blocks of sizes $2^u\times 2^v$
for integers $u$ and $v$.
We refer the reader to  
 \cite{M11}, \cite{W14},  and
the bibliography therein 
for various other techniques for relaxing the restrictions
on the  size of multipliers. 


\subsection{$n\times n$ matrices of four  primitive types}\label{sdfcnd}

  



\begin{enumerate}
  \item
Fixed  or random
 {\em permutation matrix} $P$.
\item
A {\em diagonal matrix}  $D=\diag(d_i)_{i=0}^{n-1}$, with 
fixed  or random
diagonal entries $d_i$ such that
$|d_i|=1$ for all $i$ (and so each of $n$ entries $d_i$ lies on the unit circle $\{x:~|z|=1\}$, 
being either nonreal or  $\pm 1$).
\item
An $f$-{\em circular shift matrix} 
$Z_f=\big (\begin{smallmatrix} {\bf 0}^T  & ~f\\I_{n-1}   & ~{\bf 0}\end{smallmatrix}\big )$
and its transpose $Z_f^T$ for a fixed scalar $f$ such that either $f=0$ or $|f|=1$.
We write $Z=Z_0$, call $Z$ unit {\em down-shift matrix}, and call $Z_1$ unit {\em circulant matrix}. 
\item
A   $2s\times 2s$ {\em Hadamard  primitive matrix}
$H^{(2s)}=\big (\begin{smallmatrix} I_s  & ~I_s  \\
I_s   & -I_s\end{smallmatrix}\big )$
 for a positive integer $s$
 (cf. \cite{M11},  \cite{W14}).
\end{enumerate}

All these multipliers are very sparse, have
nonzero entries evenly distributed 
throughout them,  and can be
pre-multiplied by a vector
by using from 0 to $2n$ flops per a multiplier.
 All of them,
 except for the matrix $Z$, are unitary (or real orthogonal). Hence,
for  the average input matrix $M$, 
Algorithm \ref{alg1} succeeds with any of their $n\times l$ submatrix $B$
by virtue of  Theorem \ref{th0d},
and similarly with any $n\times l$ submatrix of $Z$ of full rank.
For sparse matrices $M$, 
the algorithm can  fail with these sparse multipliers 
in good accordance with our tests
(cf. Example \ref{ex1} below),
 but in all these tests we have consistently 
 succeeded with 
fixed (e.g., leading) or random  $n\times l$ submatrices
of various simple
{\em combinations} of our four primitives.


\begin{example}\label{ex1}
Let  $B$ denote   an $n\times r$
matrix.
Then $\rank(MB)=r$ for  
$M=\diag(I_r,O_{m-r,n-r})P$ 
and all permutation matrices $P$
if and
only if all $r\times r$ submatrices
of the multiplier $B$ are nonsingular.
\end{example}


\subsection{Basic combinations of  primitive matrices: general observations}\label{sdfcndf}


In the next subsections, by combining primitives 1--4, we
define families of $n\times n$ 
sparse and/or structured  matrices 
whose $n\times l$ submatrices $B$
 have been successfully tested as 
multipliers in Section \ref{ststs}.

For the choice of permutation and diagonal primitive 
matrices 
 trade-off is possible:  
by choosing the identity 
matrix $I_n$ for these primitives we decrease
the computational cost of the generation and application
of the multipliers, whereas by choosing
 random primitives
 we 
increase this cost but also increase the chances for success of  
Algorithm \ref{alg1}.
   

\subsection{Family (i): 
multipliers based on the Hadamard and Fourier processes}\label{shad}


  At first we recursively define the 
dense and orthogonal (up to scaling by constants) $n\times n$ matrices $H_n$ 
of {\em Walsh-Hadamard transform} for $n=2^k$ as follows
(cf.   \cite[Section 3.1]{M11} and our Remark \ref{recmb}): 
\begin{equation}\label{eqrfd}
H_{2q}=\begin{pmatrix}
H_{q} & H_{q} \\
H_{q} & -H_{q}
  \end{pmatrix}
\end{equation}
for $q=2^h$, $h=0,1,\dots,k-1$,
  and the Hadamard  primitive matrix
 $H_{2}=H^{(2)}=
\big (\begin{smallmatrix} 1  & ~~1  \\
1   & -1\end{smallmatrix}\big )$  of type 4
 for $s=1$.  

We can pre-multiply such a matrix $H_{n}$ by a vector by using $nk$ additions
and subtractions for $n=2^k$.

Next we sparsify it by defining it by a 
 shorter recursive process, that is, 
by fixing a  {\em recursion depth} $d$, $0<d<k$, and  applying equation (\ref{eqrfd}) where
$q=2^h$, $h=k-d,k-d+1,\dots,k-1$,  and $H_{2s}$ for $2s=2^{k-d+1}$
is the Hadamard  primitive matrix $H^{(2s)}$ of type 4.
For  $n=2^k$,  we denote the resulting $n\times n$ matrix
$H_{n,d}$ and  for $1\le d< k$ call it 
 $d$--{\em Abridged Hadamard
 (AH)
  matrix}. 

This matrix  
is still orthogonal (up to scaling),
 has $q=2^d$ nonzero entries 
in every row and  column, and hence
 is sparse unless 
$k-d$ is a small integer.
Pre-multiplication of such a  matrix by a vector uses  $dn$
additions
and subtractions
and allows
highly efficient  
 parallel implementation 
(cf. 
Remark \ref{resprs0}).

We similarly obtain sparse matrices
by shortening a recursive process
of the generation of the $n\times n$ matrix  $\Omega_n$
of {\em discrete Fourier transform (DFT)} at $n$ points,  for $n=2^k$:
\begin{equation}\label{eqdft}
\Omega_n=(\omega_{n}^{ij})_{i,j=0}^{n-1},~{\rm for}~n=2^k~{\rm and~a~primitive}~
n{\rm th~root~of~unity}~\omega_{n}=\exp(2\pi\sqrt {-1}/n).
\end{equation}

The matrix $\Omega_n$ is unitary up to scaling by $\frac{1}{\sqrt n}$.
We can multiply it by a vector by using $1.5nk$ flops
(by applying FFT) and can  generate it recursively as follows 
(cf. \cite[Section 2.3]{P01}  and our Remark \ref{recmb}):\footnote{Transposition of the matrices of this representation of FFT, called decimation 
in frequency (DIF) radix-2 representation, turns them into the matrices of the alternative
representation of FFT, called
decimation 
in time (DIT) radix-2 representation.}
\begin{equation}\label{eqfd}
\Omega_{2q}=
\widehat P_{2q}
\begin{pmatrix}\Omega_{q}&~~\Omega_{q}\\ 
\Omega_{q}\widehat D_{q}&-\Omega_{q}\widehat D_{q}\end{pmatrix},
\end{equation}
for $q=2^h$, $h=0,1,\dots,k$,  and
$\Omega_{2}=H_{2}=\big (\begin{smallmatrix} 1  & ~~1  \\
1   & -1\end{smallmatrix}\big ).$

Then again  we can sparsify this matrix by defining it by a 
 shorter recursive process, that is, 
by fixing a recursion depth $d$, $0<d<k$, and  applying equation (\ref{eqfd}) for
$q=2^h$, $h=k-d,k-d+1,\dots,k-1$,  and $\Omega_{2s}$ for $2s=2^{k-d+1}$
being the Hadamard  primitive matrix $H^{(2s)}$ of type 4.

For $1\le d\le k$ and $n=2^k$,   
we denote the resulting $n\times n$ matrix
$\Omega_{n,d}$ and call it 
 $d$-{\em Abridged   Fourier
 (AF)
  matrix}. It is also unitary (up to scaling),
 has $q=2^d$ nonzero entries 
in every row and column, and thus is
  sparse unless 
$k-d$ is a small integer. Pre-multiplication of such a matrix by a vector involves  $1.5dn$
flops
and agian allows
highly efficient  
 parallel implementation (cf. 
Remark \ref{resprs0}).

By applying fixed or random permutation and/or scaling to AH matrices
$H_n$ and  AF matrices $\Omega_n$, we obtain the 
sub-families of 
$d$--{\em Abridged Scaled and Permuted 
 Hadamard (ASPH)} matrices, $PDH_n$, and 
$d$--{\em Abridged Scaled and Permuted 
Fourier  (ASPF)} $n\times n$
 matrices, $PD\Omega_n$. Then  we define the families of
ASH, ASF, APH, and APF matrices, 
 $DH_n$, $D\Omega_n$, $PH_n$, and  $P\Omega_n$, respectively.
Each random permutation or scaling  
 contributes up to $n$ random parameters.  

\begin{remark}\label{recmb}
We can readily express representations (\ref{eqrfd}) and (\ref{eqfd})
as combinations of our  primitives 1--4:
$$H_{2q}=\diag(H_q,H_q)H^{(2q)}~{\rm and}~\Omega_{2q}=
\widehat P_{2q} \diag(\Omega_{q},\Omega_{q}\widehat D_q)H^{(2q)}$$
where $H^{(2q)}$ denotes a $2q\times 2q$ Hadamard's primitive matrix of type 4.
We can introduce more random parameters
by means of random recursive permutations and diagonal scaling as follows:
$$\widehat H_{2q}=P_{2q}D_{2q}\diag(\widehat H_q,\widehat H_q)H^{(2q)}~{\rm and}~
\widehat \Omega_{2q}=
P_{2q}D_{2q} \diag(\Omega_{q},\Omega_{q}\widehat D_q)H^{(2q)}$$
where $P_{2q}$ are $2q\times 2q$ random permutation matrices of primitive class 1
and $D_{2q}$ are $2q\times 2q$ random matrices of diagonal scaling of primitive class 2,
for all $q$. We need at most $2dn$ additions and subtractions
in order to pre-multiply a matrix $\widehat H_{n}$ by a vector
and at most $2.5dn$ flops in order to pre-multiply a matrix $\widehat H_{n}$ by a vector,
in both cases for $n$ being a power of 2.
\end{remark}


\subsection{ $f$-circulant and sparse   
$f$-circulant matrices}\label{scrcsp}


 Recall that an
 {\em $f$-circulant matrix}
$Z_f({\bf v})=\sum_{i=0}^{n-1}v_iZ_f^i$, 
for  
the matrix $Z_f$ of $f$-circular shift,
is defined by a 
scalar $f\neq 0$  and by
the first column ${\bf v}=(v_i)_{i=0}^{n-1}$ and
 is called {\em  circulant} if $f=1$ and {\em skew-circulant} if $f=-1$.
Such a matrix is nonsingular with probability 1 (see Theorem \ref{thrnd}) and
is likely to be well-conditioned \cite{PSZ15}
if  $|f|=1$  and if the vector ${\bf v}$ is Gaussian or is random and uniformly bounded.

{\bf FAMILY (ii)}  of  {\em sparse} $f$-{\em circulant matrices} 
$\widehat B=Z_f({\bf v})$ is
 defined by a fixed or random scalar $f$, $|f|=1$, and by
the  first column having exactly 
$q$ nonzero entries, for $q\ll n$.
The positions and values of nonzeros can be
 randomized (and then the matrix would depend on up to $2n+1$ random values).

Such a matrix can be pre-multiplied by a vector by using at most 
$(2q-1)n$ flops  or, in the real case where $f=\pm 1$ and $v_i=\pm 1$
for all $i$, by using at most
$qn$ additions and subtractions. 

The same cost estimates apply to the  generalization 
of such a matrix $Z_f({\bf v})$ to
any    
 sparse matrix with exactly $q$ nonzeros entries $\pm 1$ 
in every row and in every column for $1\le q\ll n$, which
can be defined as the sum $\sum_{i=1}^q\widehat D_iP_i$
for fixed or random matrices  $P_i$  and $\widehat D_i$ of  primitive  types 1 and 2,
respectively.


\subsection{Abridged   
$f$-circulant matrices}\label{scrcabr}


First recall the following well-known expression for a
 $g$-circulant matrix: 
$$Z_g({\bf v})=\sum_{i=0}^{n-1}v_iZ_g^i=D_f^{-1}\Omega_n^HD\Omega_nD_f$$
where $g=f^n$,  $D_f=\diag(f^i)_{i=0}^{n-1}$, ${\bf v}=(v_i)_{i=0}^{n-1}=(\Omega_nD_f)^{-1}{\bf d}$, 
${\bf d}=(d_i)_{i=0}^{n-1}$, and
$D=\diag(d_i)_{i=0}^{n-1}$
(cf. \cite[Theorem 2.6.4]{P01}).
For $f=1$, the expression is simplified: $g=1$, $D_f=I_n$, $F=\Omega_n$, and
$Z_g({\bf v})=\sum_{i=0}^{n-1}v_iZ_1^i$
is a circulant matrix:
$$Z_1({\bf v})=\Omega_n^HD\Omega_n,~D=\diag(d_i)_{i=0}^{n-1},~{\rm for}~ 
{\bf d}=(d_i)_{i=0}^{n-1}=\Omega_n{\bf v}.$$

Pre-multiplication of an $f$-circulant matrix by a vector 
is reduced to pre-multiplication of the matrices $\Omega$
and $\Omega^H$ by two vectors and in  addition 
to $4n$ flops (or $2n$ in case of a circulant matrix).
 This involves $O(n\log (n))$ flops overall
and then again allows highly efficient
 parallel implementation
(see Remark \ref{resprs0}).

For a fixed scalar $f$, we can define the matrix $C_g({\bf v})$ by 
 any of the vectors ${\bf v}$ or  ${\bf d}=(d_i)_{i=0}^{n-1}$. 
The matrix is unitary (up to scaling) if $|f|=1$ and if $|d_i|=1$ for all $i$.
The family of such matrices is defined by $n+1$ real parameters 
(or by $n$ such parameters for a fixed $f$)
which we can fix or choose at random.

Now suppose that $n=2^k$, $1\le d<k$, $d$ and $k$ are integers,
and substitute a pair of AF
matrices of recursion length $d$ for two factors $\Omega_n$ in the
above expressions. 
Then the resulting {\em abridged $f$-circulant matrix} $C_{g,d}({\bf v})$
{\em  of recursion depth} $d$ is still unitary  (up to scaling),
defined by $n+1$ or $n$ parameters $d_i$ and $f$, 
is sparse unless the positive integer $k-d$ is  small,
and can be pre-multiplied by a vector by using $(3d+3)n$ flops.
Instead of AF matrices, we can substitute  a pair of 
ASPF, APF, ASF, AH,
ASPH, APH, or ASF
matrices
for the factors
$\Omega_n$. 
Such matrices form {\bf FAMILY (iii)} of
 $d$--{\em abridged $f$-circulant matrices}.


\subsection{Inverses of bidiagonal matrices  
}\label{sinvchh}


{\bf FAMILY (iv)}  is formed by the {\em inverses of bidiagonal matrices}
$$\widehat B=(I_n+DZ)^{-1}~{\rm or}~(I_n+Z^TD)^{-1}$$ for 
a matrix $D$  of   primitive  type 2 and the down-shift matrix
$Z$. 

We can randomize the matrix $\widehat B$ 
 by choosing up to $n-1$ random diagonal entries of
the matrix $D$
(whose leading entry  makes no impact on $\widehat B$) and 
 can pre-multiply the matrix $\widehat B$  by
a vector by using $2n-1$ flops or, in the real case, just $n-1$ additions and subtractions.

$||\widehat B||\le \sqrt {n}$ because nonzero entries of the  lower triangular 
matrix $\widehat B=(I_n+DZ)^{-1}$ have absolute values 1,  and
clearly $||\widehat B^{-1}||=||I_n+DZ||\le \sqrt 2$. Hence 
$\kappa(\widehat B)=||\widehat B||~||\widehat B^{-1}||$
 (the spectral condition number of  $\widehat B$) cannot exceed
$\sqrt {2n}$ for $\widehat B=(I_n+DZ)^{-1}$,
and the same bound holds for $\widehat B=(I_n+Z^TD)^{-1}$.
 

\subsection{Summary of estimated numbers of flops and random variables involved}\label{sflprnd}


Table \ref{tabmlt}  shows 
upper bounds on (a) the numbers of random variables involved into the matrices $\widehat B$
of the four families (i)--(iv)
and (b) the numbers of flops for pre-multiplication of such a matrix by
 a vector.\footnote{The asterisks in the table
show that the matrices 
of families (i) AF, (i) ASPF, and (iii) involve nonreal roots of unity.}
For comparison, using a  Gaussian $n\times n$  multiplier  involves  $n^2$ random variables 
and $(2n-1)n$ flops.


\begin{table}[ht] 
  \caption{The numbers of random variables and flops}
\label{tabmlt}

  \begin{center}
    \begin{tabular}{|*{8}{c|}}
      \hline
family &  (i) AH & (i) ASPH & (i) AF &  (i) ASPF & (ii) &  (iii)& (iv)  
\\ \hline
random variables & 0  &$2n$ & 0  & $2n$ & $2q+1$  
& $n$ &   $n-1$  
\\\hline
flops complex  &  $dn$ & $(d+1)n$ & $1.5dn$ & $(1.5d+1)n$  & $(2q-1)n$     & $(3d+2)n$ &   $2n-1$
 
\\\hline
 flops in real case & $dn$  & $(d+1)n$  & * & *  & $qn$ &  *    
 &  $n-1$ 
\\\hline

    \end{tabular}
  \end{center}
\end{table}

\begin{remark}\label{resprs0}
Other observations besides flop estimates  can be  decisive.
E. g., a
special recursive structure 
 of  an ARSPH matrix $H_{2^{k},d}$ and 
an ARSPF matrix $\Omega_{2^{k},d}$
allows
highly efficient  
 parallel implementation of  
their pre-multiplication by a vector based on 
Application Specific Integrated Circuits (ASICs) and 
Field-Programmable Gate Arrays (FPGAs), incorporating Butterfly
Circuits \cite{DE}.
\end{remark}

\begin{remark}\label{resprs01}
We are likely to save flops for the 
 approximation of the product $M\widehat B$
if we use
leverage scores \cite{W14}
(a.k.a. sampling probabilities \cite[Sections 3 and 5]{M11}).
\end{remark}


\subsection{Other basic families}\label{sbscfml}


There is a number of other interesting basic matrix families.
According to \cite[Remark 4.6]{HMT11}, ``among the structured random matrices ....
one of the strongest candidates involves sequences of random Givens rotations".
The sequences are  defined in factored form,
with two of the factors being the products of $n-1$ Givens rotations each,
two being permutation  matrices, three  matrices of
diagonal scaling, and the DFT matrix $\Omega_n$.
The DFT factor makes the resulting matrices dense, but we can make them sparse
by  replacing that factor by an 
 AF, ASF, APF, or ASPF matrix of recursion depth $d<\log_2(n)$. This would also decrease 
the number of flops  involved in pre-multiplication of such a multiplier by a vector
from order $n\log_2(n)$ to $1.5dn+O(n)$. 

We can obtain  new candidate families of efficient multipliers by replacing 
either or both of the Givens products with sparse matrices of Householder reflections
 matrices  of the form
$I_n-\frac{2{\bf h}{\bf h}^T}{{\bf h}^T}$
for fixed or random 
vectors ${\bf h}$ (cf. \cite[Section 5.1]{GL13}).
We can make these matrices sparse by choosing sparse vectors ${\bf h}$.

We  can  obtain a great variety of the families of multipliers promising to be efficient 
if we properly combine the matrices of basic families (i)--(iv) extended 
by the above matrices. This can be just linear  combinations,
but we can also use block representation as in the following    
real  
$2\times 2$ block matrix $\frac{1}{\sqrt n}\begin{pmatrix}
Z_1({\bf u}) & Z_1({\bf v})  \\
Z_1({\bf v}) & -Z_1({\bf u}) 
\end{pmatrix}D$
for two  vectors ${\bf u}$ and ${\bf v}$
and a  matrix $D$ of primitive class 2.

 The reader 
can find other useful families of multipliers
in our Section \ref{ststs}. E.g., according to our tests
in  Section \ref{ststs}, it turned out to be  efficient
to use  nonsingular well-conditioned (rather than unitary)
diagonal factors in the definition of some of our basic matrix families.


\section{Proof of  Theorems \ref{thall}, \ref{th0}, and \ref{th0d}}\label{sprf}


\subsection{Low-rank representation: proof}\label{slrr}



\begin{theorem}\label{thlrnk} 
(i) For an $m\times n$ input matrix $M$ of rank $r\le n\le m$,
its rank-$r$ representation is given by the products
$R(R^TR)^{-1}R^TM=Q(R)Q(R)^TM$
provided that $R$ is an $n\times r$ matrix such that  $\mathcal R(R)=\mathcal R(M)$
and that $Q(R)$ is a matrix obtained by means of column orthogonalization of $R$.  

(ii)  $\mathcal R(R)=\mathcal R(M)$,  
for $R=MB$ and an $n\times r$ matrix $B$, 
with probability $1$
if $B$ is Gaussian and 

(iii) with a probability at least $1-r/|S|$ if an $n\times r$ matrix $B$ 
has i.i.d. random entries sampled 
 uniformly from a finite set $\mathcal S$
of cardinality $|S|$.
\end{theorem}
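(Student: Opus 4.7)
The plan is to handle the three parts in order, relying on standard properties of orthogonal projectors and the Schwartz--Zippel lemma. (Note that, with $M$ of size $m\times n$, consistency of $\mathcal R(R)=\mathcal R(M)$ forces $R$ to have $m$ rows, so I read the stated ``$n\times r$'' as ``$m\times r$''.)

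For part (i), I would first observe that since the columns of $R$ span the $r$-dimensional space $\mathcal R(M)$ they are linearly independent, so the $r\times r$ Gram matrix $R^TR$ is symmetric positive definite and invertible. A direct computation shows that $\Pi:=R(R^TR)^{-1}R^T$ is symmetric, idempotent, and satisfies $\Pi R=R$, so it is the unique orthogonal projector onto $\mathcal R(R)$. On the other hand, $Q(R)$ has orthonormal columns spanning $\mathcal R(R)$, hence $Q(R)Q(R)^T$ is also the orthogonal projector onto that same subspace; by uniqueness, the two expressions coincide. Because every column of $M$ lies in $\mathcal R(M)=\mathcal R(R)$, we have $\Pi M=M$, which exhibits $M$ as the product of the $m\times r$ matrix $Q(R)$ and the $r\times n$ matrix $Q(R)^TM$, as claimed.

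For part (ii), the containment $\mathcal R(MB)\subseteq\mathcal R(M)$ is automatic, so it suffices to prove $\rank(MB)=r$ almost surely. I would fix a thin rank factorization $M=UV^T$ with $U\in\mathbb R^{m\times r}$ and $V^T\in\mathbb R^{r\times n}$ both of rank $r$, so that $MB=U(V^TB)$ and $\rank(MB)=\rank(V^TB)$. The map $B\mapsto\det(V^TB)$ is a polynomial of degree $r$ in the $nr$ entries of $B$, and it is not identically zero: choosing the columns of $B$ to be $r$ standard basis vectors indexing a nonsingular $r\times r$ submatrix of $V^T$ (which exists because $V^T$ has rank $r$) makes $V^TB$ nonsingular. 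A nonzero polynomial vanishes on a set of Lebesgue measure zero in $\mathbb R^{nr}$, and the Gaussian law is absolutely continuous, so the determinant is nonzero with probability one.

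For part (iii), I would repeat the same reduction to the nonvanishing of $\det(V^TB)$, now for $B$ drawn i.i.d.\ uniformly from $\mathcal S$. By the Schwartz--Zippel lemma, a nonzero polynomial in $nr$ variables of total degree at most $r$ vanishes on at most $r|\mathcal S|^{nr-1}$ points of $\mathcal S^{nr}$, so the failure probability is at most $r/|\mathcal S|$, giving the claimed success probability of at least $1-r/|\mathcal S|$. The main (minor) obstacle is just pinning down the dimensions and exhibiting an explicit $B$ witnessing that the determinant polynomial is not the zero polynomial so that the measure-zero and Schwartz--Zippel arguments are nonvacuous; the rest is routine.
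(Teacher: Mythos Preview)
Your proof is correct and follows essentially the same route as the paper. The paper dismisses part (i) with ``readily verify'' and for parts (ii) and (iii) notes $\mathcal R(MB)\subseteq\mathcal R(M)$, so that equality holds iff $\rank(MB)=r$, and then invokes its Theorem~\ref{thrnd}, whose proof is exactly the nonzero-polynomial / Lebesgue-measure-zero argument in the Gaussian case and the DeMillo--Lipton/Schwartz--Zippel lemma in the finite-set case; you have simply inlined that argument via the factorization $M=UV^T$ and an explicit witness for nonvanishing. Your remark that $R$ must have $m$ (not $n$) rows for $\mathcal R(R)=\mathcal R(M)$ to make sense is a correct catch of a typo in the statement.
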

\begin{proof}
Readily verify part (i). Then note that
$\mathcal R(MB)\subseteq\mathcal R(M)$, for an $n\times r$ 
multiplier $B$.
Hence $\mathcal R(MB)=\mathcal R(M)$ if and only if $\rank(MB)=r$,
and therefore if and only if a multiplier $B$ has full rank $r$.

Now parts (ii) and (iii) follow from
 Theorem \ref{thrnd}.
\end{proof}

Parts (i) and (ii)  of Theorem \ref{thlrnk} 
imply parts (i) of Theorems \ref{th0} and  \ref{th0d}.


\subsection{From low-rank representation to  low-rank approximation: a basic step}\label{slrrlra}


Our extension of the above results to the proof of  Theorems  \ref{thall},
 \ref{th0}, and  \ref{th0d}  relies on our next 
lemma and theorem.
Hereafter $\sigma_j(M)$ denotes the $j$th largest singular value of the matrix $M$
(cf. Appendix \ref{sdef}).


\begin{lemma}\label{letrnc} (Cf. \cite[Theorem 2.4.8]{GL13}.)
For an integer $r$ and an $m\times n$ matrix $M$ where $m\ge n>r>0$,
set to 0
the singular values $\sigma_j(M)$,  for $j>r$,
 let $M_r$ denote the resulting matrix, which is a closest rank-$r$
approximation of $M$, and write 
$M=M_r+E.$
Then
$$||E||=\sigma_{r+1}(M)~
{\rm and}~||E||_F^2=\sum_{j=r+1}^n\sigma_j^2\le \sigma_{r+1}(M)^2(n-r).$$
\end{lemma}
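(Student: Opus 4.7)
The plan is to invoke the singular value decomposition of $M$ directly and then read off both identities by unitary invariance of the two norms. Write $M=U\Sigma V^H$ with $U\in\mathbb C^{m\times n}$, $V\in\mathbb C^{n\times n}$ unitary (thin SVD) and $\Sigma=\diag(\sigma_1(M),\dots,\sigma_n(M))$, ordered nonincreasingly. By the very construction of $M_r$ in the statement, we have $M_r=U\Sigma_r V^H$ where $\Sigma_r=\diag(\sigma_1(M),\dots,\sigma_r(M),0,\dots,0)$, so
\[
E=M-M_r=U(\Sigma-\Sigma_r)V^H=U\diag(0,\dots,0,\sigma_{r+1}(M),\dots,\sigma_n(M))V^H.
\]

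First I would establish the spectral norm identity. Since $U$ and $V$ are unitary, $\|E\|=\|\Sigma-\Sigma_r\|$, and the spectral norm of a diagonal matrix is the maximum of the absolute values of its diagonal entries; because the $\sigma_j(M)$ are ordered, that maximum is exactly $\sigma_{r+1}(M)$. Next I would handle the Frobenius norm in the same spirit: unitary invariance gives $\|E\|_F^2=\|\Sigma-\Sigma_r\|_F^2=\sum_{j=r+1}^n\sigma_j(M)^2$, which is the claimed equality. The upper bound $\|E\|_F^2\le\sigma_{r+1}(M)^2(n-r)$ then falls out immediately from $\sigma_j(M)\le\sigma_{r+1}(M)$ for all $j\ge r+1$ and the fact that the sum contains $n-r$ terms.

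There is no real obstacle here; the entire argument reduces to the unitary invariance of the two norms together with the form of the SVD. The only point worth flagging is the consistency check that $M_r$ as defined (zero out the trailing singular values) actually is a closest rank-$r$ approximation in either norm, which is the Eckart--Young--Mirsky theorem and is cited as background in \cite[Theorem 2.4.8]{GL13}; this is used only to justify the parenthetical remark in the lemma statement and plays no role in the two displayed estimates.
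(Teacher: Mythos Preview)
Your argument is correct and complete: writing $E=U(\Sigma-\Sigma_r)V^H$ from the SVD and invoking unitary invariance of the spectral and Frobenius norms yields both identities immediately, and the inequality follows from monotonicity of the singular values. The paper does not supply its own proof of this lemma; it simply cites \cite[Theorem 2.4.8]{GL13} (the Eckart--Young theorem) and states the result, so your write-up is essentially the standard justification that the paper takes for granted.
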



\begin{theorem}\label{thrrap} {\rm The error norm 
 in terms of $||(M_rB)^+||$}. 
Assume dealing with the
 matrices $M$ and $\tilde M$  
 of 
Algorithm  \ref{alg1},
$M_r$ and $E$ of Lemma \ref{letrnc}, 
  and $B\in \mathbb C^{n\times l}$ of rank $l$. 
Let $\rank(M_rB)=r$ and write $E'=EB$ and
$\Delta=||\tilde M-M||$.  Then 
\begin{equation}\label{eqe'} 
||E'||_F\le ||B||_F~||E||_F\le ||B||_F~\sigma_{r+1}(M)~ \sqrt{n-r}
\end{equation}
 and
\begin{equation}\label{eqdltsgm}
|\Delta-
\sigma_{r+1}(M)|\le
 \sqrt 8~||(M_rB)^+||~||E'||_F+O(||E'||_F^2).
\end{equation}
 \end{theorem}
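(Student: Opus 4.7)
The plan is as follows. Inequality (eqe') is immediate from Frobenius submultiplicativity, $||EB||_F \le ||B||_F \, ||E||_F$, together with Lemma \ref{letrnc}, which supplies $||E||_F \le \sigma_{r+1}(M)\sqrt{n-r}$.

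For inequality (eqdltsgm), my plan is a first-order perturbation analysis of the orthoprojector $P_R := QQ^T$ onto $\mathcal R(MB)$. I would set $A := M_rB$ (of rank $r$ by hypothesis) and $R := MB = A + E'$, so that $\Delta = ||(I - P_R)(M_r + E)||$. The unperturbed anchor is $E' = 0$: there $P_R$ reduces to the projector onto $\mathcal R(M_r)$, so $(I - P_R) M_r = 0$, and $(I - P_R) E = E$ because $\mathcal R(E) \perp \mathcal R(M_r)$ by the SVD splitting underlying Lemma \ref{letrnc}; hence $\Delta = ||E|| = \sigma_{r+1}(M)$ exactly, and the inequality holds trivially.

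The first-order correction is extracted from the identity $(I - P_R) R = 0$. Writing $R = A + E'$ gives $(I - P_R) A = -(I - P_R) E'$, and substituting $M_r = (M_rB)(M_rB)^+ M_r$ (valid since $\mathcal R(A) = \mathcal R(M_r)$ under the rank hypothesis) yields the key identity
\[
(I - P_R) M_r = -(I - P_R) E' (M_rB)^+ M_r,
\]
exhibiting $||(I - P_R) M_r||_F$ as first-order in $||E'||_F$ with coefficient governed by $||(M_rB)^+||$. I would pair this with the Pythagorean relation $||(I - P_R) E||_F^2 = ||E||_F^2 - ||P_R E||_F^2$, in which $||P_R E||_F = O(||E'||_F)$ since $P_R E \to 0$ as $E' \to 0$, giving $||(I - P_R) E||_F = ||E||_F + O(||E'||_F^2)$. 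Decomposing $\Delta = ||(I - P_R) M_r + (I - P_R) E||$ and applying triangle and reverse-triangle inequalities against $||E|| = \sigma_{r+1}(M)$ will then deliver the claimed bound, with the constant $\sqrt 8$ emerging by summing the $\sqrt 2$-type contributions of the two pieces and collecting higher-order cross terms into $O(||E'||_F^2)$.

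The main obstacle I anticipate is the rank mismatch $\rank(R) = l > r = \rank(A)$: the projector $P_R$ is not a smooth small perturbation of $P_{\mathcal R(M_r)}$ in the usual sense, since the ranks differ. Moreover, the naive bound from the key identity picks up a factor $||(M_rB)^+ M_r|| \le ||(M_rB)^+|| \cdot ||M_r||$, so producing the clean $\sqrt 8 \, ||(M_rB)^+||$ coefficient (without an extraneous $||M_r||$) will require a finer orthogonal decomposition of $\mathcal R(R)$ into an $r$-dimensional subspace close to $\mathcal R(M_r)$ plus an $(l-r)$-dimensional transverse residual, analyzing each projector component separately and exploiting the SVD-inherited orthogonality $\mathcal R(E) \perp \mathcal R(M_r)$ to suppress cross terms.
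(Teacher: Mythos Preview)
Your overall strategy is the paper's: establish the anchor $\|M - Q(M_rB)Q(M_rB)^T M\| = \sigma_{r+1}(M)$ (using the SVD orthogonality $\mathcal R(E) \perp \mathcal R(M_r)$, which the paper invokes implicitly to get $Q(M_rB)^T E = 0$), then control the deviation of the projector $Q(MB)Q(MB)^T$ from $Q(M_rB)Q(M_rB)^T$, and finish by the triangle inequality. The substantive difference is that the paper does not attempt to derive the projector-perturbation bound: it simply cites \cite[Corollary~C.1]{PQY15} with $A = M_rB$ and perturbation $E'$ to obtain
\[
\|Q(MB)Q(MB)^T - Q(M_rB)Q(M_rB)^T\| \le \sqrt 8\, \|(M_rB)^+\|\, \|E'\|_F + O(\|E'\|_F^2)
\]
as a black box, after which the proof is one line. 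The two obstacles you flag --- the rank mismatch $l > r$ between $\rank(MB)$ and $\rank(M_rB)$, and producing the clean coefficient $\sqrt 8\,\|(M_rB)^+\|$ rather than $\|(M_rB)^+\|\,\|M_r\|$ from your identity $(I - P_R)M_r = -(I - P_R)E'(M_rB)^+ M_r$ --- are genuine, but they are packaged entirely inside that cited corollary; carrying out your ``finer orthogonal decomposition'' would amount to re-deriving the PQY15 result. Your route buys self-containment; the paper's buys a three-line proof. (As an aside, the paper's final ``combine'' step shares the very concern you raise: the naive triangle inequality yields $|\Delta - \sigma_{r+1}(M)| \le \|Q(MB)Q(MB)^T - Q(M_rB)Q(M_rB)^T\|\cdot\|M\|$, so the missing $\|M\|$ factor is glossed over there too.)
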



\begin{proof}
Lemma \ref{letrnc} implies  bound (\ref{eqe'}). 

Now apply part (ii) of
Theorem \ref{thlrnk} for matrix $M_r$ replacing $M$,
recall  that $\rank(B)=l$, and obtain
$$Q(M_rB)Q(M_rB)^TM_r=M_r,~
\mathcal R(Q(M_rB))=\mathcal R(M_rB)=\mathcal R(M_r).$$ 
Therefore $$Q(M_rB)Q(M_rB)^TM=Q(M_rB)Q(M_rB)^TM_r=M_r.$$

Consequently,  
 $M-Q(M_rB)Q(M_rB)^TM=M-M_r=E$, and  so
(cf. Lemma \ref{letrnc})
\begin{equation}\label{eqrrnm0}
||M-Q(M_rB)Q(M_rB)^TM||=\sigma_{r+1}(M).
\end{equation}


\noindent Apply  \cite[Corollary C.1]{PQY15}, for 
$A=M_rB$  and $E$ replaced by $E'=(M-M_r)B$, and obtain 
$$||Q(MB)Q(MB)^T-Q(M_rB)Q(M_rB)^T||\le
\sqrt 8||(M_rB)^+||~||E'||_F+O(||E'||_F^2).$$

Combine this 
 bound  with
(\ref{eqrrnm0}) and
obtain (\ref{eqdltsgm}).
\end{proof}
 



By combining parts (\ref{eqe'}) and (\ref{eqdltsgm}) obtain


\begin{equation}\label{eqdlt}
|\Delta-\sigma_{r+1}(M)|\le 
\sqrt {8(n-r)}~\sigma_{r+1}(M)~||B||_F||(M_rB)^+||+O(\sigma_{r+1}^2(M)).
\end{equation}
In our applications the product $\sigma_{r+1}(M)||B||_F$ is small,  
 and so the value $|\Delta-\sigma_{r+1}(M)|$ 
is  small unless the  norm $||(M_rB)^+||$ is large.


\begin{remark}\label{rernlrpr} {\em  The Power Scheme of increasing the output accuracy 
of Algorithm \ref{alg1}.} See \cite{RST09}, \cite{HMST11}.
Define the Power Iterations
$M_i=(M^TM)^iM$, $i=1,2,\dots$. 
Then $\sigma_j(M_i)=(\sigma_j(M))^{2i+1}$
for all $i$ and $j$  \cite[equation (4.5)]{HMT11}. 
Therefore, at a reasonable computational cost, one can
dramatically decrease the ratio
$\frac{\sigma_{r+1}(M)}{\sigma_r(M)}$ and thus decrease 
 the bounds of Theorems \ref{th1}
and \ref{th1d}  accordingly.
\end{remark}


\subsection{Proof of Theorem \ref{thall}}\label{sprth1}


If $\rank(MB)=\rank(Q)=r_-<r$, then $\rank(\tilde M)\le r_-<r$,
$\Delta\ge \sigma_{r_-}(M)$, that is, not small since $\nrank(M)=r>r_-$,
and so Algorithm \ref{alg1}
applied to $M$ with the multiplier $B$ outputs FAILURE.
If $\rank(MB)=r>\nrank(MB)=r_-$, 
then $\rank(MB-E)=r_-<r$ for a small-norm perturbation matrix $E$.
Hence $\Delta\ge \sigma_{r_-}(M)-O(||E||)$,
and then again Algorithm \ref{alg1}
applied to $M$ with the multiplier $B$ outputs FAILURE.
This proves the ``only if" part of the claim of Theorem \ref{thall}.

Now let $\nrank(MB)=r$ and assume that we scaled the matrix $B$
so that $||B||_F=1$.
Then $\rank(MB)=r$ (and so we can apply bound (\ref{eqdlt})),
and furthermore $\nrank(M_rB)=\nrank(MB)=r$.
Equation (\ref{eqdlt}) implies that 
$\Delta\approx 8\sqrt{8(n-r)}\sigma_{r+1}||(M_rB)^+||$,
which is a small positive value because $\nrank (M)=r$,
and so the value $|\sigma_{r+1}|$ is small,
and part ``if" of Theorem \ref{thall} follows.


\subsection{Detailed estimates for primal and dual low-rank approximation}\label{slra}


The following theorem, proven in the next subsection,   
bounds  
the approximation errors and the 
probability of success of Algorithm \ref{alg1} for $B\in \mathcal G^{n\times l}$.
Together these bounds imply part (ii) of Theorem \ref{th0}. 

\begin{theorem}\label{th1} 
Suppose that  Algorithm \ref{alg1}  has been applied to 
an $m\times n$ matrix $M$ having numerical rank $r$ and
 that the multiplier $B=G_{n,l}$ is an $n\times l$ Gaussian matrix.


(i) Then the algorithm outputs an approximation $\tilde M$ of a matrix $M$ 
 by a rank-$l$ matrix within the  error norm bound
  $\Delta$ such that 
$|\Delta-\sigma_{r+1}(M)|\le f\sigma_{r+1}(M)+O(\sigma_{r+1}^2(M))$ where 
$f=\sqrt {8(n-r)}~\nu_{F,n,l}\nu^+_{r,l}/\sigma_r(M)$
and
$\nu_{F,n,l}$ and $\nu^+_{r,l}$
are random variables 
of Definition \ref{defnrm}.


(ii) $\mathbb E(f)<\frac{1+\sqrt n+\sqrt l}{p\sigma_r(M)}~e~\sqrt{8(n-r)rl}$, 
for  $p=l-r>0$  and  $e=2.71828\dots$.
\end{theorem}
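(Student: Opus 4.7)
The plan is to chain together the bound already proved in Theorem \ref{thrrap} with a rotational-invariance argument that reduces $(M_rB)^+$ to the pseudoinverse of a small Gaussian matrix, and then to extract the expectation in part (ii) using Cauchy--Schwarz and standard Gaussian moment estimates.

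For part (i) I would start directly from equation (\ref{eqdlt}):
\begin{equation*}
|\Delta-\sigma_{r+1}(M)|\le \sqrt{8(n-r)}\,\sigma_{r+1}(M)\,\|B\|_F\,\|(M_rB)^+\|+O(\sigma_{r+1}^2(M)).
\end{equation*}
Introducing the thin SVD $M_r=U_r\Sigma_r V_r^T$ with $\Sigma_r=\diag(\sigma_1(M),\dots,\sigma_r(M))$, I get $(M_rB)^+=(V_r^TB)^+\Sigma_r^{-1}U_r^T$, and hence $\|(M_rB)^+\|\le \|(V_r^TB)^+\|/\sigma_r(M)$. Since $V_r$ has orthonormal columns, rotational invariance of the standard Gaussian distribution makes $V_r^TB$ an $r\times l$ Gaussian matrix, and identifying $\nu_{F,n,l}=\|B\|_F$ with $\nu^+_{r,l}=\|(V_r^TB)^+\|$ (as in Definition \ref{defnrm}) produces exactly the stated expression for $f$.

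For part (ii) I would decouple the two random quantities by extending $V_r$ to an orthogonal matrix $[V_r~|~V_\perp]$ and writing $C_1=V_r^TB\in\mathbb R^{r\times l}$ and $C_2=V_\perp^TB\in\mathbb R^{(n-r)\times l}$. By rotational invariance $C_1$ and $C_2$ are independent Gaussian matrices, and $\|B\|_F^2=\|C_1\|_F^2+\|C_2\|_F^2$. Applying $\sqrt{a^2+b^2}\le a+b$ and the independence of $C_2$ from $(C_1,C_1^+)$ yields
\begin{equation*}
\mathbb{E}(\nu_{F,n,l}\,\nu^+_{r,l})\le \mathbb{E}(\|C_1\|_F\,\|C_1^+\|)+\mathbb{E}(\|C_2\|_F)\,\mathbb{E}(\|C_1^+\|).
\end{equation*}
Cauchy--Schwarz bounds the first term by $\sqrt{\mathbb{E}\|C_1\|_F^2\cdot\mathbb{E}\|C_1^+\|^2}$, and then the classical Gaussian moment estimates from \cite[Propositions 10.1--10.4]{HMT11}, namely $\mathbb{E}\|C_1\|_F^2=rl$, $\mathbb{E}\|C_2\|_F\le\sqrt{(n-r)l}$, $\mathbb{E}\|C_1^+\|\le e\sqrt l/p$, and $\mathbb{E}\|C_1^+\|^2=O(l/p^2)$ for $p>0$, combine to deliver the stated bound after algebraic simplification.

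The main obstacle is the statistical coupling between the two factors in $f$: $(M_rB)^+$ depends only on the ``top'' block $C_1=V_r^TB$, while $\|B\|_F$ also involves the independent ``orthogonal'' block $C_2=V_\perp^TB$. The orthogonal split into $C_1$ and $C_2$ is what allows most of the dependence to be factored out, reducing everything to textbook moment estimates. A secondary, more cosmetic difficulty is matching the precise constant $1+\sqrt n+\sqrt l$ in the statement, which requires using the sharper operator-norm bound $\mathbb{E}\|G_{n,l}\|\le\sqrt n+\sqrt l$ (rather than the crude Frobenius estimate $\sqrt{(n-r)l}$) in at least one of the two summands, together with a careful regrouping of the terms.
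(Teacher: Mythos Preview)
For part~(i), your argument coincides with the paper's. The paper isolates exactly the bound $\|(M_rB)^+\|\le\nu_{r,l}^+/\sigma_r(M)$ as its Theorem~\ref{thprm}: take the compact SVD $M_r=S_r\Sigma_rT_r^T$, observe by rotational invariance (Lemma~\ref{lepr3}) that $T_r^TB=G_{r,l}$ is an $r\times l$ Gaussian matrix, and bound the pseudoinverse norm of $S_r\Sigma_rG_{r,l}$. Substituting this into~(\ref{eqdlt}) together with $\|B\|_F=\nu_{F,n,l}$ yields part~(i), exactly as you outline.

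For part~(ii), you take a different and more careful route than the paper. The paper's derivation of part~(ii) is a single sentence: it ``combines'' part~(i) with the moment bounds $\mathbb E(\nu_{n,l})<1+\sqrt n+\sqrt l$ of Theorem~\ref{thsignorm}(ii) and $\mathbb E(\nu^+_{r,l})\le e\sqrt r/p$ of Theorem~\ref{thsiguna}(iii), together with the inequality $\nu_{F,n,l}\le\sqrt l\,\nu_{n,l}$, effectively multiplying the individual expectation bounds to obtain the stated expression. The statistical dependence between $\nu_{F,n,l}$ and $\nu^+_{r,l}$---both functions of the same Gaussian matrix $B$---is not addressed there. Your orthogonal split $B\mapsto(C_1,C_2)$ with $C_1=V_r^TB$ independent of $C_2=V_\perp^TB$, followed by Cauchy--Schwarz on the coupled $C_1$-block, actually confronts this dependence. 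The price is precisely what you flag at the end: the constants emerging from your decomposition do not naturally reassemble into $1+\sqrt n+\sqrt l$, and matching that specific form would require replacing the Frobenius estimate on $C_2$ by the spectral-norm bound on $B$ and regrouping. In short, the paper reaches its stated constant by a one-line step that glosses over the coupling; your longer argument handles the coupling honestly at the cost of a slightly different (but comparable) constant.
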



\begin{remark}\label{reopt}
$\sigma_{r+1}(M)$ is the optimal upper bound on the norm $\Delta$, and
the expected value $\mathbb E(f)$ 
is reasonably small even for $p=1$. 
If $p=0$, then $\mathbb E(f)$  is not defined, 
 but 
the random variable $\Delta$ estimated in Theorem
\ref{th1} is still likely to be reasonably close
to $\sigma_{r+1}(M)$ (cf. part (ii) of Theorem  \ref{thsiguna}).
\end{remark}


In Section \ref{sdlrnd} we  prove the following elaboration upon dual Theorem  \ref{th0d};
its upper bound on the output error norm of Algorithm \ref{alg1}
is a little smaller than the bound of primal 
 Theorem  \ref{th1}.

\begin{theorem}\label{th1d}
Suppose that Algorithm \ref{alg1}, applied to
  a small-norm perturbation 
of 
an $m\times n$
 fac\-tor-Gauss\-ian matrix with expected 
 rank $r<m$, uses  
an 
$n\times l$ multiplier $B$ such that $\nrank(B)=l$ and
$l\ge r$. 


(i) Then
 the algorithm outputs  
  a rank-$l$ matrix  $\tilde M$ that
 approximates the matrix $M$ within the  error norm bound
  $\Delta$ such that $|\Delta-\sigma_{r+1}(M)|\le f_d\sigma_{r+1}(M)+O(\sigma_{r+1}^2(M))$, where   
$f_d=\sqrt {8(n-r)l}~\nu^+_{r,l}\nu_{m,r}^+\kappa(B)$,
 $\kappa(B)=||B||~||B^+||$,
and  
 $\nu_{m,r}^+$ and $\nu^+_{r,l}$
are random variables 
of Definition \ref{defnrm}.


(ii) $\mathbb E(f_d)<e^2\sqrt{8(n-r)l}~\kappa(B)\frac{r}{(m-r)p}$, 
for  $p=l-r>0$  and  $e=2.71828\dots$.
\end{theorem}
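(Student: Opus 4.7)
The plan is to run the same pipeline as in the proof of the primal Theorem \ref{th1}, but with the roles of matrix and multiplier exchanged. Starting from the master inequality (\ref{eqdlt}) of Theorem \ref{thrrap},
\[
|\Delta-\sigma_{r+1}(M)|\le \sqrt {8(n-r)}\,\sigma_{r+1}(M)\,\|B\|_F\,\|(M_rB)^+\|+O(\sigma_{r+1}^2(M)),
\]
everything reduces to an upper bound on $\|B\|_F\,\|(M_rB)^+\|$. The Frobenius factor is disposed of by $\|B\|_F\le \sqrt l\,\|B\|$ (since $\nrank(B)=l$), so the work is concentrated on $\|(M_rB)^+\|$.

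Since $M-E\in\mathcal{G}_{m,n,r}$ with $\|E\|\approx 0$, write $M=UV+E$ with $U\in\mathbb{R}^{m\times r}$ and $V\in\mathbb{R}^{r\times n}$ independent Gaussian factors. Theorem \ref{thrnd} gives $\rank(U)=\rank(V)=r$ a.s., and the best rank-$r$ truncation satisfies $\|M_r-UV\|=O(\|E\|)$ by standard SVD perturbation, so $(M_rB)^+=(UVB)^++O(\|E\|)$; this perturbation is absorbed into the $O(\sigma_{r+1}^2(M))$ term. With $U$ of full column rank $r$ and $VB$ of full row rank $r$ (a.s., see below), the reverse-order identity for pseudoinverses applies and yields
\[
\|(UVB)^+\|=\|(VB)^+U^+\|\le \|(VB)^+\|\,\|U^+\|=\|(VB)^+\|\,\nu_{m,r}^+.
\]

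To bound $\|(VB)^+\|$ I would exploit rotational invariance. Let $B=\bar V\bar\Sigma\bar U^T$ be the thin SVD of $B$, so that $\bar V\in\mathbb R^{n\times l}$ has orthonormal columns, $\bar\Sigma$ and $\bar U$ are $l\times l$ with $\bar\Sigma$ positive diagonal and $\bar U$ orthogonal, and $\sigma_l(\bar\Sigma)=1/\|B^+\|$. Then $VB=(V\bar V)\,\bar\Sigma\bar U^T$, and the multiplicative singular-value inequality with an invertible right factor gives
\[
\sigma_r(VB)\ge \sigma_r(V\bar V)\,\sigma_l(\bar\Sigma\bar U^T)=\sigma_r(V\bar V)/\|B^+\|,
\]
so $\|(VB)^+\|\le \|B^+\|\,\|(V\bar V)^+\|$. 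Because $V$ is Gaussian and $\bar V$ has orthonormal columns, $V\bar V\in\mathbb{R}^{r\times l}$ is itself a standard $r\times l$ Gaussian (by rotational invariance of the rows of $V$), so $\|(V\bar V)^+\|$ is distributed as $\nu^+_{r,l}$ and is finite a.s.\ because $l\ge r$. Combining these facts,
\[
\|B\|_F\,\|(M_rB)^+\|\le \sqrt l\,\|B\|\,\|B^+\|\,\nu^+_{r,l}\,\nu^+_{m,r}=\sqrt l\,\kappa(B)\,\nu^+_{r,l}\,\nu^+_{m,r},
\]
which inserted into (\ref{eqdlt}) yields part (i) with $f_d=\sqrt{8(n-r)l}\,\nu^+_{r,l}\,\nu^+_{m,r}\,\kappa(B)$.

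For part (ii), the key point is that $U$ and $V$ are independent, hence so are $\nu^+_{m,r}$ (a function of $U$) and $\nu^+_{r,l}$ (a function of $V\bar V$, and therefore of $V$). Thus $\mathbb E(f_d)=\sqrt{8(n-r)l}\,\kappa(B)\,\mathbb E(\nu^+_{r,l})\,\mathbb E(\nu^+_{m,r})$, and I would plug in the standard HMT-type bounds for the expected spectral norm of pseudoinverses of Gaussian matrices (cf.\ Appendix and \cite[Prop.~10.2]{HMT11}), namely $\mathbb E(\nu^+_{r,l})\le e\sqrt r/p$ and $\mathbb E(\nu^+_{m,r})\le e\sqrt r/(m-r)$, to obtain the advertised bound $e^2\sqrt{8(n-r)l}\,\kappa(B)\,r/((m-r)p)$. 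The main obstacle will be the perturbation step from $M_r$ to $UV$: while intuitively clear, one must argue cleanly that $\|(M_rB)^+\|\le \|(UVB)^+\|+O(\|E\|)$ in a regime where both matrices are close to rank $r$, so that the extra error is genuinely of order $\sigma_{r+1}^2(M)$ and not lower-order. After that, the rotational-invariance argument and the multiplicative singular-value inequality carry the rest, in exact parallel to the primal case.
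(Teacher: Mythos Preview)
Your proposal is correct and follows essentially the same route as the paper: the paper combines compact SVDs of $U$ and $B$ in one step to write $UVB=S_UFT_B^T$ with $F=\Sigma_U G_{r,l}\Sigma_B$ (where $G_{r,l}=T_U^TVS_B$ is Gaussian by rotational invariance), then reads off $\|(UVB)^+\|\le \|U^+\|\,\nu_{r,l}^+\,\|B^+\|$, while you reach the identical bound via the reverse-order law and the multiplicative singular-value inequality applied to the SVD of $B$. The perturbation from $M_r$ to $UV$ is handled in the paper just as briefly as you do (it invokes the matrix perturbation Theorem~\ref{thpert}), and your explicit appeal to the independence of $U$ and $V$ to factor $\mathbb E(\nu_{m,r}^+\nu_{r,l}^+)$ is exactly what underlies the paper's part~(ii).
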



\begin{remark}\label{redl}
The expected value $\mathbb E(\nu_{m,r}^+)=\frac{e\sqrt r}{m-r}$ converges to 0 as $m\rightarrow \infty$
provided that $r\ll m$. 
Consequently the expected value $\mathbb E(\Delta)=\sigma_{r+1}(M)\mathbb E(f_d)$ 
converges to the optimal value $\sigma_{r+1}(M)$
as $\frac{m}{r\sqrt {nl}}\rightarrow \infty$ provided 
that $B$ is a well-conditioned matrix
of full rank and that $1\le r<l\ll n\le m$. 
\end{remark}


\begin{remark}\label{repfr} 
 \cite[Theorem 10.8]{HMT11} also 
estimates the norm $\Delta$, but our estimate in Theorem \ref{th1}, 
in terms of random variables $\nu_{F,n,l}$ and $\nu^+_{r,l}$,
is more compact, and 
our proof is distinct and shorter than one
 in \cite{HMT11}, which involves the proofs of  \cite[Theorems 9.1, 10.4 and 10.6]{HMT11}.
\end{remark}

\begin{remark}\label{reprob}  
By virtue of 
Theorems \ref{thrnd}, $\rank(M_rB)=r$
with probability 1 if the matrix $B$ or $M$ is Gaussian,
which is the case of Theorems \ref{th1} and \ref{th1d},
and under the equation  $\rank(M_rB)=r$ we proved bound (\ref{eqdlt}).
\end{remark}

In the next two subsections
 we deduce reasonable bounds on the  norm $||(M_rB)^+||$ in both cases where 
 $M$ is a fixed  matrix and $B$ is a Gaussian matrix 
and where $B$ is fixed  matrix and $M$ is a factor Gaussian matrix 
(cf. Theorems \ref{thprm} and \ref{thdual}).
The bounds imply Theorems \ref{th1} and \ref{th1d}.


\subsection{Primal theorem: completion of the proof}\label{sprrnd}






\begin{theorem}\label{thprm} 
For $M\in \mathbb R^{m\times n}$,
 $B\in \mathcal G^{m\times l}$,
and $\nu_{r,l}^+$ 
of Definition \ref{defnrm},
it holds that
  \begin{equation}\label{eqn+}
||(M_rB)^+||\le \nu_{r,l}^+/\sigma_r(M). 
\end{equation}
\end{theorem}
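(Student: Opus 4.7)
The plan is to reduce the pseudoinverse of $M_r B$ to the pseudoinverse of a smaller Gaussian matrix by using the SVD of $M_r$ together with the orthogonal invariance of the Gaussian distribution.

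First I would fix a (thin) SVD of the rank-$r$ truncation, $M_r = U_r \Sigma_r V_r^T$, where $U_r \in \mathbb{R}^{m\times r}$ has orthonormal columns, $\Sigma_r = \mathrm{diag}(\sigma_1(M),\dots,\sigma_r(M))$ is $r\times r$ invertible (assuming $\sigma_r(M)>0$, which is the only interesting case), and $V_r \in \mathbb{R}^{n\times r}$ has orthonormal columns. Setting $G := V_r^T B$, the orthogonal invariance of the Gaussian distribution (applied to the orthonormal rows of $V_r^T$) implies that $G$ is an $r\times l$ Gaussian matrix. Hence by Theorem \ref{thrnd}, $G$ has full row rank $r$ with probability one. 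Thus we can write
\begin{equation*}
M_r B \;=\; (U_r \Sigma_r)\, G,
\end{equation*}
expressing $M_r B$ as a product of a matrix of full column rank $r$ and a matrix of full row rank $r$.

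Next I would invoke the standard pseudoinverse factorization: if $A = X Y$ with $X$ of full column rank and $Y$ of full row rank, both equal to $\mathrm{rank}(A)$, then $A^+ = Y^+ X^+$. Applied twice, this gives
\begin{equation*}
(M_r B)^+ \;=\; G^+ (U_r \Sigma_r)^+ \;=\; G^+\, \Sigma_r^{-1}\, U_r^T,
\end{equation*}
since $\Sigma_r$ is invertible and $U_r^T U_r = I_r$. Taking spectral norms and using submultiplicativity together with $\|U_r^T\|=1$ and $\|\Sigma_r^{-1}\|=1/\sigma_r(M)$ yields
\begin{equation*}
\|(M_r B)^+\| \;\le\; \|G^+\|\,\|\Sigma_r^{-1}\|\,\|U_r^T\| \;=\; \frac{\|G^+\|}{\sigma_r(M)}.
\end{equation*}
Since $G \in \mathcal{G}^{r\times l}$, the random variable $\|G^+\|$ is precisely $\nu_{r,l}^+$ in the notation of Definition \ref{defnrm}, giving the claim.

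The only potentially delicate step is justifying that the distribution of $V_r^T B$ is Gaussian — this rests on the invariance of the standard Gaussian under multiplication by a matrix with orthonormal rows, which is standard — and checking that the product-pseudoinverse identity is applicable here, which requires exactly the full-rank conditions secured above. Everything else is bookkeeping with the SVD and submultiplicativity of the spectral norm.
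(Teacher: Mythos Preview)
Your proof is correct and follows essentially the same route as the paper: take the compact SVD of $M_r$, use rotational invariance (the paper's Lemma~\ref{lepr3}) to identify $V_r^T B$ as an $r\times l$ Gaussian matrix, and then bound the pseudoinverse norm submultiplicatively. The only cosmetic difference is that the paper first builds a compact SVD of $M_rB$ to obtain $\|(M_rB)^+\|=\|(\Sigma_r G_{r,l})^+\|$ and then bounds this product, whereas you invoke the full-rank product identity $(XY)^+=Y^+X^+$ directly.
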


\begin{proof}
Let $M_r=S_r\Sigma_rT_r^T$ be compact SVD.

By applying  Lemma \ref{lepr3}, deduce that $T_r^TB$
is a $r\times l$ Gaussian matrix. 

Denote it $G_{r,l}$ 
and obtain
$M_rB=S_r\Sigma_rT_r^TB=S_r\Sigma_rG_{r,l}$.

Write $H=\Sigma_rG_{r,l}$ and let $H=S_H\Sigma_HT_H^T$ be compact  SVD
where $S_H$ is a $r\times r$ unitary matrix.

It follows that $S=S_rS_H$ is an $m\times r$ unitary matrix.

Hence
$M_rB=S\Sigma_HT_H^T$ and $(M_rB)^+=T_H(\Sigma_H)^+S^T$
are compact SVDs of the matrices $M_rB$ and $(M_rB)^+$, respectively.

Therefore 
$||(M_rB)^+||=||(\Sigma_H)^+||=
||(\Sigma_rG_{r,l})^+||\le ||G_{r,l}^+||~||\Sigma_r^{-1}||$.

Substitute $||G_{r,l}^+||=\nu_{r,l}^+$ and $||\Sigma_r^{-1}||=1/\sigma_r(M)$
and obtain the theorem.
\end{proof}

  
Combine bounds (\ref{eqdlt}), (\ref{eqn+}),  and equation $||B||_F=\nu_{F,n,l}$
and obtain part (i) of Theorem \ref{th1}.
Combine that part with parts (ii) of Theorem \ref{thsignorm} and
(iii) of  Theorem \ref{thsiguna} 
 and obtain part (ii) of Theorem \ref{th1}.


\subsection{ Dual  theorem: completion of the proof}\label{sdlrnd}


\begin{theorem}\label{thdual}
Suppose that   
$U\in \mathbb R^{m\times r}$,
 $V\in \mathcal G^{r\times n}$, $\rank(U)=r\le \min\{m,n\}$,
$M=UV$,
and $B$ is 
a well-conditioned $n\times l$ matrix 
of full rank $l$ such that $m\ge n>l\ge r$
and $||B||_F=1$. 
Then 
 \begin{equation}\label{eqdlt1}
 ||(MB)^+|| \le ||U^+||~\nu_{r,l}^+~||B^+||.
\end{equation}
If in addition  $U\in \mathcal G^{m\times r}$, that is, if
$M$   is an $m\times n$ fac\-tor-Gauss\-ian matrix with expected rank $r$, then
 \begin{equation}\label{eqmb+}
||(MB)^+|| \le \nu_{m,r}^+~\nu_{r,l}^+~||B^+||.
\end{equation}
\end{theorem}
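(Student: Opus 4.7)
The plan is to factor $MB = U \cdot (VB)$ and bound each factor's contribution to the pseudo-inverse separately, paralleling the compact-SVD strategy used for Theorem \ref{thprm} but now exploiting the Gaussian structure of $V$ rather than of $B$. First I would verify that the standard product formula for the Moore–Penrose pseudo-inverse applies here, namely
$$
(MB)^+ \;=\; (VB)^+ \, U^+,
$$
which requires $U$ to have full column rank $r$ (given by hypothesis) and $VB$ to have full row rank $r$. The latter will be established as a by-product of the next step, which shows that $VB$ has the distribution of a Gaussian matrix multiplied by an invertible matrix, so that it has rank $r$ with probability one. Granted this identity, we immediately obtain $\|(MB)^+\| \le \|(VB)^+\|\cdot\|U^+\|$.

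To handle $\|(VB)^+\|$, I would introduce the compact SVD $B = S_B \Sigma_B T_B^T$, where $S_B \in \mathbb{R}^{n\times l}$ is column-orthonormal, $\Sigma_B$ is $l\times l$ diagonal and invertible (since $B$ has full rank $l$), and $T_B$ is $l\times l$ orthogonal. Writing $VB = (VS_B)\,\Sigma_B T_B^T$, Lemma \ref{lepr3} gives that $VS_B$ has the distribution of an $r\times l$ Gaussian matrix $G_{r,l}$, because $V$ is Gaussian and $S_B$ is column-orthonormal. This both justifies the rank claim on $VB$ (since $G_{r,l}$ has rank $r$ a.s.\ and $\Sigma_B T_B^T$ is nonsingular) and yields the factorization $(VB)^+ = T_B \Sigma_B^{-1}(VS_B)^+$. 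Submultiplicativity of the spectral norm then delivers
$$
\|(VB)^+\| \;\le\; \|\Sigma_B^{-1}\|\cdot\|(VS_B)^+\| \;=\; \|B^+\|\cdot\nu_{r,l}^+,
$$
by the definition of $\nu_{r,l}^+$ in Definition \ref{defnrm}. Combining with the previous paragraph gives bound (\ref{eqdlt1}). For bound (\ref{eqmb+}), I would simply observe that when $U \in \mathcal{G}^{m\times r}$, the quantity $\|U^+\|$ has by definition the distribution $\nu_{m,r}^+$, so substituting into (\ref{eqdlt1}) yields the result.

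The main technical obstacle I anticipate is not the algebra but the careful justification of the product formula for pseudo-inverses: it can fail without both full-rank hypotheses, so I would need to make the rank-$r$ property of $VB$ explicit before invoking it, and this is exactly why the Gaussian structure of $V$ (through Lemma \ref{lepr3}) is essential rather than cosmetic. Apart from this, the argument is a clean dualization of the proof of Theorem \ref{thprm}, with $B$'s SVD playing the role previously played by the compact SVD of $M_r$ and with $V$ absorbing the Gaussian randomness previously supplied by $B$.
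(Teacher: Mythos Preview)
Your proposal is correct and follows essentially the same strategy as the paper: use the compact SVD of $B$ together with rotational invariance (Lemma \ref{lepr3}) to realize $VS_B$ as an $r\times l$ Gaussian matrix, then bound $\|(MB)^+\|$ by the product $\|U^+\|\,\nu_{r,l}^+\,\|B^+\|$ via submultiplicativity. The only organizational difference is that the paper also takes a compact SVD of $U$ and builds a compact SVD of $MB$ explicitly (so that $\|(MB)^+\|=\|F^+\|$ for $F=\Sigma_U G_{r,l}\Sigma_B$), thereby sidestepping any appeal to the Moore--Penrose product formula; you instead invoke $(MB)^+=(VB)^+U^+$ directly, which is valid under exactly the full-rank hypotheses you identify and yields the bound a little more quickly.
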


\begin{proof}
Combine compact
SVDs  
$U=S_{U} \Sigma_{U}T_{U}^T$ 
and  $B=S_{B} \Sigma_{B}T_{B}^T$
and obtain $UVB=
S_{U} \Sigma_{U}T^T_{U}VS_B\Sigma_{B}T_{B}^T$.
Here $U$, $V$, $B$,  $S_U$, $\Sigma_U$,  $T_U$, $S_B$, 
$\Sigma_B$, and $T_B$ are matrices of the sizes $m\times r$, 
$r\times n$, $n\times l$, $m\times r$, $r\times r$, $r\times r$, 
$n\times l$, $l\times l$, and $l\times l$, respectively.

Now observe that $G_{r,l}=T_{U}^TVS_{B}$ is a $r\times l$
Gaussian matrix,
by  virtue of Lemma \ref{lepr3} (since $V=G_{r,n}$ is a Gaussian matrix).
Therefore $UVB=S_{U} FT_{B}^T$, for $F= \Sigma_{U}G_{r,l}\Sigma_{B}$.

Let $F=S_F\Sigma_F T_F^T$ denote compact SVD
where $\Sigma_F=\diag (\sigma_j(F))_{j=1}^r$
and $S_F$ and $T_F^T$ are unitary matrices of sizes 
$r\times r$ and $r\times l$,  respectively.

Both products $S_{U}S_F\in \mathbb R^{m\times r}$ and 
$T_F^TT_{B}^T\in \mathbb R^{r\times l}$ are unitary matrices,
 and we obtain compact SVD 
$MB=UVB=S_{MB}\Sigma_{MB}T^T_{MB}$
where $S_{MB}=S_{U}S_F$, $\Sigma_{MB}=\Sigma_F$, 
and $T^T_{MB}=T_F^TT_{B}^T$.
Therefore
  $$||(MB)^+||=||\Sigma^+_{MB}||=||\Sigma^+_F||= ||F^+||.$$
Note that $F^+=\Sigma_B^{-1}G_{r,l}^+\Sigma_U^{-1}$
because $\Sigma_B$ and $\Sigma_V$ are square nonsingular diagonal matrices.
It follows that
$$||F^+||\le ||\Sigma_B^{-1}||~||G_{r,l}^+||~||\Sigma_U^{-1}||=||B^+||\nu_{r,l}^+||V^+||.$$
Consequently
$||(MB)^+||=||\Sigma^+_F||\le ||U^+||\nu_{r,l}^+||B^+||$. Now (\ref{eqdlt1}) follows
and implies (\ref{eqmb+}).
 \end{proof}

Combine (\ref{eqdlt}), (\ref{eqdlt1})
and $||B||_F\le ||B||~\sqrt l$ and  obtain
 Theorem \ref{th1d} provided that
$M$ is a fac\-tor-Gauss\-ian matrix $UV$ with expected rank $r$.
Apply Theorem \ref{thpert} to extend the results 
to the case where  
$M=UV+E$ and the norm $||E||$ is small, completing the proof of 
Theorem \ref{th1d}.


\begin{remark}\label{regnrl} 
If $U\in \mathcal G^{m\times r}$, for $m-r\ge 4$, then
it is likely that
$\nrank(U)=r$ by virtue of Theorem \ref{thsiguna},
and our proof of bound (\ref{eqdlt1}) applies even if  we assume that
$\nrank(U)=r$ rather than $U\in \mathcal G^{m\times r}$.
\end{remark}


\section{Numerical Tests}\label{ststs}


 Numerical experiments have been 
 performed by  Xiaodong Yan for Tables  
\ref{tab67}--\ref{tab614}
and by John Svadlenka and Liang Zhao for the other tables.
The tests have been run by using MATLAB  
 in the Graduate Center of the City University of New York 
on a Dell computer with the Intel Core 2 2.50 GHz processor and 4G memory running 
Windows 7, 
and in particular the standard normal distribution function randn of MATLAB
has been applied in order to generate Gaussian matrices.

We calculated the $\xi$-rank, i.e., the number of singular values 
exceeding $\xi$, by applying the MATLAB function "svd()". 
We have set $\xi=10^{-5}$ in Sections \ref{ststssvd} and
 \ref{ststslo} and  $\xi=10^{-6}$
in Section \ref{s17m}.


\subsection{Tests for inputs generated via SVD}\label{ststssvd}


In the tests of this subsection we generated $n\times n$ input matrices $M$  
 by extending the customary recipes of [H02, Section 28.3]. Namely, we first  
generated matrices $S_M$ and $T_M$ 
by means of the orthogonalization of  
$n\times n$ Gaussian matrices. Then we defined
$n\times n$ matrices $M$ by 
their compact SVDs, $M=S_M\Sigma_M T_M^T$,
for $\Sigma_M=\diag(\sigma_j)_{j=1}^n$; 
 $\sigma_j=1/j,~j=1,\dots,r$,
$\sigma_j=10^{-10},~j=r+1,\dots,n$, 
 and  $n=256,
512,
1024$.
(Hence $||M||=1$ and 
$\kappa(M)=||M||~||M^{-1}||=10^{10}$.) 

Table \ref{tab1} shows
the average output error norm $\Delta$ 
over  1000 tests of  Algorithm \ref{alg1} applied to 
these matrices $M$ for 
each pair of $n$ and $r$,
  $n=256,
512,
1024$, $r=8,32$, and
each of the following three groups of multipliers:
3-AH multipliers, 
3-ASPH  multipliers, both
 defined by 
 Hadamard recursion (\ref{eqfd}),  
 for $d=3$, and 
dense  multipliers $B=B(\pm 1,0)$  
having i.i.d. entries $\pm 1$ and 0,
each value chosen with probability 1/3.


\begin{table}[ht] 
  \caption{Error norms for SVD-generated inputs 
and 3-AH, 3-ASPH,  and $B(\pm 1,0)$ multipliers}
\label{tab1}

  \begin{center}
    \begin{tabular}{|*{8}{c|}}
      \hline
$n$ & $r$  &3-AH&3-ASPH& $B(\pm 1,0)$ 
\\ \hline
256 & 8 & 2.25e-08 & 2.70e-08 & 2.52e-08 
\\\hline
256 & 32 &5.95e-08 & 1.47e-07 & 3.19e-08 
\\\hline
512 & 8 &4.80e-08 & 2.22e-07 & 4.76e-08 
\\\hline
512 & 32 & 6.22e-08 & 8.91e-08 & 6.39e-08 
\\\hline
1024 & 8 & 5.65e-08 & 2.86e-08 & 1.25e-08
\\\hline
1024 & 32 & 1.94e-07 & 5.33e-08 & 4.72e-08
\\\hline
    \end{tabular}
  \end{center}
\end{table}


Tables \ref{tab67}--\ref{tab614} show  the  mean and maximal values 
 of such an error norm in the case of   
(a) real Gaussian multipliers $B$ and dense real Gaussian subcirculant multipliers $B$
of  Section \ref{sdfcndf}, for $q=n$, each  defined by its first column filled with 
either (b) i.i.d.  Gaussian
variables or (c) random variables
  $\pm 1$. 
Here 
and hereafter in this section we assigned each random 
signs $+$ or $-$ with probability 0.5.


\begin{table}[ht] 
  \caption{Error norms for SVD-generated inputs and
Gaussian multipliers}
\label{tab67}
  \begin{center}
    \begin{tabular}{| c |  c | c |  c |c|}
      \hline
$r$   & $n$ & \bf{mean} & \bf{max} \\ \hline	
8 & 256 & $ 7.54\times 10^{-8} $  &  $ 1.75\times 10^{-5} $    \\ \hline		
8 & 512 &  $ 4.57\times 10^{-8} $  &  $ 5.88\times 10^{-6} $  \\ \hline		
8 & 1024 &  $ 1.03\times 10^{-7} $  &  $ 3.93\times 10^{-5} $    \\ \hline		
32 & 256 & $ 5.41\times 10^{-8} $  &  $ 3.52\times 10^{-6} $     \\ \hline
32 & 512 & $ 1.75\times 10^{-7} $  &  $ 5.57\times 10^{-5} $   \\ \hline		
32 & 1024 & $ 1.79\times 10^{-7} $  &  $ 3.36\times 10^{-5} $    \\ \hline	
    \end{tabular}
  \end{center}
\end{table}


\begin{table}[ht] 
  \caption{Error norms  for SVD-generated inputs  and
Gaussian  subcirculant multipliers}
\label{tab611}
  \begin{center}
    \begin{tabular}{| c | c | c | c | c | c |c|}     \hline
$r$  &  $n$ & \bf{mean} & \bf{max}  \\ \hline
8 & 256 & $ 3.24\times 10^{-8} $  &  $ 2.66\times 10^{-6} $   \\ \hline		
8 & 512 &  $ 5.58\times 10^{-8} $  &  $ 1.14\times 10^{-5} $  \\ \hline		
8 & 1024 &  $ 1.03\times 10^{-7} $  &  $ 1.22\times 10^{-5} $  \\ \hline		
32 & 256 & $ 1.12\times 10^{-7} $  &  $ 3.42\times 10^{-5} $   \\ \hline
32 & 512 & $ 1.38\times 10^{-7} $  &  $ 3.87\times 10^{-5} $   \\ \hline		
32 & 1024 & $ 1.18\times 10^{-7} $  &  $ 1.84\times 10^{-5} $  \\ \hline	
    \end{tabular}
  \end{center}
\end{table}


\begin{table}[ht] 
  \caption{Error norms for SVD-generated inputs and
random subcirculant
  multipliers  filled with $\pm 1$}
\label{tab614}
  \begin{center}
    \begin{tabular}{| c | c | c | c | c | c |c|}     \hline
$r$  &  $n$ & \bf{mean} & \bf{max} \\ \hline			
8 & 256 & $ 7.70\times 10^{-9} $  &  $ 2.21\times 10^{-7} $   \\ \hline		
8 & 512 &  $ 1.10\times 10^{-8} $  &  $ 2.21\times 10^{-7} $  \\ \hline		
8 & 1024 &  $ 1.69\times 10^{-8} $  &  $ 4.15\times 10^{-7} $   \\ \hline		
32 & 256 & $ 1.51\times 10^{-8} $  &  $ 3.05\times 10^{-7} $   \\ \hline
32 & 512 & $ 2.11\times 10^{-8} $  &  $ 3.60\times 10^{-7} $   \\ \hline		
32 & 1024 & $ 3.21\times 10^{-8} $  &  $ 5.61\times 10^{-7} $  \\ \hline		
    \end{tabular}
  \end{center}
\end{table}

 

Table \ref{LowRkEx} displays  the
average error norms in
the case of multipliers $B$
of eight kinds defined below, all  
generated from the following  Basic Sets 1, 2 and 3
of $n\times n$ multipliers:

{\em Basic Set 1}:  3-APF
multipliers defined by 
three Fourier recursive steps of
 equation (\ref{eqfd}), for $d=3$,
with no scaling, but  
with a random column permutation.

{\em Basic Set 2}: Sparse real circulant matrices $Z_1({}\bf v)$
of family (ii) of  Section \ref{sdfcndf}, for $q=10$, 
whose  first column vectors  ${\bf v}$ have been filled with zeros,
except for  ten random coordinates filled with random integers $\pm 1$.

{\em Basic Set 3}:   Sum of two scaled inverse bidiagonal matrices. 
We first filled the main diagonals of both matrices with the integer 101
 and  their first subdiagonals 
 with $\pm 1$. Then
we  multiplied  each matrix by a  diagonal 
matrix $\diag(\pm 2^{b_i})$, where $b_i$ were random integers
uniformly chosen from 0 to 3.

For multipliers $B$ we used the $n\times r$  western 
(leftmost) blocks of $n\times n$ matrices
of the following classes:
\begin{enumerate}
\item
  a matrix from Basic Set 1; 
\item
  a matrix from Basic Set 2;
\item
 a matrix from Basic Set 3;
\item
 the product of two matrices of Basic Set 1;
\item
 the product of two matrices of Basic Set 2;
\item
 the product of two matrices of Basic Set 3;
\item
 the sum of two matrices of Basic Sets 1 and 3,
and 
\item
 the sum of two matrices of Basic Sets 2 and 3.
\end{enumerate}
The tests
produced the results similar to the ones of Tables \ref{tab1}--\ref{tab614}.
 
In sum, for all classes of input  pairs $M$ and $B$ and all pairs of integers $n$ and $r$,
Algorithm \ref{alg1} with our preprocessing 
has consistently output approximations to rank-$r$ input matrices with  the
average error norms 
 ranged from $10^{-7}$ or $10^{-8}$ to about $10^{-9}$
in all our tests.

\begin{table}[ht] 
  \caption{Error norms  for SVD-generated inputs  and
multipliers of eight classes}
\label{LowRkEx}
  \begin{center}
    \begin{tabular}{| c |  c | c |  c |c|c|c|c|c|c|}
      \hline
$n$ & $r$ &class 1 &class 2 &class 3 &class 4 &class 5 &class 6 &class 7 &class 8 \\\hline
256 & 8 &5.94e-09 &4.35e-08 &2.64e-08 &2.20e-08 &7.73e-07 &5.15e-09 &4.08e-09 &2.10e-09  \\\hline
256 & 32 &2.40e-08 &2.55e-09 &8.23e-08 &1.58e-08 &4.58e-09 &1.36e-08 &2.26e-09 &8.83e-09 \\\hline 
512 & 8 &1.11e-08 &8.01e-09 &2.36e-09 &7.48e-09 &1.53e-08 &8.15e-09 &1.39e-08 &3.86e-09  \\\hline
512 & 32 &1.61e-08 &4.81e-09 &1.61e-08 &2.83e-09 &2.35e-08 &3.48e-08 &2.25e-08 &1.67e-08\\\hline 
1024 & 8 &5.40e-09 &3.44e-09 &6.82e-08 &4.39e-08 &1.20e-08 &4.44e-09 &2.68e-09 &4.30e-09 \\\hline 
1024 & 32 &2.18e-08 &2.03e-08 &8.72e-08 &2.77e-08 &3.15e-08 &7.99e-09 &9.64e-09 &1.49e-08\\\hline 
    \end{tabular}
  \end{center}
\end{table}


  We summarize the  results of the tests of this subsection for $n=1024$ and $r=8,32$
in Figure \ref{LowRkTest1}.

\begin{figure}[htb] 
\centering
\caption{Error norms in the tests of Section \ref{ststssvd}}
\label{LowRkTest1}
\end{figure}

 

 


\subsection{Tests for inputs generated via the discretization of a Laplacian operator
and via the approximation of an inverse finite-difference operator}\label{ststslo}


Next we present the test results for Algorithm \ref{alg1} applied
 to input matrices for computational problems of two kinds,
both replicated from  \cite{HMT11}, namely, the matrices of
 
(i) the discretized single-layer Laplacian operator and 

(ii) the approximation of the inverse of a finite-difference operator.

{\em Input matrices (i).} We considered the Laplacian operator
$[S\sigma](x) = c\int_{\Gamma_1}\log{|x-y|}\sigma(y)dy,x\in\Gamma_2$,
from  \cite[Section 7.1]{HMT11},
for two contours $\Gamma_1 = C(0,1)$ and $\Gamma_2 = C(0,2)$  on the complex plane.
Its dscretization defines an $n\times n$ matrix $M=(m_{ij})_{i,j=1}^n$  
where
$m_{i,j} = c\int_{\Gamma_{1,j}}\log|2\omega^i-y|dy$
for a constant $c$ such that  $||M||=1$ and
for the arc $\Gamma_{1,j}$  of the contour $\Gamma_1$ defined by
the angles in $[\frac{2j\pi}{n},\frac{2(j+1)\pi}{n}]$.
We applied Algorithm \ref{alg1}
supported by  
 three iterations of the Power Scheme of Remark  \ref{rernlrpr} 
and used with multipliers
 $B$ being the $n\times r$ leftmost submatrices of $n\times n$
matrices of 
the following five  classes: 
\begin{itemize}
\item
 Gaussian  multipliers, 
\item
 Gaussian Toeplitz  multipliers $T=(t_{i-j})_{i=0}^{n-1}$ 
for i.i.d. Gaussian variables $t_{1-n},\dots,t_{-1}$,$t_0,t_1,\dots,t_{n-1}$.
  \item
 Gaussian circulant  multipliers $\sum_{i=0}^{n-1}v_iZ_1^i$, 
for i.i.d. Gaussian variables $v_0,\dots,v_{n-1}$ and the unit circular matrix $Z_1$
of Section \ref{sdfcnd}.  
\item
 Abridged permuted  Fourier (3-APF) multipliers, and 
\item
 Abridged permuted Hadamard (3-APH) multipliers.
\end{itemize}

As in the previous subsection,
we defined each 
 3-APF and 3-APH  matrix by applying
three recursive steps of equation (\ref{eqfd}) followed
by a single random column permutation.

 
We applied Algorithm \ref{alg1} with  multipliers of all five listed classes.
For each setting we repeated the test 1000 times and calculated the mean and standard deviation of the error norm $||\tilde M - M||$. 
  
\begin{table}[h]
\caption{Low-rank approximation  of Laplacian  matrices}
\label{ExpHMT1}
\begin{center}
\begin{tabular}{|*{5}{c|}}
\hline
$n$ 	& multiplier 	& $r$	& mean	& std\\\hline
200 & Gaussian &  3.00 & 1.58e-05 & 1.24e-05\\\hline
200 & Toeplitz &  3.00 & 1.83e-05 & 7.05e-06\\\hline
200 & Circulant &  3.00 & 3.14e-05 & 2.30e-05\\\hline
200 & 3-APF &  3.00 & 8.50e-06 & 5.15e-15\\\hline
200 & 3-APH &  3.00 & 2.18e-05 & 6.48e-14\\\hline
400 & Gaussian &  3.00 & 1.53e-05 & 1.37e-06\\\hline
400 & Toeplitz &  3.00 & 1.82e-05 & 1.59e-05\\\hline
400 & Circulant &  3.00 & 4.37e-05 & 3.94e-05\\\hline
400 & 3-APF &  3.00 & 8.33e-06 & 1.02e-14\\\hline
400 & 3-APH &  3.00 & 2.18e-05 & 9.08e-14\\\hline
2000 & Gaussian &  3.00 & 2.10e-05 & 2.28e-05\\\hline
2000 & Toeplitz &  3.00 & 2.02e-05 & 1.42e-05\\\hline
2000 & Circulant &  3.00 & 6.23e-05 & 7.62e-05\\\hline
2000 & 3-APF &  3.00 & 1.31e-05 & 6.16e-14\\\hline
2000 & 3-APH &  3.00 & 2.11e-05 & 4.49e-12\\\hline
4000 & Gaussian &  3.00 & 2.18e-05 & 3.17e-05\\\hline
4000 & Toeplitz &  3.00 & 2.52e-05 & 3.64e-05\\\hline
4000 & Circulant &  3.00 & 8.98e-05 & 8.27e-05\\\hline
4000 & 3-APF &  3.00 & 5.69e-05 & 1.28e-13\\\hline
4000 & 3-APH &  3.00 & 3.17e-05 & 8.64e-12\\\hline
\end{tabular}
\end{center}
\end{table}

{\em Input matrices (ii).} We similarly applied Algorithm \ref{alg1} to the input matrix $M$ 
being the inverse of a large sparse matrix obtained from a finite-difference operator
 from  \cite[Section 7.2]{HMT11} 
and observed  similar results
with all structured  and Gaussian multipliers.

We performed 1000 tests for every class of pairs of $n\times n$ or $m\times n$ matrices 
of classes (i) or (ii), respectively,
and 
$n\times r$ multipliers for every fixed triple of $m$, $n$, and $r$ or pair of $n$ and $r$.

Tables \ref{ExpHMT1} and \ref{ExpHMT2} display the resulting data for the mean values and standard deviation of the error norms, and we summarize the results of the tests of this subsection 
in Figure \ref{LowRkTest2}.

\begin{table}[h]
\caption{Low-rank approximation of the matrices of discretized finite-difference operator}
\label{ExpHMT2}
\begin{center}
\begin{tabular}{|*{6}{c|}}
\hline
$m$ 	& $n$ 	& multiplier 	& $r$	& mean	& std\\\hline
88 & 160 & Gaussian &  5.00 & 1.53e-05 & 1.03e-05\\\hline
88 & 160 & Toeplitz &  5.00 & 1.37e-05 & 1.17e-05\\\hline
88 & 160 & Circulant &  5.00 & 2.79e-05 & 2.33e-05\\\hline
88 & 160 & 3-APF &  5.00 & 4.84e-04 & 2.94e-14\\\hline
88 & 160 & 3-APH &  5.00 & 4.84e-04 & 5.76e-14\\\hline
208 & 400 & Gaussian & 43.00 & 4.02e-05 & 1.05e-05\\\hline
208 & 400 & Toeplitz & 43.00 & 8.19e-05 & 1.63e-05\\\hline
208 & 400 & Circulant & 43.00 & 8.72e-05 & 2.09e-05\\\hline
208 & 400 & 3-APF & 43.00 & 1.24e-04 & 2.40e-13\\\hline
208 & 400 & 3-APH & 43.00 & 1.29e-04 & 4.62e-13\\\hline
408 & 800 & Gaussian & 64.00 & 6.09e-05 & 1.75e-05\\\hline
408 & 800 & Toeplitz & 64.00 & 1.07e-04 & 2.67e-05\\\hline
408 & 800 & Circulant & 64.00 & 1.04e-04 & 2.67e-05\\\hline
408 & 800 & 3-APF & 64.00 & 1.84e-04 & 6.42e-12\\\hline
408 & 800 & 3-APH & 64.00 & 1.38e-04 & 8.65e-12\\\hline
\end{tabular}
\end{center}
\end{table} 


\begin{figure}[htb] 
\centering
\caption{Error norms in the tests of Section \ref{ststssvd}}
\label{LowRkTest2}
\end{figure}


\subsection{Tests with additional classes of multipliers}\label{s17m} 

In this subsection we display the mean values and standard deviations
of the  error norms observed 
when we repeated the tests of the two previous subsections 
for the same three classes of input matrices
 (that is, SVD-generated, Laplacian, and matrices obtained by discretization of 
 finite difference operators), but now we applied Algorithm \ref{alg1} with
  seventeen  additional classes of multipliers (besides its control application with
 Gaussian multipliers). 
 
We tested  Algorithm \ref{alg1} applied to $1024\times 1024$ SVD-generated input matrices having numerical nullity $r = 32$, to $400 \times 400$ Laplacian input matrices
having numerical nullity $r = 3$, 
and
to $408 \times 800$ matrices having numerical nullity $r = 64$ and
representing finite-difference inputs. 

Then again we repeated the tests 1000 times for each class of input matrices and each 
size of an input and a multiplier, and we display the resulting average error norms 
in Table \ref{SuperfastTable} and Figures \ref{SuperfastSVD}--\ref{SuperfastFD}.

We used multipliers defined as the seventeen sums of $n\times r$ matrices
of the following basic families:

\begin{itemize}
  \item
  3-ASPH matrices
\item
  3-APH matrices
\item
  Inverses of bidiagonal matrices 
\item
 Random permutation matrices
\end{itemize}

Here every 3-APH matrix has been defined by three Hadamard's recursive steps
(\ref{eqrfd}) followed by random permutation.
Every 3-ASPH matrix has been defined similarly, but also random scaling has 
been applied with a diagonal matrix $D=\diag(d_i)_{i=1}^n$
having the values of random i.i.d. variables $d_i$ uniformly chosen from the set
$\{1/4,1/2,1,2,4\}$.
 
We permuted all inverses of bidiagonal matrices except for Class 5 of multipliers.

Describing our multipliers we use the following acronyms and abbreviations:
``IBD" for ``the inverse of a bidiagonal",
``MD" for ``the main diagonal", ``SB" for ``subdiagonal", and ``SP" for ``superdiagonal".
We write ``MD$i$", ``$k$th SB$i$" and ``$k$th SP$i$" in order to denote
that the main diagonal, the $k$th subdiagonal, or  the $k$th superdiagonal 
of a bidiagonal matrix, respectively,
was filled with the integer $i$.

\begin{itemize}

\item Class 0:	Gaussian
\item Class 1:	Sum of a 3-ASPH  and two IBD matrices: \\
	B1 with MD$-1$  and  2nd SB$-1$ and
	B2 with MD$+1$ and 1st SP$+1$
\item Class 2:	Sum of a 3-ASPH  and two IBD matrices: \\
        B1 with MD$+1$ and 2nd SB$-1$  and  
        B2 with MD$+1$ and 1st SP$-1$
\item Class 3:	Sum of   a 3-ASPH  and two IBD matrices: \\
	B1 with MD$+1$ and  1st SB$-1$ and
	B2 with MD $+1$  and 1st SP$-1$ 
\item Class 4:	Sum of  a 3-ASPH  and two IBD matrices: \\
	 B1 with MD$+1$ and 1st SB$+1$ and
        B2 with MD$+1$ and 1st SP$-1$
\item Class 5:	Sum of  a 3-ASPH  and two IBD matrices:  \\
	B1 with MD$+1$ and 1st SB$+1$ and B2 with MD$+1$ and 1st SP$-1$
\item Class 6:	Sum of a 3-ASPH  and three IBD matrices:\\
	B1 with MD$-1$ and  2nd SB$-1$,
	B2 with MD$+1$ and 1st SP$+1$ and
	B3 with MD$+1$ and 9th SB$+1$
\item Class 7:	Sum of a 3-ASPH  and three IBD matrices:\\
	 B1 with  MD$+1$ and  2nd SB$-1$, 
         B2 with MD$+1$ and 1st SP$-1$, and
	B3 with  MD$+1$ and  8th SP$+1$
\item Class 8:	Sum of a 3-ASPH  and three IBD matrices:\\
	B1 with   MD$+1$ and 1st SB$-1$,
	B2 with   MD$+1$ and 1st SP$-1$, and
	B3 with   MD$+1$ and 4th  SB$+1$
\item Class 9:	Sum of a 3-ASPH  and three IBD matrices:\\
	B1 with   MD$+1$ and 1st SB$+1$,
	B2 with   MD$+1$ and 1st SP$-1$, and
	B3 with   MD$-1$ and 3rd SP$+1$
\item Class 10:	Sum of three IBD matrices:\\
	B1 with   MD$+1$ and 1st SB$+1$,
	 B2 with   MD$+1$ and 1st SP$-1$, and
	 B3 with   MD$-1$ and 3rd SP$+1$
\item Class 11:	Sum of a 3-APH  and three IBD matrices:\\
	 B1 with   MD$+1$ and  2nd SB$-1$,
	 B2 with   MD$+1$ and 1st SP$-1$, and
	 B3 with   MD$+1$ and  8th SP$+1$
\item Class 12:	Sum of a 3-APH  and two IBD matrices:\\
	 B1 with   MD$+1$ and 1st SB$-1$ and
	 B2 with   MD$+1$ and 1st SP$-1$
\item Class 13:	Sum of a 3-ASPH  and a permutation matrix
\item Class 14:	Sum of a 3-ASPH  and two permutation matrices
\item Class 15:	Sum of a 3-ASPH  and three permutation matrices
\item Class 16:	Sum of a 3-APH  and three  permutation matrices
\item Class 17:	Sum of a 3-APH  and two permutation matrices
\end{itemize}

\begin{figure}[htb] 
\centering
\caption{Relative Error Norm For SVD Matrices}
\label{SuperfastSVD}
\end{figure}

\begin{figure}[htb] 
\centering
\caption{Relative Error Norm For Lapacian Matrices}
\label{SuperfastLP}
\end{figure}

\begin{figure}[htb] 
\centering
\caption{Relative Error Norm For Finite-Difference Matrices}
\label{SuperfastFD}
\end{figure}

\begin{table}[htb] \label{SuperfastTable}
\begin{center}
\begin{tabular}{|c|c|c|c|c|c|c|}
\hline
			& \multicolumn{2}{|c|}{SVD Matrices} & \multicolumn{2}{|c|}{Laplacian Matrices} & \multicolumn{2}{|c|}{Finite Difference Matrices}\\\hline
 \text{Class No.} & \text{Mean} & \text{Std} & \text{Mean} & \text{Std} & \text{Mean} & \text{Std} \\\hline
Class 0	&	3.54E-09	&	3.28E-09	&	4.10E-14	&	2.43E-13	&	1.61E-06	&	1.35E-06\\\hline
Class 0	&	1.07E-08	&	3.82E-09	&	2.05E-13	&	1.62E-13	&	4.58E-06	&	9.93E-07\\\hline
Class 1	&	1.16E-08	&	6.62E-09	&	6.07E-13	&	5.20E-13	&	4.67E-06	&	1.04E-06\\\hline
Class 2	&	1.23E-08	&	5.84E-09	&	1.69E-13	&	1.34E-13	&	4.52E-06	&	1.01E-06\\\hline
Class 3	&	1.25E-08	&	1.07E-08	&	2.46E-13	&	3.44E-13	&	4.72E-06	&	9.52E-07\\\hline
Class 4	&	1.13E-08	&	6.09E-09	&	1.93E-13	&	1.48E-13	&	4.38E-06	&	8.64E-07\\\hline
Class 5	&	1.12E-08	&	8.79E-09	&	9.25E-13	&	2.64E-12	&	5.12E-06	&	1.29E-06\\\hline
Class 6	&	1.16E-08	&	7.42E-09	&	5.51E-13	&	5.35E-13	&	4.79E-06	&	1.12E-06\\\hline
Class 7	&	1.33E-08	&	1.00E-08	&	1.98E-13	&	1.30E-13	&	4.60E-06	&	9.52E-07\\\hline
Class 8	&	1.08E-08	&	4.81E-09	&	2.09E-13	&	3.60E-13	&	4.47E-06	&	8.57E-07\\\hline
Class 9	&	1.18E-08	&	5.51E-09	&	1.87E-13	&	1.77E-13	&	4.63E-06	&	9.28E-07\\\hline
Class 10	&	1.18E-08	&	6.23E-09	&	1.78E-13	&	1.42E-13	&	4.55E-06	&	9.08E-07\\\hline
Class 11	&	1.28E-08	&	1.40E-08	&	2.33E-13	&	3.44E-13	&	4.49E-06	&	9.67E-07\\\hline
Class 12	&	1.43E-08	&	1.87E-08	&	1.78E-13	&	1.61E-13	&	4.74E-06	&	1.19E-06\\\hline
Class 13	&	1.22E-08	&	1.26E-08	&	2.21E-13	&	2.83E-13	&	4.75E-06	&	1.14E-06\\\hline
Class 14	&	1.51E-08	&	1.18E-08	&	3.57E-13	&	9.27E-13	&	4.61E-06	&	1.08E-06\\\hline
Class 15	&	1.19E-08	&	6.93E-09	&	2.24E-13	&	1.76E-13	&	4.74E-06	&	1.09E-06\\\hline
Class 16	&	1.26E-08	&	1.16E-08	&	2.15E-13	&	1.70E-13	&	4.59E-06	&	1.12E-06\\\hline
Class 17	&	1.31E-08	&	1.18E-08	&	1.25E-14	&	5.16E-14	&	1.83E-06	&	1.55E-06\\\hline

\end{tabular}
\caption{Relative Error Norm with Superfast Multipliers}
\end{center}
\end{table}

The tests show quite accurate outputs even where we applied Algorithm \ref{alg1}
with very sparse multipliers of classes 13--17.


\clearpage

 
\section{Conclusions: Three Extensions}\label{sext}


Our duality techniques for  the average inputs 
can  be extended to the acceleration of various 
matrix computations.
In this concluding section we describe   
 extensions to  three highly important areas.

 
\subsection{Acceleration of the computation for Least Squares Regression (LSR)}\label{slsr}


We first recall the following fundamental problem of matrix computations  (cf. \cite{GL13}):

\begin{problem}\label{pr1} {\em Least Squares Solution of an Overdetermined Linear System of Equations.}
Given two integers $m$ and $d$ such that $1\le d<m$,
a matrix $A\in \mathbb R^{m\times d}$, and a vector ${\bf b}\in \mathbb R^{m}$,
compute a vector ${\bf x}$ that minimizes the norm $||A{\bf x}-{\bf b}||$.
\end{problem}

If a matrix $A$ has full rank $n$, then unique solution 
is given by the vector ${\bf x}=(A^TA)^{-1}A^T{\bf b}$,
satisfying the linear system of normal equations $A^TA{\bf x}=A^T{\bf b}$.
Otherwise solution is not unique, and a solution  ${\bf x}$ having the
minimum norm is given by the vector $A^+{\bf b}$. 
In the important case where $m\gg d$ and an approximate solution
is acceptable,  Sarl\'os in \cite{S06} proposed  to
  simplify the computations as follows:

 
\begin{algorithm}\label{algapprls} 
{\rm  Least Squares Regression (LSR).}

 
\begin{description}





\item[{\sc Initialization:}] 
 Fix an 
 integer $k$ such that $1\le k\ll m$. 


\item[{\sc Computations:}]

\begin{enumerate}
\item 
Generate a scaled $k\times m$ Gaussian matrix $F$.

\item 
Compute the matrix $FA$ and the vector $F{\bf b}$.
\item 
Output
a solution $\tilde {\bf x}$ to the compressed Problem \ref{pr1}
where the matrix $A$ and the vector ${\bf b}$
are replaced by the matrix 
$FA$ and the vector $F{\bf b}$,
respectively.
\end{enumerate}


\end{description}


\end{algorithm}


Now write $M=(A~|~{\bf b})$ and ${\bf y}=\begin{pmatrix}
{\bf x} \\ -1
\end{pmatrix}$
and compare the error norms 
$||FM\tilde{\bf y}||=||FA\tilde{\bf x}-F{\bf b}||$ (of the output $\tilde{\bf x}$ 
of the latter algorithm) and $||M{\bf y}||=||A{\bf x}-{\bf b}||$
(of the solution ${\bf x}$ of the original Problem \ref{pr1}).

 
\begin{theorem}\label{thlsrd}  {\em  \cite[Theorem 2.3]{W14}.}
Suppose that we are given 
two tolerance values $\delta$ and  $\xi$, $0<\delta<1$ and  $0<\xi<1$, 
three integers $k$, $m$ and  $d$ such that $1\le d<m$
and $$k=(d+\log(1/\delta)\xi^{-2})\theta,$$ for a certain constant $\theta$, and
a matrix $G_{k,m}\in \mathcal G^{k\times m}$. 
Then, with a probability at least $1-\delta$, it holds that 
$$(1-\xi)||M{\bf y}||\le \frac{1}{\sqrt k}||G_{k,m}M{\bf y}||\le (1+\xi)||M{\bf y}||$$
for all matrices $M\in \mathbb R^{m\times (d+1)}$
and all vectors ${\bf y}=(y_i)_{i=0}^{d}\in \mathbb R^{d+1}$ normalized so that $y_d=-1$.
\end{theorem}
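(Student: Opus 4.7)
The plan is to recognize Theorem \ref{thlsrd} as a standard \emph{oblivious subspace embedding} result for Gaussian sketches: the vector $M\mathbf{y}$ always lies in the column space $V=\mathcal R(M)$ of $M$, which has dimension at most $d+1$, so it suffices to show that, with probability at least $1-\delta$ over the draw of $G=G_{k,m}$, the scaled map $\tfrac{1}{\sqrt k}G$ preserves every vector in this fixed $(d+1)$-dimensional subspace to within a factor $1\pm\xi$. Once that is established, both inequalities follow immediately by applying it to the unit vector $M\mathbf{y}/\|M\mathbf{y}\|$ (the case $M\mathbf{y}=\mathbf 0$ being trivial), uniformly in $\mathbf{y}$.

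For the subspace-embedding step, I would proceed in three stages. First, I would reduce the claim to unit vectors $v$ in $V$: by homogeneity, $\tfrac{1}{\sqrt k}\|Gv\|\in[1-\xi,1+\xi]$ for all unit $v\in V$ implies both inequalities of the theorem for every $M\mathbf{y}$. Second, for a single fixed unit $v\in V$, the vector $Gv$ is a standard Gaussian in $\mathbb R^k$, so $\|Gv\|^2$ is chi-squared with $k$ degrees of freedom; a Laurent--Massart style tail bound gives
$$
\Pr\!\left(\left|\tfrac{1}{k}\|Gv\|^2-1\right|>\xi\right)\le 2\exp(-c k\xi^2)
$$
for an absolute constant $c>0$. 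Third, I would pass from one vector to the whole subspace via a covering argument: choose an $\eta$-net $\mathcal N$ of the unit sphere of $V$ with $|\mathcal N|\le(3/\eta)^{d+1}$ (Lemma on volumetric covers), apply the Gaussian tail bound to every point of $\mathcal N$, and take a union bound.

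The main obstacle is the extension from the net to the full unit sphere of $V$. The standard trick is to use a ``successive approximation'' argument: for any unit $v\in V$ one writes $v=v_0+\sum_{j\ge 1}\eta^j w_j$ with each $v_0,w_j\in\mathcal N$, whence $\tfrac{1}{\sqrt k}\|Gv\|$ is controlled in terms of the net values, giving an effective inflation of the distortion by a factor $1/(1-\eta)$. Choosing $\eta$ to be a small absolute constant (say $1/4$) preserves the form of the bound up to adjusting $c$; to absorb the net union-bound factor I would require $2(3/\eta)^{d+1}\exp(-c k\xi^2)\le\delta$, which rearranges to the hypothesis $k\ge\theta\bigl(d+\log(1/\delta)\bigr)/\xi^2$ for a sufficiently large absolute constant $\theta$ (depending on $c$ and $\eta$).

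To finish, I would verify that the resulting bound holds simultaneously for \emph{all} $\mathbf{y}$ with $y_d=-1$: since that family of vectors $M\mathbf{y}$ still lies in $V$, the single high-probability event produced above covers them all at once. This proves the quoted two-sided estimate and completes the proof, with no additional hypothesis on $M$ beyond its size. The only genuinely delicate step is the net-to-sphere extension and the accompanying choice of $\theta$ large enough that the union bound is absorbed; everything else is bookkeeping with well-known Gaussian concentration inequalities.
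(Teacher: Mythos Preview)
The paper does not supply its own proof of Theorem~\ref{thlsrd}; it is quoted from \cite[Theorem~2.3]{W14}, and the only further remark (inside the proof of Theorem~\ref{thlsrdd}) is that \cite[Section~2]{W14} establishes it in the situation where $\sqrt{k}\,FM$ is a $k\times(d+1)$ Gaussian matrix. Your sketch---reduce to the column space $V=\mathcal R(M)$, use chi-squared concentration for $\|Gv\|^2$ on a single unit vector, cover the unit sphere of $V$ by an $\eta$-net, and take a union bound---is exactly the standard derivation of Gaussian oblivious subspace embeddings and is correct.

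Two small comments. First, the quantifier ``for all matrices $M\in\mathbb R^{m\times(d+1)}$'' in the stated theorem has to be read as ``for each fixed $M$'' (the high-probability event is allowed to depend on $M$), not as a simultaneous guarantee over all $M$: a sketch with $k<m$ rows cannot preserve norms on every $(d+1)$-dimensional subspace of $\mathbb R^m$ at once, since their union is all of $\mathbb R^m$. You implicitly adopt this reading when you fix $V=\mathcal R(M)$; it is consistent with the oblivious-subspace-embedding formulation in \cite{W14} and with how the paper uses the result. Second, the bound your net argument actually produces is $k\ge \theta\bigl(d+\log(1/\delta)\bigr)/\xi^{2}$; the expression $k=(d+\log(1/\delta)\xi^{-2})\theta$ printed in the paper should be read the same way.
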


 The
theorem implies that with a probability at least $1-\delta$,
Algorithm \ref{algapprls} outputs an approximate solution to Problem \ref{pr1}
within the error norm bound $\xi$
provided that $k=(d+\log(1/\delta)\xi^{-2})\theta$
and $F=\frac{1}{\sqrt k}G_{k,m}$.\footnote{Such approximate solutions 
serve as preprocessors for practical implementation of
numerical linear algebra algorithms 
for Problem \ref{pr1} of least squares computation \cite[Section 4.5]{M11}, \cite{RT08}, \cite{AMT10}.}

 For $m\gg k$, the computational cost of performing the algorithm for approximate 
solution  
dramatically decreases versus the cost of computing exact solution,
but can still be prohibitively high at the stage of computing the matrix product
$FM$. 
In a number of papers the former cost has been dramatically 
decreased further by means of replacing a multiplier 
$F=\frac{1}{\sqrt k}G_{k,m}$ with various random sparse and structured matrices
(see \cite[Section 2.1]{W14}), for which the bound of Theorem \ref{thlsrd}
still holds for all matrices
$M\in \mathbb R^{m\times (d+1)}$, although
at the expense of increasing significantly the dimension $k$.  

Can we achieve similar progress without such an increase?
 The following theorem provides positive answer
 in the case where $M$ is  the average matrix in $\mathbb R^{m\times (d+1)}$
under the Gaussian probability distribution:

 
\begin{theorem}\label{thlsrdd} {\em Dual LSR.}
The bound 
of Theorem \ref{thlsrd} holds with a probability 
at least $1-\delta$ where $\sqrt k~ M\in \mathcal G^{m\times (d+1)}$
and $F\in \mathbb R^{k\times m}$ is an orthogonal matrix.
\end{theorem}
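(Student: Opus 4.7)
The plan is to reduce the dual statement to the primal one using the rotational invariance of the Gaussian distribution. First I would set $G := \sqrt{k}\,M \in \mathcal G^{m\times(d+1)}$ and extend the fixed $k \times m$ orthogonal matrix $F$ (so $FF^T = I_k$) to a full $m \times m$ orthogonal matrix $U$ by appending $m-k$ orthonormal rows. By Lemma \ref{lepr3}, $UG$ has the same distribution as $G_{m,d+1}$, and $FM$ is exactly the top $k$ rows of the rescaled matrix $\frac{1}{\sqrt k}UG$. Thus $FM{\bf y}$ and $M{\bf y}$ can be read off from a single Gaussian vector as, respectively, its first $k$ coordinates (up to an orthogonal change of basis) and its full-length vector; this splitting replaces the randomness of the pair $(F,M)$ with the intrinsic randomness of a single Gaussian vector and is the sole place where the hypothesis on $F$ is used.

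Next I would carry out the norm concentration pointwise in ${\bf y}$. For fixed ${\bf y}$, the vector $M{\bf y}$ is distributed as $\mathcal N({\bf 0},(\|{\bf y}\|^2/k)\,I_m)$, so $\|M{\bf y}\|^2$ is $\|{\bf y}\|^2/k$ times a $\chi^2_m$ variable, while $\|FM{\bf y}\|^2$ is $\|{\bf y}\|^2/k$ times an \emph{independent} $\chi^2_k$ variable, the independence coming from the orthogonal decomposition $\|M{\bf y}\|^2 = \|FM{\bf y}\|^2 + \|(I - F^TF)M{\bf y}\|^2$. Laurent--Massart tail bounds then place each of the two norms within a factor of $1\pm\xi$ of its mean with failure probability $O(\exp(-c\xi^2 k))$ for a universal $c>0$. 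Combining these gives the pointwise inequality
$$
(1-\xi)\,\|M{\bf y}\| \;\le\; \sqrt{m/k}\,\|FM{\bf y}\| \;\le\; (1+\xi)\,\|M{\bf y}\|,
$$
which is the bound of Theorem \ref{thlsrd} once the scale factor $\sqrt{m/k}$ is absorbed into $F$ to match the normalization convention of the primal statement (where the $k\times m$ Gaussian matrix is prescaled by $1/\sqrt k$).

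Finally I would upgrade to the uniform-in-${\bf y}$ bound by the same covering argument used in the primal proof. Since the inequality is multiplicatively homogeneous in ${\bf y}$, it suffices to verify it on a $\xi$-net of the unit sphere in $\mathbb R^{d+1}$, whose cardinality is $\exp(O(d))$, and to union-bound over the net. The total failure probability is then at most $\exp(O(d) - c\xi^2 k)$, which is bounded by $\delta$ exactly when $k\ge\theta(d+\log(1/\delta))\xi^{-2}$, matching the constant $\theta$ in the hypothesis. The main obstacle is purely bookkeeping: because the primal theorem normalizes $G_{k,m}$ by $1/\sqrt k$ while the dual's orthogonal $F$ has a different natural scale, one must verify that reabsorbing the factor $\sqrt{m/k}$ into $F$ does not disturb the constants in the Laurent--Massart estimate, so as to land on exactly the same threshold $\theta$. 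Beyond this, rotational invariance plus standard $\chi^2$ concentration does all the structural work and no new probabilistic ideas are required.
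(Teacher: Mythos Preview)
The paper's proof is a two-sentence reduction: since $F$ is orthogonal and $\sqrt{k}\,M$ is Gaussian, Lemma~\ref{lepr3} makes $\sqrt{k}\,FM$ a $k\times(d+1)$ Gaussian matrix, and the paper simply asserts that this distributional fact about $FM$ is all the proof of Theorem~\ref{thlsrd} in \cite{W14} actually uses. You instead rebuild the subspace-embedding argument from the ground up---pointwise $\chi^2$ concentration for $\|My\|$ and $\|FMy\|$ separately, then an $\varepsilon$-net union bound over the unit sphere in $\mathbb R^{d+1}$. This is considerably more work than the paper's black-box citation, but it is self-contained and makes the constants visible.

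Your explicit computation also surfaces a point the paper's compressed argument passes over. The primal proof in \cite{W14}, after reducing to orthonormal $M$, uses $\|My\|=\|y\|$; in the dual setting this identity fails, and in fact $\|My\|$ concentrates around $\sqrt{m/k}\,\|y\|$ while $\|FMy\|$ concentrates around $\|y\|$. Hence the literal inequality $(1-\xi)\|My\|\le\|FMy\|$ cannot hold when $k\ll m$, and the factor $\sqrt{m/k}$ you derived is not mere bookkeeping: it cannot be ``absorbed into $F$'' while keeping $F$ orthogonal as the hypothesis demands. The honest fix is either to rescale $F$ in the statement or to carry the scalar through the inequality; for the LSR application this is harmless, since a global constant does not change the minimizer of $\|FMy\|$, but it does mean the subspace-embedding bound as literally stated needs that factor. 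Apart from this normalization caveat---which you flagged yourself---your concentration-plus-net argument is sound.
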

\begin{proof} 
Theorem \ref{thlsrd} has been proven in \cite[Section 2]{W14} in
the case where  
$\sqrt k ~FM\in \mathcal G^{k\times (d+1)}$.
 This is the case where $\sqrt k ~F\in  \mathcal G^{k\times m}$
and $M$ is an orthogonal $m\times (d+1)$ matrix,
but  is also the case under the assumptions of  
Theorem \ref{thlsrdd}, by virtue of Lemma \ref{lepr3}. 
\end{proof}

Theorem \ref{thlsrdd} supports the computation of
LSR
for any orthogonal multiplier $F$
(e.g., a Hadamard's scaled multiplier or
a CountSketch multiplier from
the data stream literature \cite[Section 2.1]{W14},
\cite{CCF04}, \cite{TZ12})
and for an input matrix $M\in \mathbb R^{m\times (d+1)}$,
 average under the Gaussian probability distribution.

It follows that
Algorithm \ref{algapprls} can fail only  
for a narrow class of pairs $F$ and $M$ where
$F$ denotes orthogonal matrices in $\mathbb R^{k\times m}$
 and $M$ denotes matrices in $\mathbb R^{m\times (d+1)}$,
and even in the case of failure we can still have good chances to succeed by using 
heuristic recipes of our Section \ref{smngflr}.

\subsection{Acceleration of the Fast Multipole Method (FMM) for the Average HSS matrix}\label{sfmm}


Next  
 we point out how our duality approach can accelerate  
a fundamental application of the FMM  to 
multiplication by a vector of  HSS matrix \footnote{Here and hereafter ``{\em HSS}" stands for ``hierarchically semiseparable".} defined by its average case generators. This preliminary sketch
(together with its further extension to 
the CG algorithms in the next subsection) should 
demonstrate new chances for the expansion of 
the large application area of low-rank approximation.

We first recall that
 HSS matrices
 naturally extend the class of banded matrices and their inverses, 
are closely linked to FMM,  
have been intensively studied for decades
(cf.   
 \cite{CGR88}, \cite{GR87}, 
 \cite{T00}, 
 \cite{BGH03},  \cite{GH03},
 \cite{VVGM05}, 
\cite{VVM07/08}, 
 \cite{B10}, \cite{X12},   \cite{XXG12},
\cite{EGH13}, \cite{X13},
 \cite{XXCB14},
  and the bibliography therein), and 
are highly and increasingly popular. 

\begin{definition}\label{defneut} (Cf. 
\cite{MRT05}.) With each diagonal block of a block matrix 
associate 
its complement in its block column,
that is,
the union of the pair of the maximal sub- and super-diagonal blocks
in that column block, and call this complement a {\em neutered block column}.
\end{definition}

\begin{definition}\label{defqs} (Cf. 
 \cite{X12},  \cite{X13}, \cite{XXCB14}.)

A block 
matrix $M$ of size  $m\times n$ is 
called a $r$-{\em HSS matrix}, for a positive integer $r$, 

(i) if all diagonal blocks of this matrix
consist of $O((m+n)r)$ entries overall
and
 
(ii) if $r$ is the maximum rank of its neutered blocks. 
\end{definition}

\begin{remark}\label{reqs}
Many authors work with $(l,u)$-HSS
rather than $r$-HSS matrices $M$ where $l$ and $u$
are the maximum ranks of the sub- and super-block-diagonal blocks,
respectively.
The $(l,u)$-HSS and $r$-HSS matrices are closely related. 
If a neutered block column $N$
is the union of a sub-block-diagonal block $B_-$ and 
a super-block-diagonal block $B_+$,
then
 $\rank (N)\le \rank (B_-)+\rank (B_+)$,
 and so
an $(l,u)$-HSS matrix is a $r$-HSS matrix,
for $r\le l+u$,
while clearly a $r$-HSS matrix is  
a $(r,r)$-HSS matrix.
\end{remark}

The FMM exploits the $r$-HSS structure of a matrix as s
(cf.  \cite{VVM07/08}, \cite{B10}, \cite{EGH13}):

(i) Cover all off-block-diagonal entries
with a set of  non-overlapping neutered block columns.  

(ii) Express every neutered block column $N$ of this set
  as the product
  $FH$ of two  {\em generator
matrices}, $F$ of size $h\times r$
and $H$ of size $r\times k$. Call the 
pair $\{F,H\}$ a {\em length $r$ generator} of the 
neutered block column $N$. 

(iii)  Multiply readily
the matrix $M$ by a vector by separately multiplying generators
and diagonal blocks by subvectors, which involves $O((m+n)r)$ flops
overall, and

(iv) in a more advanced application of  
 FMM solve a nonsingular $r$-HSS linear system of $n$
equations  by using
$O(nr\log^2(n))$ flops under some mild additional assumptions on  the input.

This approach is readily extended to the same operations with 
$(r,\xi)$-{\em HSS matrices},
that is, matrices approximated by $r$-HSS matrices
within a perturbation norm bound $\xi$ where a  positive tolerance 
$\xi$ is small in context (e.g., is the unit round-off).
 Likewise, one defines
an  $(r,\xi)$-{\em HSS representation} and 
$(r,\xi)$-{\em generators}.

$(r,\xi)$-HSS matrices (for $r$ small in context)
appear routinely in matrix computations,
and computations with such matrices are 
performed  efficiently by using the
above techniques.


In many applications of the FMM (cf., e.g., \cite{BGP05}, \cite{VVVF10})
stage (ii) is omitted because short generators for all 
neutered block columns are readily available,
 but this is not the case in other important applications
 (cf. \cite{XXG12}, \cite{XXCB14}, \cite{P15}, and our Section \ref{scg}).
 The computation of such generators is precisely the
 low-rank approximation of the neutered block
columns, which turns out to be 
the 
bottleneck stage of FMM in these applications. 

Indeed 
 apply random sampling Algorithm \ref{alg1} at this
 stage with Gaussian multipliers.
Multiplication of a $q\times h$ matrix
by $h\times r$ Gaussian matrix requires $(2h-1)qr$ flops,
while
standard HSS-representation of $n\times n$ 
HSS matrix includes $q\times h$ neutered 
 block columns for $q\approx m/2$ and $h\approx n/2$. In this case 
the cost of computing an $r$-HSS representation of the
matrix $M$ is at least of order $mnr$.
For large integers $m$ and $n$, this
 greatly exceeds 
the above estimate of $O((m+n)r)$ flops at the other stages of 
the computations. 

Alternative customary techniques for low-rank approximation
rely on computing SVD
or rank-revealing factorization of an input matrix 
and are at least as costly as the computations by means of random sampling.

Can we alleviate such a problem? Yes, 
we  can  accelerate 
 randomized computation
of the generators  
for  the average neutered block columns involved into a  desired  $(r,\xi)$-HSS
representation,
by applying 
 our recipes of Section \ref{smngflr}
and  multipliers of Section \ref{ssprsml}.
The papers \cite{XXG12} and \cite{XXCB14} have succeeded
by means of ad hoc application of random Toeplitz multipliers.
Now our Dual Theorem \ref{th1d} backs up that success,
and our recipes of Section \ref{smngflr}
enable further simplification of such computations.

 
\subsection{Acceleration of the Conjugate Gradient (CG) algorithms}\label{scg} 



We recall that a  real $n\times n$ matrix $M$ and a  
linear system of $n$ equations $M{\bf x}={\bf b}$
are said to be  {\em symmetric positive definite}\footnote{Hereafter 
we use the acronym {\em spd}.} if
$M=V^TV$ for a nonsingular matrix $V$
\cite{GL13} 
and use the following concept:
 

\begin{definition}\label{defconc}
An $n\times n$ matrix $M$ is $(r,\xi)$-{\em concentrated} if 
the set 
of its
 singular values is clustered (within a small tolerance $\xi$)
about at most  $r+1$ values. 
Such a matrix is {\em strongly} $(r,\xi)$-{\em concentrated} if this
 set contains a cluster of at least $n-r$  singular values,
each counted with its multiplicity.
\end{definition}

The following two facts imply the efficiency of the CG  method:  

\medskip

(i) Given a spd linear system
of  equations $M{\bf x}={\bf b}$
whose matrix $M$ 
is $(r,\xi)$-concentrated, the CG algorithms converge  to its solution
within a error norm in $O(\xi)$ in at most $r$ 
iterations  \cite{A94}, 
\cite{G97},
\cite{S03}.  

\medskip

(ii) Various highly important 
 present day computations routinely  involve matrices 
made strongly $(r,\xi)$-concentrated and hence 
 $(r,\xi)$-concentrated,
 for reasonably small integers $r$
and small positive $\xi$,
by means of some standard preconditioning techniques.

\medskip

The next critical issue is whether we can decrease 
the computational cost of a CG iteration, which is reduced 
essentially to computing or closely approximating
 the product
of the matrix $M$ by a vector.

Next we 
prove that strongly $(r,\xi)$-concentrated matrices
are also $(r,\xi)$-HSS matrices, and so, by applying our
 accelerated variant of FMM,  we can approximate the product of
 such an $n\times n$ average matrix $M$ by a vector
 significantly faster than
 by applying the known algorithms,
which involve $(2n-1)n$ flops.


\begin{theorem}\label{thnroff}
If an $n\times n$ spd matrix $M$ is strongly 
 $r$-concentrated, then the numerical rank of any of its off-diagonal submatrix is at most $r$.
\end{theorem}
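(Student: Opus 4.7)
My plan is to decode the hypothesis in terms of an additive decomposition of $M$, then observe that off-diagonal entries of the identity vanish, so every off-diagonal submatrix inherits the low numerical rank of the non-scalar part.

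First I would unpack \emph{strongly $(r,\xi)$-concentrated}. Since $M$ is spd, its singular values coincide with its eigenvalues, so the hypothesis gives a cluster $\mathcal{C}$ of at least $n-r$ eigenvalues (counted with multiplicity) all lying within $\xi$ of some common value $s>0$. Let $M = Q \Lambda Q^T$ be the spectral decomposition. Split $\Lambda = sI_n + \widetilde\Lambda$, where $\widetilde\Lambda$ is diagonal with at most $r$ entries of absolute value exceeding $\xi$ (namely the eigenvalues outside the cluster). Conjugating back, I obtain the additive splitting
\begin{equation*}
M = sI_n + F, \qquad F = Q\widetilde\Lambda Q^T,
\end{equation*}
in which $F$ has at most $r$ singular values exceeding $\xi$; equivalently, $\nrank(F)\le r$ by the definition in Section \ref{ssdef}.

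Next I would consider an arbitrary off-diagonal submatrix. In the HSS setting, this means a block $M_{I,J}$ obtained by selecting a row index set $I$ and column index set $J$ with $I \cap J = \varnothing$ (the relevant case for neutered block columns and, more generally, any sub- or super-diagonal block). Since $(sI_n)_{ij} = 0$ whenever $i\ne j$, the scalar part contributes nothing to this submatrix, so
\begin{equation*}
M_{I,J} = F_{I,J}.
\end{equation*}

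Finally I would verify that the property $\nrank(\,\cdot\,)\le r$ is inherited by submatrices. Writing $F = F_1 + F_2$ with $\rank(F_1)\le r$ and $\|F_2\|\le \xi$, the restriction to rows $I$ and columns $J$ yields $F_{I,J} = (F_1)_{I,J} + (F_2)_{I,J}$, where $\rank((F_1)_{I,J})\le \rank(F_1) \le r$ and $\|(F_2)_{I,J}\| \le \|F_2\| \le \xi$ (submatrix norms are bounded by the spectral norm of the ambient matrix). Hence $\nrank(M_{I,J})\le r$, which is the claim. The only subtlety I anticipate is being explicit about the tolerance $\xi$ (the theorem statement suppresses it, so I would state the result uniformly in $\xi$), and clarifying that ``off-diagonal'' here means disjoint row/column index sets, which is precisely what is needed for the HSS/FMM application in Section \ref{sfmm}.
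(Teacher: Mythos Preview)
Your argument is correct and follows essentially the same route as the paper: decompose $M$ as $sI_n$ plus a symmetric matrix of numerical rank at most $r$ (your $F$ is the paper's $-E$), observe that $sI_n$ vanishes on any block with disjoint row and column index sets, and conclude by the monotonicity of numerical rank under passage to submatrices. Your write-up is in fact more explicit than the paper's about the tolerance $\xi$ and about why submatrices inherit the bound $\nrank(\cdot)\le r$.
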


\begin{proof}
Since $M$ is a strongly 
 $r$-concentrated matrix, at least $n-r$ its singular values
 are clustered about a certain value $s$.
Change all the singular
values by assigning to them
this value  $s$  
and denote the resulting matrix $\widehat M=M+E$
where the matrix $E$ has numerical rank at most $r$ 
and where $\widehat M=U\widehat SU^T$, for
$\widehat S =s I_n$ and an orthogonal matrix $U$.
Note that in this case $\widehat M=sI_n$, and so every
off-diagonal submatrix of $\widehat M$ is filled with 0s.
Therefore the matrices $M$ and $E$ share
all their off-diagonal submatrices.
Consequently numerical rank of such a submatrix cannot exceed 
  $\nrank(E)\le r$.
\end{proof}

\begin{corollary}\label{cofmm}
If an $n\times n$ spd matrix $M$ is strongly 
 $r$-concentrated, then 
 we can  approximate the solution 
of a linear system of $n$ equations $M{\bf x}={\bf b}$
by using 
$O(r^2n\log(n))+\gamma(M)$ flops
provided that one can compute generators 
of length at most $r$ for a $r$-HSS approximate representation
of the matrix  $M$ by using $\gamma(M)$ flops.
\end{corollary}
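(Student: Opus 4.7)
The plan is to combine Theorem \ref{thnroff} with the classical convergence of the CG method on clustered spectra and the FMM evaluation of HSS matrix-vector products described in Section \ref{sfmm}.

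First, I would invoke Theorem \ref{thnroff}: because $M$ is strongly $r$-concentrated, every off-diagonal submatrix of $M$ has numerical rank at most $r$, so $M$ qualifies as an $(r,\xi)$-HSS matrix in the sense of Definition \ref{defqs}. By the hypothesis of the corollary, $(r,\xi)$-generators for such a representation can be produced in $\gamma(M)$ flops; this accounts for the one-time preprocessing cost that appears as the additive $\gamma(M)$ term in the final bound.

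Next, I would run CG on the spd system $M{\bf x}={\bf b}$. Fact (i) recalled just above the corollary guarantees that an $(r,\xi)$-concentrated spd matrix forces CG to reach an approximate solution with error norm $O(\xi)$ within at most $r$ iterations. Each CG iteration is dominated by one matrix-vector product ${\bf v}\mapsto M{\bf v}$ together with $O(n)$ auxiliary flops for inner products and axpy updates; with the $r$-HSS generators in hand, the FMM procedure of Section \ref{sfmm} performs this matrix-vector product in $O(rn\log n)$ flops, where the $\log n$ factor reflects the traversal of the HSS block-partition tree of depth $O(\log n)$. Summing over the $r$ CG iterations yields $O(r^2 n\log n)$ flops, and adding the setup cost $\gamma(M)$ gives the bound claimed.

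The main obstacle I expect is error accounting rather than flop counting. Fact (i) is stated cleanly only for the exact matrix $M$, whereas each FMM evaluation returns some $\widetilde M{\bf v}$ within an $O(\xi)$ tolerance of $M{\bf v}$, and strong $r$-concentration only clusters the spectrum of $M$ up to the same tolerance $\xi$. One therefore has to verify that $r$ successive approximate products do not destroy the finite-termination behavior of CG on a clustered spectrum, and that $\widetilde M$ remains sufficiently close to spd for the Krylov recurrence to be well-defined at every step. Once this perturbation analysis is made rigorous, along the lines standard in the CG literature for clustered spectra, the claimed flop bound follows immediately from the three ingredients above.
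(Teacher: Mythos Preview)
Your proposal is correct and follows essentially the same route as the paper: invoke Theorem \ref{thnroff} to conclude that $M$ is an $(r,\xi)$-HSS matrix, build the generators at cost $\gamma(M)$, and then run the at most $r$ CG iterations guaranteed by fact (i), each dominated by an FMM-based matrix--vector product. The paper's own justification is even terser than yours and does not address the perturbation issue you flag; note only that item (iii) of Section \ref{sfmm} actually quotes $O((m+n)r)$ flops per HSS mat--vec rather than $O(rn\log n)$, so the $\log n$ factor in the corollary is slack rather than something you need to attribute to the tree traversal.
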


By virtue of Theorem \ref{thnroff}  the neutered block columns 
in the $r$-HSS representation of the matrix $M$
have numerical ranks at most $r$.
By virtue of our results of the 
previous subsection,
$\gamma(M)=O(rn)$ in the case of the average matrix $M$, 
 implying respective acceleration of the  CG algorithms.

\begin{remark}\label{refmm1} {\rm Extension to nonsymmetric inputs.}
Recall that any nonsingular linear system
$V{\bf x}={\bf b}$ is equivalent to the spd linear systems
$V^TV{\bf x}={\bf c}$ and $VV^T{\bf y}={\bf b}$ for
${\bf c}=V^T{\bf b}$ and ${\bf x}=V^T{\bf y}$.
Therefore we can extend our results to a 
nonsymmetric nonsingular linear system
of equations $A{\bf y}={\bf f}$ by means of
symmetrization of a matrix $A$ in any of the two ways,
$A\rightarrow M=A^TA$ or $A\rightarrow M=AA^T$, and then application of 
the CG algorithms to the matrix $M$
defined implicitly by the above products,
and never computed explicitly.
This leads to
the {\em CG normal equation error} method and the
{\em CG normal equation error} method, respectively
 (cf. \cite[Section 11.3.9]{GL13}),
to which we can extend our study of the CG method.
\end{remark}

\medskip

\medskip


{\bf {\LARGE {Appendix}: Definitions and Auxiliary Results}}
\appendix 

\section{General Matrices}\label{sdef}


For simplicity, in the Appendix  we mostly assume dealing with real matrices,
but  can  readily extend all our study to the complex case.

\begin{enumerate}
\item
$I_g$ is a $g\times g$ identity matrix.
$O_{k,l}$ is the  $k\times l$ matrix filled with zeros.
\item
  $(B_1~|~B_2~|~\cdots~|~B_k)$ is a block vector of length $k$, 
and
$\diag(B_1,B_2,\dots,B_k)$ is a $k\times k$ block
diagonal matrix, in both cases with blocks $B_1,B_2,\dots,B_k$.

\item
 $W_{k,l}$ denotes the
$k\times l$ leading (that is, northwestern) block
of an $m\times n$ matrix $W$.
\item
$W=S_{W,\rho}\Sigma_{W,\rho}T^T_{W,\rho}$ is 
 {\em  compact SVD}
of a matrix $W$ of rank
$\rho$ with
 $S_{W,\rho}$ and $T_{W,\rho}$   real orthogonal or  unitary
matrices of its singular vectors and
$\Sigma_{W,\rho}=\diag(\sigma_j(W))_{j=1}^{\rho}$ the
diagonal matrix of its singular values
in non-increasing order,
$\sigma_1(W)\ge \sigma_2(W)\ge \dots\ge 
\sigma_{\rho}(W)>0$.
\item
 $W^+=T_{W,\rho}\Sigma_{W,\rho}^{-1}S^T_{W,\rho}$ 
is the  {\em Moore--Penrose pseudo
inverse} of the matrix $W$. 
\item
$||W||=\sigma_1(W)$ and
$||W||_F=(\sum_{j=1}^{\rho}\sigma_j^2(W))^{1/2} 
\le \sqrt n~||W||$ 
denote its  {\em spectral and Frobenius norms}, respectively.
($||W^+||=\frac{1}{\sigma_{\rho}(W)}$; 
$||UW||=||W||$ and $||WU||=||W||$ if the matrix $U$ is unitary.)
\item
 $\kappa(W)=||W||~||W^+||=\sigma_1(W)/\sigma_{\rho}(W)\ge 1$
 denotes the  {\em condition number}
of a matrix $W$. 
A matrix
 is called
{\em ill-conditioned} if 
its condition number 
 is large in context and
is called {\em well-conditioned}
if  this number
$\kappa(W)$ is reasonably bounded.
(An $m\times n$ matrix is ill-conditioned
if and only if it has a matrix of a smaller rank nearby, 
and it is well-conditioned 
if and only if it has full numerical rank $\min\{m,n\}$.)
\item
The following theorem is implied by \cite[Corollary 1.4.19]{S98} for $P= -C^{-1}E$:
\begin{theorem}\label{thpert} 
Suppose $C$ and $C+E$ are two nonsingular matrices of the same size
and $$||C^{-1}E||=\theta<1.$$ Then
$$\||(C+E)^{-1}-C^{-1}||\le \frac{\theta}{1-\theta}||C^{-1}||;$$
e.g., $\||(C+E)^{-1}-C^{-1}||\le 0.5||C^{-1}||$
if $\theta\le 1/3$.
\end{theorem}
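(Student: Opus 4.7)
The plan is to reduce the estimate to the standard Neumann series bound for $(I-P)^{-1}$ with $P = -C^{-1}E$, exactly as the hint in the excerpt suggests.

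First I would factor out $C$ on the left to isolate the perturbation: write $C+E = C(I + C^{-1}E) = C(I - P)$, where $P := -C^{-1}E$, so by hypothesis $\|P\| = \theta < 1$. Since $C$ and $C+E$ are both nonsingular, this identity yields
\begin{equation*}
(C+E)^{-1} = (I-P)^{-1} C^{-1}, \qquad (C+E)^{-1} - C^{-1} = \bigl[(I-P)^{-1} - I\bigr]\, C^{-1}.
\end{equation*}
Taking spectral norms and using submultiplicativity,
\begin{equation*}
\|(C+E)^{-1} - C^{-1}\| \;\le\; \|(I-P)^{-1} - I\| \cdot \|C^{-1}\|,
\end{equation*}
so the entire problem reduces to bounding $\|(I-P)^{-1} - I\|$ when $\|P\|=\theta<1$.

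Second, I would invoke the Neumann series. Because $\|P\|<1$, the series $\sum_{k\ge 0} P^k$ converges in spectral norm to $(I-P)^{-1}$, and consequently
\begin{equation*}
(I-P)^{-1} - I \;=\; \sum_{k=1}^{\infty} P^k \;=\; P(I-P)^{-1}.
\end{equation*}
Taking norms,
\begin{equation*}
\|(I-P)^{-1} - I\| \;\le\; \|P\|\cdot \|(I-P)^{-1}\| \;\le\; \theta \cdot \sum_{k=0}^{\infty}\theta^k \;=\; \frac{\theta}{1-\theta}.
\end{equation*}
Combining this with the bound from the first step gives the stated inequality, and the concluding numerical remark ($\|\cdot\|\le 0.5\|C^{-1}\|$ when $\theta\le 1/3$) follows by plugging $\theta = 1/3$ into $\theta/(1-\theta)$.

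There is really no hard step here: this is the textbook Banach-lemma argument, and the only thing worth double-checking is that the Neumann series actually converges in the operator/spectral norm, which is immediate since $\|P^k\| \le \|P\|^k = \theta^k$ and the geometric series of $\theta^k$ converges. The only modest care needed is to keep the factors in the correct left/right order when multiplying by $C^{-1}$, since $C$ and $E$ need not commute; factoring $C$ out on the left (rather than the right) is the choice that matches the form of the conclusion.
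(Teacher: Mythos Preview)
Your argument is correct and matches the paper's approach exactly: the paper does not spell out a proof but simply cites \cite[Corollary 1.4.19]{S98} with the substitution $P=-C^{-1}E$, which is precisely the Neumann-series/Banach-lemma computation you carried out.
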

\end{enumerate}


\section{Gaussian Matrices}\label{srndm}


\begin{theorem}\label{thrnd} 
Suppose that $A$ is an  
$m\times n$ matrix of full rank $k=\min\{m,n\}$, 
$F$ and $H$ are $r\times m$ and 
$n\times r$  matrices, respectively, for $r\le k$,
and  the  entries of these two matrices are nonconstant linear combinations
of finitely many i.i.d. random variables $v_1,\dots,v_h$. 

Then 
the matrices $F$, $FA$, $H$, and $AH$  
have full rank $r$ 

(i) with probability 1
if $v_1,\dots,v_h$ are Gaussian variables
and 

(ii) with a probability at least $1-r/|\mathcal S|$
if they are random variables sampled under the
uniform probability distribution from 
a finite set $\mathcal S$ having cardinality 
$|\mathcal S|$.  
\end{theorem}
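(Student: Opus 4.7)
The plan is to reduce the full-rank statement to the non-vanishing of a polynomial in the underlying random variables, and then apply the two standard probabilistic estimates: absolute continuity of the Gaussian measure for part (i), and the Schwartz--Zippel lemma for part (ii).

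First, I would observe that each entry of $F$ and $H$ is by hypothesis a linear form in $v_1,\dots,v_h$, so every entry of each of $F$, $H$, $FA$, $AH$ is itself a linear form in $v_1,\dots,v_h$ (the multiplication by the fixed matrix $A$ only recombines these linear forms). For any matrix $W$ of size $r\times s$ with $r\le s$, the condition $\rank(W)=r$ is equivalent to at least one of the $r\times r$ minors of $W$ being nonzero. Each such minor is a polynomial of total degree at most $r$ in the entries of $W$, and therefore a polynomial $p(v_1,\dots,v_h)$ of total degree at most $r$ in the underlying variables.

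The key step is to exhibit, for each of the four matrices, one $r\times r$ minor whose associated polynomial $p$ is not identically zero. For $F$ and $H$ this is immediate: the assumption that the entries are nonconstant linear combinations lets us choose values of $v_1,\dots,v_h$ realizing any prescribed configuration that makes the relevant minor nonzero (in the ``generic'' instances of interest in the paper, e.g., when distinct entries are distinct $v_i$, one simply picks values to form a leading $r\times r$ identity block). For $FA$ and $AH$ I would use the full rank of $A$: since $k=\min\{m,n\}\ge r$, the map $X\mapsto XA$ (respectively $X\mapsto AX$) preserves rank on a Zariski-dense set of $r\times m$ (respectively $n\times r$) matrices. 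Concretely, if $m\le n$ then $A$ is right-invertible, so $\rank(FA)=\rank(F)$, while if $m>n$ one picks $F$ whose row space is transverse to the $(m-n)$-dimensional $\ker(A^T)$, which is possible because $r+(m-n)\le m$. The symmetric argument handles $AH$. This shows that the minor-polynomial $p$ is a nonzero polynomial of degree at most $r$ in $v_1,\dots,v_h$.

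Having reduced to the non-vanishing of such a $p$, parts (i) and (ii) follow from standard tools. For (i), the zero set $\{v\in\mathbb{R}^h:p(v)=0\}$ of a nonzero polynomial is a proper real algebraic variety, hence of Lebesgue measure zero; since the joint Gaussian law is absolutely continuous with respect to Lebesgue measure, $\prob[p(v)=0]=0$. For (ii), the Schwartz--Zippel lemma gives $\prob[p(v)=0]\le \deg(p)/|\mathcal{S}|\le r/|\mathcal{S}|$ when $v_1,\dots,v_h$ are drawn i.i.d.\ uniformly from $\mathcal{S}$.

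I expect the main obstacle to be Step 2, namely rigorously exhibiting the nonzero minor, because the hypothesis ``nonconstant linear combinations'' is weak and does not by itself preclude pathological configurations in which the rank of $F$ (or $H$) is structurally less than $r$. The cleanest way around this is to read the hypothesis as asserting that the linear forms defining the entries are sufficiently independent for the matrix to attain rank $r$ at some evaluation point --- a property satisfied in every application in the paper (Gaussian matrices, SRHT/SRFT, the sparse and structured families of Section~\ref{ssprsml}, etc.) --- and to verify this in each case of interest. The transfer from ``$F$ can have rank $r$'' to ``$FA$ can have rank $r$'' is then the rank-preservation argument sketched above, which is routine given that $\rank(A)=\min\{m,n\}\ge r$.
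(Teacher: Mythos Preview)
Your approach is essentially the same as the paper's: reduce full rank to non-vanishing of an $r\times r$ minor, observe this is a degree-$r$ polynomial in $v_1,\dots,v_h$, then invoke absolute continuity of the Gaussian measure with respect to Lebesgue measure for part (i) and the Schwartz--Zippel/DeMillo--Lipton lemma for part (ii). The paper's proof is terser than yours---it does not spell out the verification that the minor polynomial is not identically zero (the issue you flag in your final paragraph), instead deferring to a citation of \cite{BV88}---so your discussion of that step is actually more careful than what appears in the paper.
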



\begin{proof}
The determinant, $\det(B)$,
of any $r\times r$ block $B$ of a matrix 
 $F$, $FA$, $H$, or $AH$
is a polynomial of degree $r$ in the variables  $v_1,\dots,v_h$,
and so the equation $\det(B)=0$ 
  defines an algebraic variety of a lower
dimension in the linear space of these variables
(cf. \cite[Proposition 1]{BV88}). 
Clearly, such a variety has Lebesgue  and
Gaussian measures 0, both being absolutely continuous
with respect to one another. This implies part (i) of the theorem.
Derivation of part (ii) from a celebrated lemma of \cite{DL78},
 also known from \cite{Z79} and \cite{S80},  
is a well-known pattern, specified in some detail
 in \cite{PW08}.
\end{proof}


\begin{lemma}\label{lepr3} ({\rm Rotational invariance of a Gaussian matrix.})
Suppose that $k$, $m$, and $n$  are three  positive integers,
$G$ is an 
 $m\times n$  Gaussian matrix, and
$S$ and $T$ are $k\times m$ and 
$n\times k$
 orthogonal matrices, respectively.

Then $SG$ and $GT$ are Gaussian matrices.
\end{lemma}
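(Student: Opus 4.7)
My plan is to exploit the column-wise (resp.\ row-wise) independence structure of a Gaussian matrix together with the standard fact that a linear image of a standard multivariate normal by a matrix with orthonormal rows (resp.\ columns) is again a standard multivariate normal of the appropriate dimension. Throughout, I interpret ``$S$ is $k\times m$ orthogonal'' as $SS^T=I_k$ (necessarily $k\le m$) and ``$T$ is $n\times k$ orthogonal'' as $T^TT=I_k$ (necessarily $k\le n$), consistently with the paper's definition of a unitary matrix.

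First I would treat $SG$. Write $G=(g_1\mid\cdots\mid g_n)$ in terms of its columns. Because the entries of $G$ are i.i.d.\ standard Gaussian, the columns $g_1,\dots,g_n$ are mutually independent, each distributed as $N(0,I_m)$ on $\mathbb{R}^m$. The $j$th column of $SG$ is $Sg_j$, and since $S$ is deterministic with $SS^T=I_k$, a standard change-of-variables computation gives $Sg_j\sim N(0,SS^T)=N(0,I_k)$. Independence of the columns of $SG$ is immediate from independence of the columns of $G$. Hence the $nk$ entries of $SG$ are i.i.d.\ standard Gaussians, so $SG\in\mathcal{G}^{k\times n}$.

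Next I would handle $GT$ by the transposed argument. Write $G$ in terms of its rows $g_1^T,\dots,g_m^T$; these are i.i.d.\ $N(0,I_n)$ vectors. Row $i$ of $GT$ is $g_i^TT=(T^Tg_i)^T$, and since $T^TT=I_k$, we have $T^Tg_i\sim N(0,T^TT)=N(0,I_k)$. Independence across $i$ is preserved, so the rows of $GT$ are i.i.d.\ $N(0,I_k)$, i.e., $GT\in\mathcal{G}^{m\times k}$.

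There is no real obstacle here: the argument is essentially a one-line consequence of the rotational invariance of the standard multivariate normal, once one is careful that a $k\times m$ ``orthogonal'' matrix is meant in the sense $SS^T=I_k$ (and dually for $T$), so that the Gaussian pushforward has covariance $I_k$ rather than something singular or non-identity. If one wanted to be slicker, one could instead invoke the fact that the distribution of a Gaussian matrix is characterized by its first two moments and verify $\mathbb{E}[SG]=0$, $\mathbb{E}[(SG)_{ij}(SG)_{i'j'}]=\delta_{ii'}\delta_{jj'}$ directly from $SS^T=I_k$ and independence of the entries of $G$; the column/row argument above seems cleaner and more in the spirit of the paper.
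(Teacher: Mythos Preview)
Your argument is correct and is the standard proof of this well-known fact. The paper itself states Lemma~\ref{lepr3} without proof, treating rotational invariance of the standard Gaussian as a known result, so there is no approach to compare against; your column-wise/row-wise reduction to the one-dimensional covariance computation $SS^T=I_k$ (resp.\ $T^TT=I_k$) is exactly what one would expect and is consistent with the paper's convention that an $m\times n$ matrix $M$ is unitary if $M^HM=I_n$ or $MM^H=I_m$.
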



We state the following results and estimates for real matrices,
but similar estimates in the case of complex  
 matrices can be found in \cite{D88}, \cite{E88}, \cite{CD05}, 
and \cite{ES05}:

\begin{definition}\label{defnrm} {\rm Norms of random matrices and
expected value of 
a random variable.}
Write 
$\nu_{m,n}=||G||$,
$\nu_{m,n}^+=||G^+||$,  and
$\nu_{m,n,F}^+=||G^+||_F$,
for  a  Gaussian $m\times n$ matrix  $G$,
and write $\mathbb E(v)$ for the expected value of 
a random variable $v$.
($\nu_{m,n}=\nu_{n,m}$,
$\nu_{m,n}^+=\nu_{n,m}^+$,
and $\nu_{F,m,n}=\nu_{F,n,m}$,
for all pairs of $m$ and $n$.) 
\end{definition}


\begin{theorem}\label{thsignorm}
(Cf. \cite[Theorem II.7]{DS01}.)
Suppose 
that $m$ and $n$ are positive integers,
$h=\max\{m,n\}$, $t\ge 0$.
Then 

(i) {\rm Probability}$\{\nu_{m,n}>t+\sqrt m+\sqrt n\}\le
\exp(-t^2/2)$ and 

(ii) $\mathbb E(\nu_{m,n})< 1+\sqrt m+\sqrt n$.
\end{theorem}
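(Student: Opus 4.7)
The plan is to derive both parts from the Borell--Sudakov--Tsirelson Gaussian concentration inequality combined with a separate a priori bound on $\mathbb{E}(\nu_{m,n})$ obtained via a Gaussian comparison argument. First I would observe that the map $G \mapsto \|G\|$ is $1$-Lipschitz on $\mathbb{R}^{m\times n}$ equipped with the Frobenius norm: for any $m\times n$ matrices $G_1, G_2$, the triangle inequality for the spectral norm together with $\|\cdot\| \le \|\cdot\|_F$ gives $\bigl|\,\|G_1\| - \|G_2\|\,\bigr| \le \|G_1 - G_2\| \le \|G_1 - G_2\|_F$. Viewing the Gaussian matrix $G$ as a standard Gaussian vector in $\mathbb{R}^{mn}$, concentration for $1$-Lipschitz functionals then yields, for every $t \ge 0$,
\[
\mathbb{P}\bigl\{\nu_{m,n} > \mathbb{E}(\nu_{m,n}) + t\bigr\} \le \exp(-t^2/2).
\]

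Next I would prove the expectation bound $\mathbb{E}(\nu_{m,n}) \le \sqrt{m} + \sqrt{n}$ via Gordon's comparison inequality. Consider the two centered Gaussian processes $X_{u,v} = u^T G v$ and $Y_{u,v} = g^T u + h^T v$ indexed by pairs $(u,v)$ in the product of unit spheres $S^{m-1} \times S^{n-1}$, where $g \in \mathbb{R}^m$ and $h \in \mathbb{R}^n$ are independent standard Gaussian vectors. A direct computation of increment variances verifies the one-sided comparison required by Gordon's inequality, so $\mathbb{E}\sup_{u,v} X_{u,v} \le \mathbb{E}\sup_{u,v} Y_{u,v}$. Since $\sup_{u,v} X_{u,v} = \|G\| = \nu_{m,n}$ and $\sup_{u,v} Y_{u,v} = \|g\|_2 + \|h\|_2$, Jensen's inequality applied to each of $\|g\|_2$ and $\|h\|_2$ gives $\mathbb{E}(\nu_{m,n}) \le \sqrt{m} + \sqrt{n}$. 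Inserting this into the concentration bound proves part (i), and part (ii) follows immediately from $\mathbb{E}(\nu_{m,n}) \le \sqrt{m} + \sqrt{n} < 1 + \sqrt{m} + \sqrt{n}$.

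The main obstacle is Gordon's inequality, a nontrivial refinement of Slepian's lemma. A coarser alternative I would keep in reserve is an $\varepsilon$-net argument on $S^{m-1}$ and $S^{n-1}$: a standard net of cardinality at most $(3/\varepsilon)^{m+n}$ combined with the union bound and the chi-squared tail of $\|Gx\|_2$ for fixed unit $x$ gives $\mathbb{E}(\nu_{m,n}) \le C(\sqrt{m} + \sqrt{n})$ for some absolute constant $C$. Together with the Lipschitz concentration inequality and tail-to-expectation integration (using $\int_0^\infty e^{-t^2/2}\,dt = \sqrt{\pi/2}$, which is absorbed by the slack ``$1+$'' in part (ii) after one bootstrapping step), this is enough to recover the stated conclusion, though with worse leading constants than the sharp $\sqrt{m} + \sqrt{n}$ given by Gordon's route. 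The Lipschitz estimate and concentration step itself are routine.
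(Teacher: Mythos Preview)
The paper does not supply its own proof of this theorem; it is stated in the Appendix as a known result with a citation to Davidson--Szarek \cite[Theorem II.7]{DS01}. Your argument is correct and is essentially the standard proof found in that reference: Gaussian concentration for the $1$-Lipschitz functional $G\mapsto\|G\|$ combined with a Gaussian comparison inequality yielding $\mathbb{E}\,\nu_{m,n}\le\sqrt{m}+\sqrt{n}$.

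One small terminological remark: the increment computation you describe in fact verifies $\mathbb{E}(X_{u,v}-X_{u',v'})^2\le\mathbb{E}(Y_{u,v}-Y_{u',v'})^2$ for \emph{all} pairs $(u,v),(u',v')$, so the Sudakov--Fernique (Slepian-type) comparison already suffices; Gordon's minimax refinement is not needed here. Your backup $\varepsilon$-net route is also standard but, as you note, does not recover the sharp threshold $\sqrt{m}+\sqrt{n}$ in part~(i), and the ``bootstrapping step'' you allude to would have to be spelled out carefully to close the gap; for the statement as written, only the comparison argument gives the exact constant.
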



\begin{theorem}\label{thsiguna} 
Let $\Gamma(x)=
\int_0^{\infty}\exp(-t)t^{x-1}dt$
denote the Gamma function and let  $x>0$. 
Then 

(i)  {\rm Probability} $\{\nu_{m,n}^+\ge m/x^2\}<\frac{x^{m-n+1}}{\Gamma(m-n+2)}$
for $m\ge n\ge 2$,

(ii) {\rm Probability} $\{\nu_{n,n}^+\ge x\}\le 2.35 {\sqrt n}/x$ 
for $n\ge 2$,

(iii)  $\mathbb E(\nu^+_{m,n})\le e\sqrt{m}/|m-n|$,
provided that $m\neq n$ and $e=2.71828\dots$.
\end{theorem}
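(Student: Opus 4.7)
}
The plan is to reduce everything to control of the smallest singular value of a Gaussian matrix, since by definition $\nu_{m,n}^+=\|G^+\|=1/\sigma_{\min}(G)$, where $\sigma_{\min}(G)=\sigma_{\min\{m,n\}}(G)$. Parts (i) and (ii) are then tail bounds on $1/\sigma_{\min}(G)$, and part (iii) will follow by integrating the tail estimate of part (i) against $\mathbb{E}(X)=\int_0^{\infty}\mathrm{Prob}\{X\ge t\}\,dt$, valid for non-negative random variables.

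For part (i), with $m\ge n\ge 2$, the strategy is to invoke the classical Edelman--Chen--Dongarra analysis of the density of $\sigma_{\min}(G)$ for a real Gaussian matrix of size $m\times n$. That analysis gives an explicit joint density of singular values (a Wishart-type density for $G^TG$), from which one extracts a closed-form bound on $\mathrm{Prob}\{\sigma_{\min}(G)\le \epsilon\}$ by integrating only the smallest singular value while absorbing the rest into constants. Rewriting $\{\nu_{m,n}^+\ge m/x^2\}=\{\sigma_{\min}(G)\le x^2/\sqrt{m}\cdot\sqrt{m}/x^2\cdot\ldots\}$ in the correct form of Edelman's lemma produces the bound $x^{m-n+1}/\Gamma(m-n+2)$; see \cite{D88,E88,CD05}. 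The key point is that the exponent $m-n+1$ is exactly the number of degrees of freedom gained by the excess rows, which is what makes Gaussian rectangular matrices well-conditioned.

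Part (ii) is the square case, which is qualitatively different because there are no excess rows. Here I would invoke directly Edelman's theorem for $n\times n$ Gaussian matrices: the scaled variable $n\,\sigma_{\min}(G_{n,n})^2$ has a limiting distribution with a density bounded above, and a finite-$n$ bound of the form $\mathrm{Prob}\{\sigma_{\min}(G)\le \epsilon/\sqrt{n}\}\le c\epsilon$ is available with an explicit constant $c\le 2.35$ (see \cite{E88} and also \cite{SST06}). Setting $\epsilon=\sqrt{n}/x$ converts this to the stated bound $2.35\sqrt{n}/x$ on $\mathrm{Prob}\{\nu_{n,n}^+\ge x\}$.

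For part (iii), I would use part (i) with $m\neq n$. Setting $s=m/x^2$, so $x=\sqrt{m/s}$, one gets the tail bound
\[
\mathrm{Prob}\{\nu_{m,n}^+\ge s\}< \frac{(m/s)^{(m-n+1)/2}}{\Gamma(m-n+2)}
\]
for $s>0$. Split $\mathbb{E}(\nu_{m,n}^+)=\int_0^{s_0}\mathrm{Prob}\{\nu_{m,n}^+\ge s\}\,ds+\int_{s_0}^\infty\mathrm{Prob}\{\nu_{m,n}^+\ge s\}\,ds$, using the trivial bound $1$ on the first piece and the Gamma-function bound on the second; choose $s_0$ to balance. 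Since $m-n+1\ge 2$ (using $m\neq n$ and a symmetry argument swapping roles if $m<n$, because $\nu_{m,n}^+=\nu_{n,m}^+$), the tail integral converges; applying Stirling to $\Gamma(m-n+2)$ and optimizing produces the $e\sqrt{m}/|m-n|$ constant.

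The main obstacle is part (iii): one must carefully track constants through the Gamma-function estimate to land on the clean $e/|m-n|$ coefficient, and one has to handle the case $|m-n|=1$ separately, since then the tail-integral convergence is marginal and requires using the case $|m-n|\ge 2$ after a symmetry/padding argument. Parts (i) and (ii) are essentially applications of results already in the literature and should not present difficulty beyond translating to the notation $\nu_{m,n}^+$ used here.
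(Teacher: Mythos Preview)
The paper's own proof consists solely of three citations: part~(i) to \cite[Proof of Lemma~4.1]{CD05}, part~(ii) to \cite[Theorem~3.3]{SST06}, and part~(iii) to \cite[Proposition~10.2]{HMT11}. Your treatment of (i) and (ii) points to the same literature and is therefore essentially the same as the paper's.

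For part~(iii) you go further than the paper, proposing to derive the expectation bound by integrating the tail estimate of part~(i); this is in fact the method \cite{HMT11} uses internally to prove its Proposition~10.2, so the strategy is sound. However, your execution has a genuine gap. With part~(i) as stated here (threshold $m/x^2$), your substitution $s=m/x^2$ correctly yields a tail of order $s^{-(m-n+1)/2}$, and for $|m-n|=1$ this is $s^{-1}$, whose integral over $[s_0,\infty)$ diverges. Your proposed ``symmetry/padding'' escape does not work: the symmetry $\nu_{m,n}^+=\nu_{n,m}^+$ gives nothing new when $|m-n|=1$, and padding a Gaussian matrix with extra Gaussian rows or columns does not bound the original pseudoinverse norm from above in any usable way.

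The resolution is that the Chen--Dongarra tail bound is sharper than what part~(i), taken literally, delivers: one has $\mathrm{Prob}\{\nu_{m,n}^+\ge t\}\le (\sqrt m)^{\,p+1}\,t^{-(p+1)}/\Gamma(p+2)$ with $p=m-n$ (equivalently, the threshold in part~(i) should read $\sqrt m/x$ rather than $m/x^2$). With exponent $-(p+1)$ the tail integral converges for every $p\ge 1$; splitting $\mathbb E(\nu_{m,n}^+)\le E+\int_E^\infty$, optimizing over $E$, and invoking $(p+1)!\ge((p+1)/e)^{p+1}$ delivers exactly $e\sqrt m/p$, with no separate treatment of $p=1$ needed. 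So your route is correct once fed the right tail bound; as written, it fails precisely where you flagged it, and the fix you suggest does not close the gap.
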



\begin{proof}
 See \cite[Proof of Lemma 4.1]{CD05} for part (i), 
\cite[Theorem 3.3]{SST06} for part (ii),  and
\cite[Proposition 10.2]{HMT11} for part (iii).
\end{proof}

 
The  probabilistic upper bounds of Theorem \ref{thsiguna}
on $\nu^+_{m,n}$ are reasonable already  
in the case of square matrices, that is, where $m=n$,
but become much stronger as the difference $|m-n|$ grows large.


Theorems \ref{thsignorm} and \ref{thsiguna}
combined imply that an $m\times n$ Gaussian
matrix is well-conditioned  
unless the integer $m+n$ is large or the integer $|m-n|$ is close to 0.
With some grain of salt
we can still consider  such a matrix  
well-conditioned  
 even
  where the integer $|m-n|$ is small or  vanishes
provided that  the integer $m$ is not large.
Clearly, these properties can be extended immediately to all submatrices.


\medskip


\noindent {\bf Acknowledgements:}
Our research has been supported by NSF Grant CCF 1116736 
and PSC CUNY Award  68862--00 46.



\end{document}